\numberwithin{equation}{section}   
\newtheorem{theorem}{Theorem}[section]
\newtheorem{lemma}[theorem]{Lemma}
\newtheorem{proposition}[theorem]{Proposition}
\newtheorem{claim}[theorem]{Claim}
\theoremstyle{definition}
\newtheorem{remark}[theorem]{Remark}
\newcommand*\bigcdot{\mathpalette\bigcdot@{.5}}
\newcommand*\bigcdot@[2]{\mathbin{\vcenter{\hbox{\scalebox{#2}{$\m@th#1\bullet$}}}}}
\newcommand{\R}{\mathbb{R}}
\newcommand{\C}{\mathbb{C}}
\newcommand{\N}{\mathbb{N}}
\newcommand{\Z}{\mathbb{Z}}
\newcommand{\ellipses}{E}
\def\misgausskd{d \gamma_\infty}
\def\misgaussk{\gamma_\infty}
\DeclareMathOperator{\tr}{tr}             
\newdimen\deltay
\def\Ddot#1#2(#3,#4,#5,#6){\deltay=#6\setbox1=\hbox to0pt{\smash{\dotcnt=1
\kern#3\loop\raise\dotcnt\deltay\hbox to0pt{\hss#2}\kern#5\ifnum\dotcnt<#1
\advance\dotcnt 1\repeat}\hss}\setbox2=\vtop{\box1}\ht2=#4\box2}
\def\Blue{\color{blue}}
\def\author@andify{%
  \nxandlist {\unskip ,\penalty-1 \space\ignorespaces}%
    {\unskip {} \@@and~}%
    {\unskip \penalty-2 \space \@@and~}%
}
\title[Spectral multipliers]
{Spectral multipliers
in a general Gaussian setting}
\author{Valentina Casarino}
\address{DTG, Universit\`a degli Studi di Padova\\ Stradella san Nicola 3 \\I-36100 Vicenza \\ Italy}
\email{valentina.casarino@unipd.it}
\author{Paolo Ciatti}
\address{Dipartimento di Matematica "Tullio Levi Civita", Universit\`a degli Studi di Padova\\Via Trieste, 63, 35131 Padova,  \\ Italy}
\email{paolo.ciatti@unipd.it}
\author{Peter Sj\"ogren}
\address{Mathematical Sciences,  University of Gothenburg and  Mathematical Sciences,
Chalmers University of Technology  \\ SE - 412 96 G\"oteborg, Sweden}
\email{peters@chalmers.se}
\date{\today}
\begin{document}

\begin{abstract}
We investigate a class of spectral multipliers
for
an  Ornstein--Uhlenbeck operator $\mathcal L$ in $\R^n$,
with drift given by a real   matrix $B$ whose eigenvalues have negative real parts.
We
prove  that if $m$ is a function of Laplace transform type defined
in the right half-plane,
then $m(\mathcal L)$
 is of weak type $(1, 1)$ with respect to the invariant measure in $\R^n$.
The proof involves
 many estimates of the relevant integral kernels and also
 a bound for
 the number of zeros of the time derivative of  the Mehler kernel,
  as well as  an enhanced version of the Ornstein--Uhlenbeck maximal operator theorem.
\end{abstract}

\keywords{Spectral multipliers, Ornstein--Uhlenbeck operator,  Laplace transform type functions, weak-type bounds, invariant measure.}

\subjclass[2000]{42B15, 
47D03, 
15A15.   
}
\thanks{The first and second authors are members of the Gruppo Nazionale per l'Analisi Matematica, la Probabilit\`a e le loro Applicazioni (GNAMPA)
of the Istituto Nazionale di Alta Matematica (INdAM) and were partially supported by GNAMPA
(Project 2020 ``Alla frontiera tra l'analisi complessa in pi\`u variabili e l'analisi armonica").
Since this work was mainly carried out during a visit  to Padova by the third author, he would like to thank the
University of Padova for its generosity.}
\maketitle

\section{Introduction}

Given a measure space $(X,\mu)$ and a self-adjoint operator $L$ on $L^2(X,\mu)$, an important issue in harmonic analysis  concerns  the boundedness
of the operator $m(L)$, where $m:\R\to\C$ is a Borel function.  If $E$ denotes a  spectral resolution of $L$ on $\R$, one can define $m(L)$ for many  functions $m$ as
\[
m(L)=
\int_{\R} m(\nu)\, dE(\nu).
\]
Great efforts have been devoted to finding minimal assumptions  on the multiplier
$m$ that will ensure the boundedness
of $m(L)$ on the Lebesgue spaces $L^p(X, \mu)$, both in a strong and in a weak sense,
when $p\neq 2$.

A few years ago, the authors started a program concerning harmonic analysis in   the Ornstein--Uhlenbeck setting.
In this framework,
$(X,\mu)$ is the Euclidean space $\R^n$ equipped with a  Gaussian measure
$d\gamma_\infty$,
known as the invariant measure and defined in Section \ref{preliminaries}.
Further,
 $L$ is replaced by the Ornstein--Uhlenbeck operator $\mathcal L$, defined as
\begin{equation}\label{def-L-OU}
\mathcal L f=- \frac12 \,
\tr
\big(
Q\nabla^2
f\big)-
\langle Bx, \nabla f \rangle
\,,\qquad
{\text{ $f\in
\mathcal S (\R^n)$,}}
\end{equation}
where
$\nabla$ and $\nabla^2$ denote  the gradient  and the Hessian, respectively.
In this formula,
$Q$ and $B$
 are real $n\times n$  matrices;
 $Q$ is symmetric and positive definite, and   the
 eigenvalues of $B$ all have   
 negative real parts.
The space  $L^p(\R^n, d\gamma_\infty)$ will be written simply
  $L^p(\gamma_\infty)$.

Since in general
$\mathcal L$ has no self-adjoint or normal extension to $L^2(\gamma_\infty)$,
one cannot invoke
 spectral theory
to define $m(\mathcal L)$.
Notice that
self-adjointness  and normality may fail  also for  the Ornstein--Uhlenbeck semigroup
$\left(
\mathcal H_t
\right)_{t> 0}$,
generated by $\mathcal L$,
which was first introduced in  \cite{OU}.
The focus in this paper is
 on
 multipliers of Laplace transform type.
 This class of multipliers was introduced some fifty years ago
by E. M. Stein in \cite{Stein-LP},  in the context of    Littlewood--Paley theory
for a sublaplacian on a connected Lie group $G$.

A function  $m$ of a real variable  $\lambda>0$
 is said to be of Laplace  transform type
if
\begin{align}\label{defmlambda}
m(\lambda)
&=\lambda \int_0^{+\infty}
\varphi (t)
e^{-t \lambda } \,dt
= -\int_0^{+\infty}
\varphi (t)\,
\frac{d}{dt}\, e^{-t\lambda }
\,dt, \qquad \lambda >0,
\end{align}
for some $\varphi\in L^\infty (0, +\infty)$.
Observe that such a function $m$ can be extended to an analytic function in the half-plane
$\Re z > 0$.
Thus we pay the price of a rather strong condition on $m$, to  prove, in return,
a multiplier theorem for an operator   $\mathcal L$ which is
not necessarily normal.
 Observe that one obtains as $m(\mathcal L)$ the imaginary powers
$\mathcal L^{i\gamma}$ of   $\mathcal L$, with $ \gamma \in \R\setminus\{0\} $, by choosing $\varphi(t) = \mathrm{const.}\, t^{-i\gamma}$.
Other significant examples of functions of Laplace transform type may be found in \cite{Wr3}.

The exact definition of  $m(\mathcal L)$ for functions $m$ of this type will be given in
Section~\ref{def}. Here we present only a heuristic deduction of the kernel of $m(\mathcal L)$. If we simply replace $\lambda$
by $\mathcal L$ in the last expression in \eqref{defmlambda}, we will get
\begin{align}\label{heur}
m(\mathcal L)
=- \int_0^{+\infty}
\varphi (t)\,
\frac{d}{dt}\, e^{-t\mathcal L }
\,dt.
\end{align}

 Here $e^{-t\mathcal L } = \mathcal H_t$ is the Ornstein-Uhlenbeck semigroup,
whose kernel is the Mehler kernel $K_t(x,u)$ described in Section \ref{preliminaries}. We point out that the term kernel  in this paper  refers to integration with respect to
 $ d\gamma_\infty$, except in one case as explained in Section \ref{local region}.
Thus for each $f\in\mathcal S(\R^n)$ and all $t>0$
\begin{align}\label{def-int-ker}
 \mathcal H_t
f(x) &=
 \int
K_t
(x,u)\,
f(u)\,
 d\gamma_\infty(u)
  \,.
\end{align}
 This makes it plausible that the off-diagonal kernel of
 $m(\mathcal L)$ is
 \begin{equation}\label{Mehler-kernel-derivative-mult}
\mathcal M_{\varphi} (x,u)
=
 -\int_0^{+\infty}
\varphi (t)\,
\partial_t K_t (x,u)
\,dt.
\end{equation}
We will verify this formula later, though after splitting the integral and under some restrictions.
It will lead to an expression for  the kernel   in terms of
$Q$ and $B$.

From now on, we assume that $m$  is  of Laplace  transform type.

In the
 standard
 case $Q=I$ and $B=-I$, the operator  $\mathcal L$ is self-adjoint, and
the $L^p(d\gamma_\infty)$ boundedness of $m(\mathcal L)$ follows for all $1<p<\infty$
from a general result due to Stein \cite[Ch. 4]{Stein-LP}.
Moreover,
 J. Garc\'ia-Cuerva, G. Mauceri,  J. L.Torrea and the third author  proved in this
  case the weak type $(1,1)$ of $m(\mathcal L)$  with respect to  $d\gamma_\infty$\hspace{0.03cm}; see  \cite[Theorem 3.8]{GCMST}.
  For more recent results
 in the   standard case,
also involving the Gaussian conical square function, we refer to \cite{K1,K2}; see also \cite{Wr1, Wr2}, where the author investigates  multiplier theorems for systems of
Ornstein--Uhlenbeck operators. Overviews of this topic can be found in Urbina's monograph \cite[Chapter 6]{Urbina},   Bogachev's survey \cite{Bogachev} and the references therein.

 In the general case, when
 $\mathcal L$ is
 given by \eqref{def-L-OU},
  the strong $L^p(\gamma_\infty)$ boundedness
 of $m(\mathcal L)$ follows for $1<p<\infty$  from \cite[Prop. 3.8]{Carbonaro-Oliver}.
In the present paper, we  consider the endpoint  case $p=1$, where the strong boundedness does not hold.

Our main result is the following.
\begin{theorem}\label{weaktype1}
If the function $m$ is of Laplace transform type, then the multiplier operator
$m(\mathcal L)$
associated to a general Ornstein--Uhlenbeck operator $\mathcal L$
  is of weak type $(1, 1)$ with respect to the invariant measure $d\gamma_\infty$.
 \end{theorem}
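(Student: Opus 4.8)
The plan is to split the kernel $\mathcal M_\varphi(x,u) = -\int_0^\infty \varphi(t)\,\partial_t K_t(x,u)\,dt$ into a local part and a global (off-diagonal) part, via a smooth partition of unity adapted to the Gaussian geometry: a point $(x,u)$ is \emph{local} when $|x-u|$ is small relative to $1/(1+|x|)$ (equivalently, $u$ lies in the ball associated with $x$ in the Mauceri--Meda admissible decomposition), and \emph{global} otherwise. Correspondingly write $m(\mathcal L) = m(\mathcal L)_{\mathrm{loc}} + m(\mathcal L)_{\mathrm{glob}}$. For the local part, the idea is to compare $\mathcal L$ with a suitable elliptic operator: since $\varphi \in L^\infty$, the multiplier $m$ extends analytically and is bounded together with derivatives in Hörmander-type norms on bounded spectral windows, so on the local region $m(\mathcal L)_{\mathrm{loc}}$ should behave like a genuine Calderón--Zygmund operator with respect to Lebesgue measure, which on local balls is comparable to $d\gamma_\infty$. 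One then invokes the standard Calderón--Zygmund machinery — a Calderón--Zygmund decomposition of $f \in L^1(\gamma_\infty)$ at height $\alpha$, the $L^2$ bound on the good part, and cancellation plus kernel regularity (a Hörmander integral condition) on the bad part — to obtain the weak type $(1,1)$ bound for $m(\mathcal L)_{\mathrm{loc}}$. The first and third authors' earlier work on the general Ornstein--Uhlenbeck maximal operator, together with the ``enhanced'' maximal theorem mentioned in the abstract, will supply the uniform control needed here.

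For the global part, which is the genuinely new and harder piece, I would work directly with the integral kernel $\mathcal M_\varphi(x,u)$ and aim to prove a weak-type estimate by the ``forbidden region'' method of Garc\'ia-Cuerva--Mauceri--Sj\"ogren--Torrea. The starting point is the explicit Mehler kernel: $K_t(x,u)$ is, up to normalization, $\exp$ of a negative-definite quadratic form in $(x,u)$ divided by the determinant of $I - e^{tB}$-type factors, so $\partial_t K_t(x,u)$ can be written as $K_t(x,u)$ times a rational-in-$t$, quadratic-in-$(x,u)$ expression. Because $\varphi$ is merely bounded, we cannot integrate by parts in $t$ freely; instead the key analytic input — flagged in the abstract — is a bound on the number of sign changes (zeros) of $t \mapsto \partial_t K_t(x,u)$ for fixed $(x,u)$. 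With such a bound, $|\mathcal M_\varphi(x,u)| \le \|\varphi\|_\infty \int_0^\infty |\partial_t K_t(x,u)|\,dt$ is controlled by a finite sum of terms $|K_{t_k}(x,u) - K_{t_{k+1}}(x,u)|$ at the critical times, reducing matters to pointwise Mehler-kernel bounds. One then estimates $\int_{\mathrm{glob}} |\mathcal M_\varphi(x,u)| f(u)\,d\gamma_\infty(u)$ by decomposing the global region dyadically in the relevant geometric quantities (the exponent of the quadratic form at its $t$-minimum, which plays the role of the ``Gaussian distance''), and shows the level sets $\{x : \text{global part} > \alpha\}$ have $\gamma_\infty$-measure $O(\|f\|_1/\alpha)$.

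The main obstacle will be making the global estimate uniform in the matrix data $B$ and $Q$: unlike the scalar case $B = -I$, the eigenvalues of $B$ can be complex, the matrix need not be diagonalizable, and the quadratic form in the Mehler kernel has anisotropic, $t$-dependent behavior, so one must control $\det(I - e^{2tB})$ (or the analogous $Q$-dependent Gram determinant), its derivative, and the time instants $t_1, \dots, t_N$ of the zeros of $\partial_t K_t$ — and show $N$ is bounded by a constant depending only on $n$. Establishing that zero-counting bound, presumably by viewing $\partial_t K_t(x,u) = 0$ as a transcendental equation whose solutions are governed by the eigenvalues of $B$ and applying a Rolle/variation-diminishing argument, is the technical heart. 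A secondary difficulty is that in the general (non-symmetric) setting $\mathcal L$ has no spectral resolution, so even the \emph{definition} of $m(\mathcal L)$ requires care and one must verify a posteriori that the operator defined in Section~\ref{def} does act as convolution against $\mathcal M_\varphi$ plus the local correction; the interchange of $\int_0^\infty \varphi(t)\,\partial_t(\cdot)\,dt$ with the spatial integration has to be justified by the same splitting, since the integral converges only conditionally near $t = 0$ and only the local/global decomposition tames both endpoints.
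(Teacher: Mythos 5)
Your overall architecture coincides with the paper's: the operator is defined through the McIntosh--Cowling--Doust--Yagi functional calculus and only afterwards identified with integration against $-\int\varphi(t)\,\partial_tK_t(x,u)\,dt$; the local part (where $|x-u|\lesssim 1/(1+|x|)$, via a partition of unity built on balls $B(x_j,1/(1+|x_j|))$) is handled by conjugating with $e^{R(x)}$ and verifying standard Calder\'on--Zygmund size and gradient bounds for the resulting kernel with respect to Lebesgue measure, together with the $L^2$ bound from the holomorphic functional calculus; and the global part is controlled by combining a bound on the number of zeros of $t\mapsto\partial_tK_t(x,u)$ --- proved exactly by the Rolle/variation-diminishing device you anticipate, through a constant-coefficient differential operator built from the eigenvalues of $B$ that annihilates $\partial_tK_t$ --- with an enhanced maximal-operator theorem in which the supremum over $t$ is placed inside the integral.

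There is, however, one step in your plan that fails as stated: you apply the sign-change bound on the whole half-line, writing $\|\varphi\|_\infty\int_0^\infty|\partial_tK_t(x,u)|\,dt$ as a finite sum of increments $|K_{t_k}(x,u)-K_{t_{k+1}}(x,u)|$. When $B$ has non-real eigenvalues $\lambda\pm i\mu$ --- precisely the non-normal situation the theorem addresses --- the function $t\mapsto\partial_tK_t(x,u)$ is a finite combination of terms $e^{(\lambda+i\mu)t}P(t)$ and is therefore oscillatory; its real part can vanish infinitely often on $(0,\infty)$ (compare $e^{\lambda t}\sin(\mu t)$), so no uniform bound on the number of zeros over the full half-line exists. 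The zero-counting argument only works on intervals of length less than $1/\mu$, hence on $(0,1]$ after subdividing into boundedly many pieces. For this reason the paper first splits the multiplier at $t=1$: on $(1,\infty)$ the integral $\int_1^\infty|\partial_tK_t(x,u)|\,dt$ converges absolutely, with the pointwise bound $e^{R(x)}\exp\bigl(-c\,|D_{-t}u-x|^2\bigr)\bigl(|D_{-t}u|+e^{-ct}\bigr)$, and the resulting kernel is estimated directly in polar coordinates with no zero count; only the piece over $(0,1]$, and only its global part, requires the sign-change argument. Your outline needs this additional splitting; with it, the rest of the plan goes through essentially as in the paper.
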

Thus we shall prove the inequality
\begin{equation} \label{thesis-mixed-Di}
\gamma_\infty
\{x\in\R^n : m(\mathcal L)\,
f(x) > C\alpha\} \le \frac{C}\alpha\,\|f\|_{L^1( \gamma_\infty)},\qquad \text{ $\alpha>0$,}
\end{equation}
 for  all  functions $f\in L^1 (\gamma_\infty)$,
with $C=C(n,Q,B)$.
     Our theorem extends Theorem~3.8 in  \cite{GCMST} to the framework of a general,   not necessarily normal, Ornstein--Uhlenbeck operator.


  In this paper, 
   we do not deal with
   holomorphic H\"ormander-type functional calculus.  
   For results in that context
the reader is referred in particular to  \cite{Carbonaro-Oliver} and to   \cite{HP}.
The literature in this field is vast;   good bibliographies are given in   
  \cite{Carbonaro-Oliver, HP, P}.

A  careful study of other
 exponential 
 integral inequalities 
 in the Gaussian framework may be found in \cite{CMP1, CMP2, CMP3}.
Finally, it is worth mentioning that
several interesting results concerning other issues of harmonic analysis
 in a nonsymmetric Ornstein--Uhlenbeck context, such as square functions, maximal operators and variational bounds,
  have recently appeared in \cite{Almeida, Almeida2}.


\medskip

What follows next is a description of the structure of the paper, which also gives
 a plan of the proof of Theorem \ref{weaktype1}.

In Section \ref{preliminaries}, we introduce some terminology and
recall from the authors' earlier papers \cite{CCS1, CCS2, CCS3}
a few estimates which are essential in our approach.
Section \ref{def} gives  a rigorous definition of  the multiplier operator,
and in Subsection \ref{defspl} we split this operator by splitting the integrals in \eqref{defmlambda} and
\eqref{heur}
into parts taken over $t<1$ and $t>1$.
Then in Section            \ref{derKt-section}  the time derivative  $\partial_t{K_t} $
of the Mehler kernel is computed and estimated.
This leads in  Section  \ref{parts} to some estimates for the kernels of the different parts of the operator.
There we also introduce some  technical simplifications that will reduce the complexity of the proof of Theorem~\ref{weaktype1}; further reductions will be presented
in Subsection \ref{simply}.
This proof is given in the remaining sections, in the following way.

The operator part with $t>1$ is dealt with in Section  \ref{t-large-section}.
The  part corresponding to $t<1$ is further split into a local and a global part in Section \ref{localization}, and several related estimates are given. Section \ref{local region} contains the proof for the local part
with standard
Calder\'on-Zygmund techniques.
The remaining, global part is more delicate. For its kernel we will have a bound
\begin{equation*}
\int_0^{1}
|\partial_t K_t (x,u)| \,dt \le \sum \left| \int \partial_t K_t (x,u) \,dt \right|,
\end{equation*}
where the integrals in the sum are taken between consecutive zeros of $\partial_t K_t$.
Therefore, we will need an estimate of
the number of zeros of $\partial_t K_t(x,u)$ as $t$ runs through the interval
  $(0, 1]$. This number turns out to be controlled by a constant depending only on  $n$ and $B$,
as verified in Section \ref{number-of-zeros}.
We can then complete the proof of   the weak type $(1,1)$
 in Section \ref{est-tsmall-glob-section}.
There we also need an enhanced version of the Ornstein--Uhlenbeck maximal operator theorem from [CCS2, Theorem 1.1].  Its proof is given
 in the Appendix (Section \ref{Appendix}).

\medskip

We will write $C<\infty$ and $c>0$ for various constants, all of which  depend only on $n$, $Q$  and $B$, unless otherwise explicitly stated. If $a$ and $b$ are positive quantities, $a \lesssim b$ or
equivalently  $b \gtrsim a$ means  $a \le C b$. When $a \lesssim b$ and also $b \lesssim a$,
we write  $a \simeq b$.
By $\N$ we denote the set of all nonnegative integers,
and $B(x,r)$ is the open ball with center $x$ and radius $r$.
 If $A$ is  a real $n\times n$ matrix, we write $\|A\|$
for its operator norm on $\R^n$
with the Euclidean norm $|\cdot|$\,.
 We will adopt the  dot notation for differentiation with respect to the time variable $t$,
  writing  $ \dot K_t = \partial_t K_t$.
\medskip

The authors  would like to thank Andrea Carbonaro for several helpful discussions.

\section{Preliminaries}\label{preliminaries}

In this section we collect some
results from \cite{CCS1, CCS2, CCS3} related to the Mehler kernel
of a general Ornstein--Uhlenbeck  semigroup.
\subsection{Some matrices and estimates}
~~

\vskip1pt

In terms of the two  real $n\times n$ matrices $Q$ and $B$  introduced in Section 1,
we define for $t\in (0,+\infty]$ the matrix
\begin{equation}\label{defQt}
Q_t=\int_0^t e^{sB}\,Q\,e^{sB^*}\,ds.
\end{equation}
Since $Q$ is real, symmetric and positive definite
and the  eigenvalues of $B$ have negative real parts, this integral  is  convergent  and
the matrix $Q_t$  is symmetric and positive definite and thus   invertible, for all $0<t \le \infty$.

It will be convenient to write
\[
|x|_Q = | Q_\infty^{-1/2}\,x|,  \qquad x \in \R^n,
\]
which is a norm on $\R^n$, and   $|x|_Q \simeq |x|$.
Further, we let $R(x)$ denote the (positive definite) quadratic form
\begin{equation}\label{def:R_normaQ}
R(x) = \frac12\, |x|_Q^2 ={\frac12 \left\langle Q_\infty^{-1}\,x ,x  \right\rangle}, \qquad\text{$x\in\R^n$}.
\end{equation}
The invariant measure is given by
\begin{equation*}
 d\gamma_\infty(x) =  (2\pi)^{-n/2}  \big( \det Q_\infty\big)^{-1/2}\exp(-R(x))\,dx.
\end{equation*}
Notice that $d\gamma_\infty$ is  normalized.

We will also use the one-parameter group of matrices
\begin{align}\label{defDt}
D_t = Q_\infty\, e^{-tB^*}\, Q_\infty^{-1}, \qquad t\in\mathbb R,
\end{align}
introduced in  \cite{CCS2}.
From \cite[formula (2.3) and Lemma 2.1]{CCS2} we know that
\begin{align}\label{Dt_again}
D_t = \left(Q_t^{-1} - Q_\infty^{-1}\right)^{-1}\, Q_t^{-1} \,e^{tB}, \qquad t>0.
\end{align}
and
\begin{align}\label{defDttt'}
D_t = e^{tB} + Q_t\, e^{-tB^*}\,Q_\infty^{-1}, \qquad t>0.
\end{align}
By means of a Jordan decomposition of $B^*$,  the following estimates were proved  in
\cite[Lemma 3.1]{CCS2}
\begin{equation}\label{est:2-eBs-v}
  e^{ct}\,|x| \,\lesssim \,|D_t\, x| \, \lesssim  \, e^{Ct}\, |x|
  \qquad
\text{ and }
\qquad
  e^{-Ct}\,|x|\, \lesssim\, |D_{-t}\, x| \, \lesssim \,  e^{-ct}\, |x|,
\end{equation}
holding for $t>0$ and all  $x\in \R^n$.
The same bounds are true  with  $ D_{t }$ replaced by  $e^{-tB}$ or $e^{-tB^*}$;  in particular,
\begin{equation}
\label{est:2-esBs-v}
  e^{ct}\,|x|\, \lesssim \,|e^{-tB}\, x| \, \lesssim \,  e^{Ct}\, |x|
  \qquad
\text{ and }
\qquad
  e^{-Ct}\,|x|\, \lesssim\, |e^{tB}\, x| \, \lesssim  \, e^{-ct}\, |x|
\end{equation}
for  $t>0$ and   $x\in \R^n$.

From these inequalities one  deduces (see  \cite[Lemma 3.2]{CCS2})
\begin{align}
&\| Q_t^{-1}\|\simeq (\min (1,t))^{-1},\label{stimaQt-1}
\\
&\|Q_t^{-1}-Q_\infty^{-1}\|\lesssim {t}^{-1}\,{e^{-ct}}\label{stime Q}.
\end{align}
Finally, we recall the following lemma, proved in \cite[Lemma 2.3]{CCS3}.
\begin{lemma} \label{differ}
  Let $ x \in \R^n$ and $|t| \le 1$. Then
  \begin{equation*}
    |x- D_t \,x| \simeq |t|\, |x|.
  \end{equation*}
\end{lemma}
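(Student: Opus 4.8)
The plan is to exploit that $(D_t)_{t\in\R}$ is a one-parameter group of matrices. Writing $A := -Q_\infty B^* Q_\infty^{-1}$, one reads off from \eqref{defDt} that $D_t = e^{tA}$, so that $D_0 = I$ and $\tfrac{d}{dt} D_t = A D_t = D_t A$. Since the eigenvalues of $B$ have negative real parts, $B$ and hence $A$ are invertible, so $|Ax| \ge c_A\,|x|$ for all $x\in\R^n$, with $c_A := \|A^{-1}\|^{-1} > 0$; this invertibility will be essential for the lower bound. The identity I would build everything on is
\[
  x - D_t x = (I - e^{tA})\,x = -\int_0^t A\,D_s x \, ds, \qquad t \in \R .
\]

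For the upper bound, I would use that \eqref{est:2-eBs-v}, applied with parameters $s$ and $-s$ for $0\le s\le 1$, gives $|D_s x|\simeq|x|$ uniformly in that range; the identity then yields $|x - D_t x| \le \int_0^{|t|}\|A\|\,|D_s x|\,ds \lesssim |t|\,|x|$ for $|t|\le 1$.

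The lower bound is the only real issue, because the vectors $A D_s x$ may rotate with $s$, so $\bigl|\int_0^t A D_s x\, ds\bigr|$ cannot be bounded below directly by $|t|$ times an infimum of integrands. Instead I would peel off the first-order term and write
\[
  x - D_t x = -t\,Ax - \int_0^t A\,(D_s x - x)\, ds .
\]
By the upper bound already established, $|D_s x - x|\lesssim|s|\,|x|$, so the remaining integral is at most $C\,t^2|x|$ for some constant $C$; together with $|Ax|\ge c_A|x|$ this gives $|x - D_t x| \ge c_A\,|t|\,|x| - C\,t^2|x| = |t|\,|x|\,(c_A - C|t|) \ge \tfrac{c_A}{2}\,|t|\,|x|$ once $|t| \le t_0 := c_A/(2C)$. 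For the complementary range $t_0 \le |t|\le 1$ I would argue by compactness: the eigenvalues of $D_t$ are $e^{-t\lambda}$ with $\lambda$ ranging over the eigenvalues of $B$, hence of modulus $e^{-t\Re\lambda}\neq 1$ for $t\neq 0$, so $D_t - I$ is invertible whenever $t\neq 0$. Therefore $(t,\omega)\mapsto|\omega - D_t\,\omega|$ is continuous and strictly positive on the compact set $\{t_0\le|t|\le1\}\times\{|\omega|=1\}$, hence bounded below by some $c_1>0$; scaling by $|x|$ and using $|t|\le 1$ gives $|x - D_t x|\ge c_1|x|\ge c_1|t|\,|x|$ on that range. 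Combining the two ranges yields $|t|\,|x|\lesssim|x - D_t x|$, which together with the upper bound proves the lemma.

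The step I expect to be the main obstacle is exactly this lower bound for $t$ bounded away from $0$: the Taylor-type argument is clean near $t = 0$ but degrades once $C|t|$ is no longer small, and one must fall back on a soft continuity/compactness argument, which crucially uses that $D_t$ never has the eigenvalue $1$ — again a consequence of the eigenvalues of $B$ having nonzero real parts.
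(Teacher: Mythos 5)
Your proof is correct. Note that the paper itself gives no argument for this lemma --- it simply cites \cite[Lemma 2.3]{CCS3} --- so there is no in-document proof to compare against; what you have written is a valid, self-contained substitute. Each step checks out: the identification $D_t=e^{tA}$ with $A=-Q_\infty B^*Q_\infty^{-1}$ follows from \eqref{defDt}; the upper bound uses only $|D_sx|\simeq|x|$ for $|s|\le 1$, which is \eqref{est:2-eBs-v}; the lower bound near $t=0$ is a clean first-order Taylor argument resting on the invertibility of $A$ (equivalently of $B$, guaranteed because its eigenvalues have nonzero real part); and the compactness argument on $t_0\le|t|\le 1$ correctly uses that $1$ is never an eigenvalue of $D_t$ for $t\ne 0$, together with homogeneity in $x$ and $|t|\le 1$ to convert the bound $|x-D_tx|\ge c_1|x|$ into $\ge c_1|t|\,|x|$. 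All constants produced depend only on $n$, $Q$ and $B$, as the paper's conventions require. You correctly identified the genuine difficulty --- that the integral $\int_0^t AD_sx\,ds$ cannot be bounded below termwise --- and the two-regime strategy (quantitative near $0$, soft elsewhere) resolves it.
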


\subsection{Spectrum and generalized eigenspaces of $\mathcal L$}

\vskip1pt

Let $\lambda_1,\dots, \lambda_r$ be the eigenvalues of $B$.
   It is known that the spectrum of $\mathcal L$
in $L^p(\gamma_\infty)$, $1<p<\infty$, is
\begin{equation}\label{spL}
    \left\{-\sum_{i=1}^r  n_i\,\lambda_i: n_i \in \N,\; i =1,\dots,r \right\} \subset \{z \in \C:\: \Re z>0 \} \cup \{0\},
\end{equation}
see \cite[Theorem 3.1]{MPP}.

Each point $\lambda$ in this set is an eigenvalue of  $\mathcal L$. The
corresponding generalized eigenfunctions, i.e., the functions annihilated by
  $(\mathcal L - \lambda)^k$ for some $k \in \N$, are polynomials, see  \cite[Theorem  9.3.20]{Lorenzi}.
For each $\lambda$ they form a finite-dimensional space, and these generalized
eigenspaces together span a dense subspace of $L^2(\gamma_\infty)$.
In particular, $0$ is an eigenvalue of $\mathcal L$. The corresponding
eigenspace, which we denote by $\mathcal E_0$, is of dimension $1$ and consists of the constant functions. As shown in \cite[Lemma 2.1]{CCS4},
this eigenspace is orthogonal to all other generalized eigenfunctions of $\mathcal L$.
We denote by  $L^2_0(\gamma_\infty)$ the orthogonal complement of $\mathcal E_0$ in  $L^2(\gamma_\infty)$.

\vskip15pt

\subsection{The Mehler kernel}
 For $x,u\in\R^n$ and $t>0$ the Mehler kernel $K_t$   is given by
(see \cite[formula (2.6)]{CCS2})
\begin{align}\label{defKRt}
K_t (x,u)
=
\Big(
\frac{\det \, Q_\infty}{\det \, Q_t}
\Big)^{{1}/{2} }\,
e^{R(x)}\,
\exp \Big[
{-\frac12
\left\langle (
Q_t^{-1}-Q_\infty^{-1}) (u-D_t \,x) \,,\, u-D_t\, x\right\rangle}\Big].\quad\quad
\end{align}

It is convenient to use this expression  for $K_t$ when $t\leq1$.
But  for $t\ge 1$, we will use the following alternative, which can be obtained from
\cite[first formula in the proof of Proposition 3.3]{CCS2},
\begin{align}\label{defKRt1}
K_t (x,u)
=
\Big(
\frac{\det Q_\infty}{\det  Q_t}
\Big)^{1/2 }
e^{R(x)}
\exp \Big[
{-\frac12
\left\langle Q_t^{-1}
e^{tB}  ( D_{-t}\,u-x) , D_t \,( D_{-t}\,u-x)\right\rangle}\Big].\quad\quad
\end{align}
\medskip

For $0<t\leq 1$ we have the following
estimates, proved in \cite[(2.10)]{CCS3}
\begin{equation}\label{litet}
   \frac{ e^{R( x)}}{t^{n/2}}\exp\left[-C\,\frac{|u-D_t \,x |^2}t\right]
 \,\lesssim\,   K_t(x,u)
\,\lesssim \, \frac{ e^{R( x)}}{t^{n/2}} \exp\left[-c\,\frac{|u-D_t\, x |^2}t\right].
\end{equation}
When $t\geq 1$
one has instead (see \cite[(2.11)]{CCS3})
\begin{equation} \label{tstort}
e^{R(x)}
\exp
\Big[
-C
\left|
D_{-t}\,u- x\right|_Q^2
\Big]
\,\lesssim\,
K_t (x,u)
\,\lesssim\,
e^{R(x)}
\exp
\Big[
-\frac12
\left|
D_{-t}\,u- x\right|_Q^2
\Big].\end{equation}

\subsection{Polar coordinates}
We will use a variant of polar  coordinates first introduced  in \cite{CCS1}.
Fix  $\beta>0$ and consider the ellipsoid
\begin{equation*}
\ellipses_\beta
=\{x\in\R^n:\, R(x)=
\beta\}
\,.\end{equation*}
Any $x\in\R^n,\, x\neq 0$,
can be written uniquely as
\begin{equation}\label{def-coord}
x=D_s \,\tilde x
\,,
\end{equation}
for some $\tilde x\in \ellipses_\beta$
and $s\in\R$. We call $(s, \tilde x)$ the
polar coordinates of $x$.

The Lebesgue measure in $\R^n$ is given in terms of
$(s, \tilde x)$ by
\begin{align}\label{def:leb-meas-pulita}
  dx =
e^{-s\tr B}\, \frac{ |Q^{1/2}\, Q_\infty^{-1} \tilde x |^2}
{2\,| Q_\infty^{-1} \tilde x  |}\,
 dS_\beta( \tilde x)\,ds\,,
\end{align}
where $dS_\beta$ denotes the area measure of $\ellipses_\beta$.
See \cite[Proposition 4.2]{CCS2} for a proof.


      \section{Definition  and splitting of the multiplier operator}\label{def}

\subsection{Definition   of  the multiplier operator}
We use the definition described in Cowling et al.\ \cite[Section 2]{CDMY}, which goes back to McIntosh \cite{M}. The starting-point in  \cite{CDMY} is
 an operator $T$ defined  on a Hilbert (or Banach) space, which will be  $L^2_0(\gamma_\infty)$ in our case. This operator is to be densely defined and one-to-one with dense range,
 and its spectrum must be contained in  a closed sector
 \begin{equation*}
S_\omega = \{z \in \C: |\mathrm{arg} z| \le \omega\}\cup \{ 0\},
 \end{equation*}
for some $\omega \in (0, \pi/2)$.
Further,  the resolvent of $T$ should satisfy the estimate
\begin{equation}\label{resolvent}
   \|(T-zI )^{-1}  \| \le C \,|z|^{-1},\qquad  z \in \C\setminus S_\omega,
\end{equation}
for some constant $C$,
where we refer to the operator norm on  $L^2_0(\gamma_\infty)$.

Therefore, we define the operator $T$ as the restriction of  $\mathcal L$ to $L^2_0(\gamma_\infty)$.
 We will prove Theorem \ref{weaktype1} with $\mathcal L$ replaced by $T$. The theorem then follows, since $\mathcal L$ vanishes on $\mathcal E_0$.

From the preceding section,
it is clear that $T$ has all the  properties required in \cite{CDMY}  mentioned above, except possibly the inequality \eqref{resolvent}.
We shall now verify \eqref{resolvent}.

According to \cite[Theorem 1 and Remark 6]{CFMP1}, there exists an angle $\theta_2 \in (0, \pi/2)$ such that the semigroup $\left(e^{-tT}\right)_{t>0}$ is a contraction on $L^2_0(\gamma_\infty)$ for each $t$ in the sector
$S_{\theta_2}$.
Then  \eqref{resolvent}  follows from some well-known arguments for bounded analytic semigroups (see \cite[Ch. II, Section 4.a]{Engel}).
Anyway, we
give
 a concise proof.

Fix a $\theta \in (0, \theta_2)$; like $\theta_2$ this $\theta$ will only depend on  $n$, $Q$  and $B$.
If $z$ is on the negative real axis,  the contraction property implies
\begin{equation} \label{ray}
   (T-zI )^{-1} = \int_{0}^{+\infty} e^{-t(T-zI)}\,dt = e^{i\theta}\, \int_{0}^{+\infty} e^{-te^{i\theta}T}\, e^{te^{i\theta}zI}\,dt,
\end{equation}
where we moved  the path of integration to the  ray $e^{i\theta}\,\R_+$ in $\C$.
Here we want to let $z = re^{i\varphi}$,
with $r > 0$ and  $\varphi\in (\pi/2 - \theta/2 , \pi] $. Then
\begin{equation*}
0 < \theta/2 < \theta + \varphi -\pi/2 \,  \,\le\, \, \theta + \pi/2 < \pi
\end{equation*}
 and  so
\begin{equation*}
   \Re (t\,e^{i\theta}z) = t\,r\cos(\theta + \varphi) = -t\,r\sin(\theta + \varphi-\pi/2) < -c\,t\,r.
\end{equation*}
For such $z$ the second integral in \eqref{ray} converges, and by analyticity  it equals $e^{-i\theta}\,(T-zI )^{-1}$. Thus
\begin{equation*}
 \|(T-zI )^{-1}\| \le  \int_{0}^{+\infty}   e^{-ctr}\,dt
 \le \frac{C}{|z|},
\end{equation*}
which proves \eqref{resolvent} for $z$ in the upper half-plane, with $\omega = \pi/2 - \theta/2$. To deal with the case when $z$ is in the lower half-plane,
it is enough to take the complex conjugate of the equation  \eqref{ray} and repeat the argument, because  $T$ is real.
We have thus verified  \eqref{resolvent}.

Since 0 is not in the spectrum of $T$, we have the following improvement of
\eqref{resolvent}:
\begin{equation}\label{resolventimp}
   \|(T-zI )^{-1}  \| \le C \,(1 + |z|)^{-1},\qquad  z \in \C\setminus S_\omega.
\end{equation}

\vskip3pt

The function $m$ is of Laplace transform type and thus defined and analytic in the right half-plane.
Moreover, it is bounded  on any sector  $S_\phi$ with $0 < \phi <\pi/2$.
The definition  of $m(T)$ in \cite{CDMY}  goes via a complex integral involving the resolvent of $T$. To make this integral convergent,
we multiply the function $m(z)$ by
$\psi(z) = 1/(1+z^2)$,
following \cite{CDMY}.
With  $\omega \in (0, \pi/2)$ fulfilling  \eqref{resolventimp},
 we  fix a $\nu \in (\omega, \pi/2)$ and let $\Gamma$ be the path
\begin{equation*}
\Gamma(t) = |t|\,e^{i\nu\, \mathrm{sgn}\, t},  \qquad -\infty < t < \infty.
\end{equation*}
Now define
\begin{equation*}
  (\psi m)(T) = \frac 1 {2\pi i} \int_\Gamma \psi(z) m(z)\,(zI-T)^{-1}\,dz,
\end{equation*}
which is a convergent integral because of \eqref{resolventimp}, and let
\begin{equation}  \label{defmt}
m(T) =  \psi(T)^{-1} (\psi m)(T).
\end{equation}

\begin{proposition}\label{restr}
  Let $\lambda \ne 0$ be a  generalized eigenvalue  of $T$ with generalized eigenspace
 $\mathcal E_\lambda$. Then  the restriction to  $\mathcal E_\lambda$ of $m(T)$ (defined above)
 coincides with the restriction to  $\mathcal E_\lambda$ of the integral
   \begin{equation*}
- \int_0^{+\infty}
\varphi (t)\,
\frac{d}{dt}\, e^{ -tT}
\,dt.
\end{equation*}.
\end{proposition}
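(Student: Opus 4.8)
The plan is to reduce everything to a finite-dimensional computation on the generalized eigenspace $\mathcal E_\lambda$, which is finite-dimensional and invariant under both the semigroup $\left(e^{-tT}\right)_{t>0}$ and the resolvent $(zI-T)^{-1}$, and then to recognize that on $\mathcal E_\lambda$ the two operators in the statement are both given by applying the \emph{same} holomorphic function $m$ to the (nilpotent-plus-scalar) restriction $T|_{\mathcal E_\lambda}$. First I would note that $T|_{\mathcal E_\lambda} = \lambda I + N$ for some nilpotent operator $N$ on $\mathcal E_\lambda$, say $N^k=0$. For the semigroup side, write $e^{-tT}|_{\mathcal E_\lambda} = e^{-t\lambda}\sum_{j=0}^{k-1}\frac{(-tN)^j}{j!}$, so that
\begin{equation*}
-\int_0^{+\infty}\varphi(t)\,\frac{d}{dt}\,e^{-tT}\Big|_{\mathcal E_\lambda}\,dt
= -\int_0^{+\infty}\varphi(t)\,\frac{d}{dt}\Big(e^{-t\lambda}\sum_{j=0}^{k-1}\frac{(-tN)^j}{j!}\Big)\,dt,
\end{equation*}
which is an absolutely convergent $\mathrm{End}(\mathcal E_\lambda)$-valued integral because $\varphi\in L^\infty$, $\Re\lambda>0$, and the polynomial factors are dominated by the exponential decay. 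Differentiating under the sum and integrating term by term, each entry is a finite linear combination of integrals $\lambda^a\int_0^\infty\varphi(t)t^b e^{-t\lambda}\,dt$, and a short computation identifies the result with $m(\lambda I+N)$ interpreted via the finite Taylor expansion $m(\lambda I+N)=\sum_{j=0}^{k-1}\frac{m^{(j)}(\lambda)}{j!}N^j$; here one uses that differentiating \eqref{defmlambda} $j$ times in $\lambda$ is legitimate since $m$ is holomorphic in $\Re z>0$.

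For the $m(T)$ side, I would restrict the defining formula \eqref{defmt} to $\mathcal E_\lambda$. Since $\mathcal E_\lambda$ is invariant under $(zI-T)^{-1}$ for $z\notin\mathrm{spec}(T)$, and $\psi(T)^{-1}$ likewise acts on $\mathcal E_\lambda$ (note $\psi(\lambda)\ne0$ because $\pm i\notin S_\omega\supset\mathrm{spec}(T)$), the operator $(\psi m)(T)|_{\mathcal E_\lambda}$ equals $\frac{1}{2\pi i}\int_\Gamma\psi(z)m(z)(zI-T|_{\mathcal E_\lambda})^{-1}\,dz$, and $(zI-T|_{\mathcal E_\lambda})^{-1}=\sum_{j=0}^{k-1}\frac{N^j}{(z-\lambda)^{j+1}}$. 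The contour $\Gamma$ can be completed to a loop around $\lambda$ (using the decay $\psi(z)m(z)=O(|z|^{-2})$ for large $|z|$ and the holomorphy of $\psi m$ in the right half-plane, plus the fact that $\lambda$ lies strictly to the right of $\Gamma$), so by Cauchy's integral formula for derivatives the $z$-integral picks out $\sum_{j=0}^{k-1}\frac{(\psi m)^{(j)}(\lambda)}{j!}N^j=(\psi m)(\lambda I+N)$. Multiplying by $\psi(T)^{-1}|_{\mathcal E_\lambda}=\psi(\lambda I+N)^{-1}$ and using the multiplicativity of the holomorphic functional calculus for the single operator $\lambda I+N$ on the finite-dimensional space $\mathcal E_\lambda$ gives $m(T)|_{\mathcal E_\lambda}=m(\lambda I+N)$, matching the semigroup computation above.

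The main obstacle I expect is the careful justification that the contour $\Gamma$ may be closed up around $\lambda$ without changing the integral — one must check that no other part of $\mathrm{spec}(T)$ interferes (it does not, since $\mathcal E_\lambda$ is a spectral subspace on which $T$ acts with the single eigenvalue $\lambda$) and that the arc at infinity contributes nothing, which is exactly where the factor $\psi(z)=1/(1+z^2)$ is needed. A secondary technicality is making the termwise differentiation and integration in the semigroup formula rigorous as a Bochner integral in the finite-dimensional space $\mathrm{End}(\mathcal E_\lambda)$, but this is routine given $\varphi\in L^\infty(0,\infty)$ and $\Re\lambda>0$. Everything else is bookkeeping with the finite Taylor expansion of $m$ at $\lambda$, which is the common value of both sides.
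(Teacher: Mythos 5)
Your proposal is correct and follows essentially the same route as the paper: reduce to the finite-dimensional space $\mathcal E_\lambda$ where $T=\lambda I+N$ with $N$ nilpotent, expand the resolvent as $\sum_j (z-\lambda)^{-j-1}N^j$, evaluate the contour integral by Cauchy's formula to get $(\psi m)(\lambda I+N)$, cancel $\psi(T)$, and identify $m(\lambda I+N)=\sum_j \frac{m^{(j)}(\lambda)}{j!}N^j$ with the Taylor-expanded semigroup integral. The only cosmetic differences are that you invoke multiplicativity of the finite-dimensional functional calculus where the paper carries out the Leibniz-rule factorization of $(\psi m)^{(j)}(\lambda)$ explicitly, and you run the final Laplace-transform identification in the opposite direction.
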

Notice that this is the integral from
  \eqref{heur}, and that its restriction to the finite-dimensional, $T$-invariant subspace
$\mathcal E_\lambda$ makes perfect sense.  Further,  $m(T)$ is determined  by these restrictions, since
 the $\mathcal E_\lambda$ together span
$L^2_0(\gamma_\infty)$ and $m(T)$ is  bounded on $L^2_0(\gamma_\infty)$,
as proved by  \cite[Lemma 3.7]{Carbonaro-Oliver}.

\begin{proof}
Observe first that $T\big|_{\mathcal E_\lambda} = \lambda I + R_\lambda$, where $R_\lambda$ a nilpotent operator on $\mathcal E_\lambda$.
For $z \in \Bbb C \setminus \{\lambda\}$  this leads to
\begin{align*}
  (zI-T)^{-1}\big|_{\mathcal E_\lambda}\,= \,\left((z-\lambda)I-R_\lambda\right)^{-1}
  = &\, (z-\lambda)^{-1}\left(I - \frac{R_\lambda}{z-\lambda}\right)^{-1} \\
  = & \,\sum_j \frac{1}{(z-\lambda)^{j+1}}\,R_\lambda^j ,
\end{align*}
where the sum is finite. Thus
\begin{align*}
(\psi m)(T)\big|_{\mathcal E_\lambda}\,
=&\: \frac 1 {2\pi i}\, \int_\Gamma  \psi(z)\, m(z)\,   (zI-T)^{-1}\big|_{\mathcal E_\lambda}\,dz \\
=& \: \frac 1 {2\pi i} \sum_j \int_\Gamma \psi(z)\, m(z)\, \frac{1}{(z-\lambda)^{j+1}}\,dz\;R_\lambda^j \\
=& \, \sum_{j} \frac 1 {j!} \, (\psi m)^{(j)}(\lambda)\,R_\lambda^j\\
=& \, \sum_{i,\,k} \frac 1 {i!\,k!}\,\psi^{(i)}(\lambda)\,m^{(k)}(\lambda)\,R_\lambda^{i+k}\\
=& \,  \sum_{i} \frac 1 {i!}\,\psi^{(i)}(\lambda)\,R_\lambda^{i} \:
 \sum_{k} \frac 1 {k!}\,m^{(k)}(\lambda)\,R_\lambda^{k} \\
 =&\: \psi(T)\, \sum_{k} \frac 1 {k!}\,m^{(k)}(\lambda)\:R_\lambda^{k}.
   \end{align*}

From \eqref{defmt} and  \eqref{defmlambda}, we conclude that
\begin{align*}
 m(T)\big|_{\mathcal E_\lambda}\,
& = \sum_{k} \frac 1 {k!}\,m^{(k)}(\lambda)\:R_\lambda^{k} \\
 & = - \sum_{k} \frac 1 {k!}\, \int_0^{+\infty}
\varphi (t)\, \left(\frac{\partial}{\partial \lambda}\right)^{k}
\frac{d}{dt}\, e^{-\lambda t} \,dt \:R_\lambda^{k} \\
 & = - \int_0^{+\infty} \varphi (t)\,\frac{d}{dt}\,
 \sum_{k} \frac 1 {k!}\,\left(\frac{\partial^{k}}{\partial \lambda^{k}}\,e^{-\lambda t}\right)\, dt  \:R_\lambda^{k}.
   \end{align*}

   Here the sum equals
   \begin{align*}
\sum_{k}e^{-\lambda t}\, \frac 1 {k!}\, (-t)^k\, R_\lambda^{k} = e^{-\lambda t}\, e^{- tR_\lambda} = e^{-tT},
   \end{align*}
   and the  proposition follows.
\end{proof}

\subsection{Splitting of the multiplier operator}\label{defspl}

Given  $\varphi\in L^\infty (0, +\infty)$, we will restrict the integral in \eqref{defmlambda} to various intervals. For $\varepsilon > 0$ we let
\begin{equation*}
m_\varepsilon(\lambda) =
-\int_\varepsilon^{+\infty} \varphi (t)\, \frac{d}{dt}\, e^{-\lambda t}\,dt.
\end{equation*}
But replacing $\epsilon$ by $0$ we also define, in a slightly inconsistent way,
\begin{equation*}
m_0(\lambda) =
-\int_0^{1} \varphi (t)\, \frac{d}{dt}\, e^{-\lambda t}\,dt,
\end{equation*}
and observe that
\begin{equation*}
m(T) = m_1(T) + m_0(T).
\end{equation*}
Then \eqref{Mehler-kernel-derivative-mult} hints that $m_\varepsilon(T)$ and $m_0(T)$ should have off-diagonal kernels

given by
\begin{equation} \label{eps}
\mathcal M_{\varepsilon} (x,u)
:=
 -\int_\varepsilon^{+\infty}
\varphi (t)\,
\dot K_t (x,u)
\,dt
\end{equation}
and
\begin{equation}\label{mnoll}
\mathcal M_0 (x,u)
:=
 -\int_0^1
\varphi (t)\,
\dot K_t (x,u)
\,dt.
\end{equation}
As will be verified in Section~\ref{parts},
 $\mathcal M_{\varepsilon}$ is the kernel of $m_\varepsilon(T)$ for any  $\varepsilon > 0$, and we use it   
in Section~\ref{t-large-section} to control $m_1(T)$. But $\mathcal M_0$ is singular and \eqref{mnoll} is problematical on the diagonal $x=u$.
We shall need to consider separately the global and local parts of $m_0(T)$; they will be suitably defined in
Section \ref{local region}.

\section{The time derivative of the Mehler kernel}\label{derKt-section}

We compute the derivative $\dot{K_t}
= \partial_t K_t (x,u)$ 
 and estimate it for small and large $t$.
As a preparation, we  work out the $t$ derivatives
of some of the matrices introduced in the previous section.

\begin{lemma}\label{derivatives}
For all $t>0$ one has
\begin{align}
\dot{Q_t}&=\,
e^{tB}\, Q\, e^{tB^*};
\label{28}\\
\frac{d}{dt}\, Q_t^{-1}&=-Q_t^{-1} \, \dot{Q_t}\, Q_t^{-1}
=\, -Q_t^{-1}\, e^{tB}\, Q\, e^{tB^*}\, Q_t^{-1}; \label{8}\\
\frac{d}{dt}\,
\det Q_t
&= \,\det Q_t\,
\tr (Q_t^{-1} \, \dot{Q_t})       = \det Q_t\, \tr (Q_t^{-1} \, e^{tB} Q e^{tB^*});
\label{Jacobi}\\
\dot D_t&=\, - Q_\infty\, B^*\, e^{-tB^*}\, Q_\infty^{-1}
=\, - Q_\infty\, B^* \, Q_\infty^{-1}\, D_t.
\label{derd-t}
\end{align}
\end{lemma}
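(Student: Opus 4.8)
The plan is to prove Lemma \ref{derivatives} by straightforward differentiation of the defining formulas, using only elementary calculus and the derivative of the matrix exponential.

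\medskip

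\emph{Proof of \eqref{28}.} I would differentiate the definition \eqref{defQt}, namely $Q_t = \int_0^t e^{sB}\,Q\,e^{sB^*}\,ds$, directly under the fundamental theorem of calculus. Since the integrand $s \mapsto e^{sB} Q e^{sB^*}$ is continuous, we get immediately $\dot Q_t = e^{tB}\,Q\,e^{tB^*}$, which is \eqref{28}. No obstacle here.

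\medskip

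\emph{Proof of \eqref{8}.} From $Q_t\,Q_t^{-1} = I$, differentiating both sides gives $\dot Q_t\, Q_t^{-1} + Q_t\, \frac{d}{dt}Q_t^{-1} = 0$, hence $\frac{d}{dt}Q_t^{-1} = -Q_t^{-1}\,\dot Q_t\, Q_t^{-1}$; substituting \eqref{28} yields the second equality in \eqref{8}. One should note that $t \mapsto Q_t^{-1}$ is differentiable because $Q_t$ is invertible and depends smoothly on $t$ (matrix inversion is smooth on $GL_n$), so the product rule applies.

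\medskip

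\emph{Proof of \eqref{Jacobi}.} This is Jacobi's formula for the derivative of a determinant: $\frac{d}{dt}\det Q_t = \det Q_t\,\tr\!\big(Q_t^{-1}\,\dot Q_t\big)$. I would either cite Jacobi's formula or derive it quickly from $\det Q_t = \exp\big(\tr \log Q_t\big)$ (valid since $Q_t$ is symmetric positive definite, so $\log Q_t$ is well-defined) together with \eqref{8}; differentiating gives $\frac{d}{dt}\log\det Q_t = \tr\big(Q_t^{-1}\dot Q_t\big)$. Substituting \eqref{28} gives the stated form with $e^{tB} Q e^{tB^*}$.

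\medskip

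\emph{Proof of \eqref{derd-t}.} I would differentiate the definition \eqref{defDt}, $D_t = Q_\infty\,e^{-tB^*}\,Q_\infty^{-1}$, treating $Q_\infty$ as constant in $t$. Using $\frac{d}{dt}e^{-tB^*} = -B^*\,e^{-tB^*} = -e^{-tB^*}\,B^*$, we obtain $\dot D_t = -Q_\infty\,B^*\,e^{-tB^*}\,Q_\infty^{-1}$. For the second expression, insert $Q_\infty^{-1}Q_\infty = I$ between $B^*$ and $e^{-tB^*}$ to write this as $-Q_\infty\,B^*\,Q_\infty^{-1}\,(Q_\infty\,e^{-tB^*}\,Q_\infty^{-1}) = -Q_\infty\,B^*\,Q_\infty^{-1}\,D_t$, which is \eqref{derd-t}.

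\medskip

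Overall there is no real obstacle: the lemma is a collection of routine identities, and the only points requiring a word of justification are the differentiability of $t \mapsto Q_t^{-1}$ and the use of Jacobi's formula (or equivalently the $\log\det$ trick, which is clean here because $Q_t$ is symmetric positive definite). I would present the four computations in the order above, each in one or two lines.
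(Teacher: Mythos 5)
Your proposal is correct and follows essentially the same route as the paper: differentiate \eqref{defQt} directly for \eqref{28}, differentiate $Q_t\,Q_t^{-1}=I$ for \eqref{8}, invoke Jacobi's formula for \eqref{Jacobi}, and differentiate \eqref{defDt} for \eqref{derd-t}. The extra remarks on the smoothness of matrix inversion and the $\log\det$ derivation of Jacobi's formula are fine but not needed beyond what the paper already does.
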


\begin{proof}
The equality \eqref{28}
trivially follows from \eqref{defQt}.
To obtain \eqref{8}, one differentiates the equation $Q_t \, Q_t^{-1}
=I$ and applies \eqref{28}.
Since $Q_t$ is nonsingular,  Jacobi's formula
implies \eqref{Jacobi} (see \cite[Fact 10.11.19]{Bernstein}).
Finally, we obtain the two equalities in \eqref{derd-t} from \eqref{defDt}.
\end{proof}

It will be convenient  to have two different expressions for
the $t$ derivative of the Mehler kernel, as follows.

\begin{lemma}\label{derivate-nucleo}
For all $(x,u) \in \mathbb R^n\times \mathbb R^n$ and
$t>0$, we have
\begin{align*}
\dot K_t(x,u) = K_t(x,u) \, N_t(x,u),
\end{align*}
where the function $N_t$ is given by
\begin{align}\label{R}
\notag
N_t (x,u)&=-\frac12\,{\tr
\big(Q_t^{-1} \, e^{tB}\, Q\, e^{tB^*}\big)}
+\frac12\,
\left| Q^{1/2}\, e^{tB^*}\, Q_t^{-1}\,(u-D_t\, x)
\right|^2
\\
&\qquad\qquad\qquad
-
\left\langle Q_\infty \,B^*\, Q_\infty^{-1}\, D_t\, x\,,\,
(Q_t^{-1}-Q_\infty^{-1})\,(u-D_t\, x)\right\rangle,
\end{align}
and also by
\begin{align}\label{P}
\notag
 N_t (x,u)=&
-
\frac12\,
{\tr\left(Q_t^{-1} \, e^{tB}\, Q\, e^{tB^*}\right)}{}
+
\frac12 \, \left|
Q^{1/2}\, e^{tB^*}\, Q_t^{-1}\, e^{tB}\,
(D_{-t}\,u- x)
\right|^2
\\\notag
&
-
\left\langle
Q_t^{-1}\,
B\, e^{tB}\,
(D_{-t}\,u- x)
\,,\,
e^{tB}\,
(D_{-t}\,u- x)
\right \rangle
\\\notag
&
-
\left\langle
Q_t^{-1}\,
e^{tB}\,
Q_\infty\, B^*\,  Q_\infty^{-1}\, D_{-t}\, u\,,\,
e^{tB}\,
(D_{-t}\,u- x)
\right \rangle\notag
\\ &
-
\left\langle
 B^* \, Q_\infty^{-1}\, D_{-t}\, u \,,\, D_{-t }\,u-x
\right\rangle  \notag
\\
=:& \:I_t+II_t(x,u)+III_t(x,u)+IV_t(x,u)+V_t(x,u).
\end{align}
\end{lemma}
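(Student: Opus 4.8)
The plan is to differentiate the Mehler kernel \eqref{defKRt} directly with respect to $t$ and then read off $N_t = \dot K_t/K_t$, using Lemma~\ref{derivatives} to handle the matrix derivatives. Write $K_t(x,u) = (\det Q_\infty/\det Q_t)^{1/2}\, e^{R(x)}\, e^{-\Phi_t(x,u)}$, where
\[
\Phi_t(x,u) = \tfrac12 \left\langle (Q_t^{-1}-Q_\infty^{-1})(u-D_t x),\, u - D_t x\right\rangle.
\]
Then $\dot K_t = K_t\cdot\big(-\tfrac12\,\tfrac{d}{dt}\log\det Q_t - \dot\Phi_t\big)$, so $N_t = -\tfrac12\,\tfrac{d}{dt}\log\det Q_t - \dot\Phi_t$. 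For the first term, \eqref{Jacobi} gives $\tfrac{d}{dt}\log\det Q_t = \tr(Q_t^{-1}\dot Q_t) = \tr(Q_t^{-1} e^{tB}Q e^{tB^*})$, which produces exactly the term $I_t$ in \eqref{P} (equivalently the first term in \eqref{R}). For $\dot\Phi_t$ I would apply the product rule across the three $t$-dependent ingredients: the matrix $Q_t^{-1}-Q_\infty^{-1}$ (whose derivative is $\tfrac{d}{dt}Q_t^{-1} = -Q_t^{-1}e^{tB}Q e^{tB^*}Q_t^{-1}$ by \eqref{8}, since $Q_\infty^{-1}$ is constant) and the two copies of the vector $v_t := u - D_t x$ (whose derivative is $-\dot D_t x = Q_\infty B^* Q_\infty^{-1} D_t x$ by \eqref{derd-t}). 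Differentiating the quadratic form and using symmetry of $Q_t^{-1}-Q_\infty^{-1}$ to combine the two vector-derivative terms yields
\[
\dot\Phi_t = -\tfrac12\left\langle Q_t^{-1}e^{tB}Q e^{tB^*}Q_t^{-1} v_t,\, v_t\right\rangle + \left\langle (Q_t^{-1}-Q_\infty^{-1})v_t,\, -\dot D_t x\right\rangle.
\]
The first piece here is $-\tfrac12|Q^{1/2}e^{tB^*}Q_t^{-1}v_t|^2$ and the second, with $-\dot D_t x = Q_\infty B^* Q_\infty^{-1}D_t x$, is $\langle (Q_t^{-1}-Q_\infty^{-1})v_t,\, Q_\infty B^* Q_\infty^{-1}D_t x\rangle$; moving the symmetric matrix $Q_t^{-1}-Q_\infty^{-1}$ to the other side of the inner product and negating gives exactly the last term of \eqref{R}. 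This establishes the expression \eqref{R} for $N_t$.

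For the second expression \eqref{P}, the cleanest route is to start instead from the alternative form \eqref{defKRt1} of the Mehler kernel, which is adapted to large $t$. Here the exponent is $-\Psi_t(x,u)$ with
\[
\Psi_t(x,u) = \tfrac12\left\langle Q_t^{-1} e^{tB}(D_{-t}u - x),\, D_t(D_{-t}u - x)\right\rangle,
\]
and the prefactor is the same. The term $I_t$ again comes from $-\tfrac12\,\tfrac{d}{dt}\log\det Q_t$. For $\dot\Psi_t$ there are now four $t$-dependent factors: $Q_t^{-1}$, $e^{tB}$, $D_t$, and the vector $w_t := D_{-t}u - x$ appearing twice (once inside $e^{tB}(\cdot)$, once inside $D_t(\cdot)$). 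The derivatives needed are $\tfrac{d}{dt}Q_t^{-1}$ from \eqref{8}, $\tfrac{d}{dt}e^{tB} = Be^{tB}$, $\dot D_t = -Q_\infty B^* Q_\infty^{-1} D_t$ from \eqref{derd-t} (and the corresponding $\tfrac{d}{dt}D_{-t} = Q_\infty B^* Q_\infty^{-1} D_{-t}$, so that $\dot w_t = Q_\infty B^* Q_\infty^{-1}D_{-t}u$). Expanding the product rule term by term: differentiating $Q_t^{-1}$ gives (after one uses $D_t = e^{tB} + Q_t e^{-tB^*}Q_\infty^{-1}$ from \eqref{defDttt'}, or directly) the term $II_t$ with the squared norm $\tfrac12|Q^{1/2}e^{tB^*}Q_t^{-1}e^{tB}w_t|^2$ together with a leftover that must be reconciled; differentiating $e^{tB} \mapsto Be^{tB}$ inside the first slot gives $III_t$; differentiating $w_t$ in that same slot, using $\dot w_t = Q_\infty B^* Q_\infty^{-1}D_{-t}u$, gives $IV_t$; and differentiating $D_t \mapsto \dot D_t = -Q_\infty B^* Q_\infty^{-1}D_t$ in the second slot, combined with differentiating the second copy of $w_t$, gives $V_t$ after simplification using $D_t D_{-t} = I$ and $D_t e^{-tB}$-type identities. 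The algebra here is more involved than for \eqref{R}, so I would organize it carefully, grouping terms by which factor was differentiated and repeatedly invoking \eqref{defDt}, \eqref{Dt_again}, \eqref{defDttt'} to rewrite mixed matrix products; the signs and the placement of $e^{tB}$ versus $e^{-tB}$ need close attention.

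The main obstacle is bookkeeping rather than any conceptual difficulty: it is making the four-term product-rule expansion of $\dot\Psi_t$ collapse exactly into the stated $II_t + III_t + IV_t + V_t$, which requires using the matrix identities relating $D_t$, $Q_t$, $e^{tB}$ consistently and being careful that cross terms cancel (for instance, the contribution from differentiating $D_t$ must be merged with part of the $w_t$-differentiation to produce the clean bilinear form $V_t$ in $B^* Q_\infty^{-1}D_{-t}u$ and $D_{-t}u - x$). As a sanity check I would verify that \eqref{R} and \eqref{P} agree by substituting $u - D_t x = e^{tB}(D_{-t}u - x) + (D_t - e^{tB})(\ldots)$ — more precisely using $D_t(D_{-t}u - x) = u - D_t x$ together with \eqref{defDttt'} — and also verify the special case $Q = I$, $B = -I$ against the classical Mehler kernel, where both expressions must reduce to the known derivative. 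Alternatively, and perhaps more economically, once \eqref{R} is established one can derive \eqref{P} purely algebraically from \eqref{R} by substituting $u - D_t x = D_t(D_{-t}u - x)$ and expanding, avoiding a second differentiation altogether; I would present whichever of the two routes produces the shorter verification.
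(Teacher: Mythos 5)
Your proposal is correct and follows essentially the same route as the paper: both expressions for $N_t$ are obtained by differentiating \eqref{defKRt} (for \eqref{R}) and \eqref{defKRt1} (for \eqref{P}) in $t$ and invoking Lemma \ref{derivatives}, and your computation of \eqref{R} is complete and accurate. The one refinement worth taking from the paper, which disposes of the four-factor bookkeeping you identify as the main obstacle in \eqref{P}, is to substitute \eqref{defDttt'} into the exponent of \eqref{defKRt1} \emph{before} differentiating: the exponent then splits into $-\tfrac12\langle Q_t^{-1}e^{tB}w_t, e^{tB}w_t\rangle$ (whose derivative yields $II_t+III_t+IV_t$) and $-\tfrac12\langle w_t, Q_\infty^{-1}w_t\rangle$ after the cancellation $Q_t^{-1}Q_t=I$, and the $t$-derivative of the latter is exactly $V_t$.
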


\begin{proof}
 Differentiating \eqref{defKRt} with respect to $t$ and applying Lemma \ref{derivatives},
one obtains
\begin{multline*}
\dot K_t (x,u)
= \\
K_t (x,u)
\Big[
-\frac12\,{\tr \big(Q_t^{-1}\, e^{tB}\, Q\, e^{tB^*} \big)}
+\frac12
\left\langle Q_t^{-1}\, e^{tB}\, Q\, e^{tB^*}\,Q_t^{-1} (u-D_t\, x) \,,\, u-D_t\, x\right\rangle \\
-\left\langle (Q_t^{-1}-Q_\infty^{-1})\,Q_\infty \,B^*\, Q_\infty^{-1}\, D_t\, x\,,\,
(u-D_t\, x)\right\rangle
\Big],
\end{multline*}
from which \eqref{R} follows.

Next, we differentiate  \eqref{defKRt1},  applying   \eqref{Jacobi}\ to the first factor, and then use  \eqref{defDttt'} to rewrite the matrix $D_t$ in the exponent. The result will be
\begin{align} \label{second-expr-Kt}  %
\dot K_t (x,u)  \notag
& =  K_t (x,u)
\Big\{
-\frac12\,{\tr \big(Q_t^{-1}\, e^{tB}\, Q\, e^{tB^*}\big)}\\
+\frac{d}{dt} & \Big[
{-\frac12
\left\langle Q_t^{-1}\,
e^{tB}\,  ( D_{-t}\,u-x) \,,\,  (e^{tB} + Q_t\, e^{-tB^*}\,Q_\infty^{-1}) ( D_{-t}\,u-x)\right\rangle}\Big]\Big\}.
\end{align}
The derivative here  consists of two terms, the first term being
     \begin{align*}
     \frac{d}{dt}& \Big[-\frac12 \left\langle Q_t^{-1}\, e^{tB}\, ( D_{-t}\,u-x) \,,\,  e^{tB} \, (D_{-t}\,u-x) \right\rangle\Big] \notag
       \\&= \frac12\,\big\langle  Q_t^{-1} \,e^{tB}\, Q\, e^{tB^*} \,Q_t^{-1}\, e^{tB}\,  ( D_{-t}\,u-x) \,,\,  e^{tB}\,  ( D_{-t}\,u-x)\big\rangle  \notag
      \\&-\, \big\langle Q_t^{-1}\, B\, e^{tB}\,  ( D_{-t}\,u-x) \,,\,  e^{tB}\,  ( D_{-t}\,u-x)\big\rangle \notag
    \\ &-\, \big\langle Q_t^{-1}\, e^{tB}  \,Q_\infty\, B^*\,  Q_\infty^{-1}\,  D_{-t}\,u \,,\,  e^{tB}\,(D_{-t}\,u-x)\big\rangle,                               
     \end{align*}
 where we applied \eqref{8} and    \eqref{derd-t} with $t$ replaced by $-t$. Notice that we have arrived at the terms $II_t$, $III_t$ and $IV_t$ in \eqref{P}.

 In the second term coming from the derivative in \eqref{second-expr-Kt}, we observe some cancellation;
 the term equals
  \begin{equation*}
      \frac{d}{dt} \Big[
{-\frac12
\left\langle
D_{-t}\,u-x \,,\,   Q_\infty^{-1} \,( D_{-t}\,u-x)\right\rangle}\Big] =
-
\big\langle
D_{-t}\,u-x \,,\,   B^* \, Q_\infty^{-1}\, D_{-t}\,u\big)
\big\rangle = V_t(x,u),
  \end{equation*}
  where we used again  \eqref{derd-t}. Summing up, we obtain  \eqref{P}, and the lemma is proved.         \end{proof}

\begin{lemma}\label{lemma-stime-Pt}
 Let $x,u\in\R^n$.
Then for $0< t\leq1$
\begin{align}\label{R1}
|N_t (x,u)|
\lesssim
\frac{1}{t}
+\frac{\left|u-D_t \,x\right|^2}{t^2}
+ |x|\,\frac{|u-D_t\, x|}t
\end{align}
and for $t\geq1$
\begin{align}\label{P1}
|N_t (x,u)|
\lesssim
|D_{-t}\,u- x|\,|D_{-t}\,u|+
e^{-ct}\,
|D_{-t}\,u- x|^2 +e^{-ct}.           
\end{align}
\end{lemma}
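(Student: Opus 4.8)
The plan is to estimate the two expressions for $N_t(x,u)$ from Lemma~\ref{derivate-nucleo} term by term, using \eqref{R} in the range $0<t\le 1$ and \eqref{P} in the range $t\ge 1$, and plugging in the matrix estimates collected in Section~\ref{preliminaries}. The only genuinely active ingredients are: the norm bounds \eqref{stimaQt-1} and \eqref{stime Q} for $Q_t^{-1}$ and $Q_t^{-1}-Q_\infty^{-1}$; the exponential growth/decay estimates \eqref{est:2-eBs-v}--\eqref{est:2-esBs-v} for $D_{\pm t}$, $e^{\pm tB}$, $e^{\pm tB^*}$; and the fact that $Q$, $Q_\infty$, $B$ are fixed matrices, so $\|Q\|$, $\|Q_\infty\|$, $\|Q_\infty^{-1}\|$, $\|B\|$, $\|Q^{1/2}\|$ are all $\lesssim 1$ and $|\cdot|_Q\simeq|\cdot|$.

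For $0<t\le 1$ I would work from \eqref{R}. The trace term: since $\|e^{tB}\|\lesssim 1$ and $\|e^{tB^*}\|\lesssim 1$ for $0<t\le 1$, while $\|Q_t^{-1}\|\simeq t^{-1}$ by \eqref{stimaQt-1}, one gets $|I_t|=\tfrac12|\tr(Q_t^{-1}e^{tB}Qe^{tB^*})|\lesssim t^{-1}$, the first term on the right of \eqref{R1}. The quadratic term: $\big|Q^{1/2}e^{tB^*}Q_t^{-1}(u-D_tx)\big|\lesssim \|Q_t^{-1}\|\,|u-D_tx|\lesssim t^{-1}|u-D_tx|$, so its square contributes $t^{-2}|u-D_tx|^2$, the second term of \eqref{R1}. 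The bilinear term: estimate $\big\langle Q_\infty B^* Q_\infty^{-1}D_tx,(Q_t^{-1}-Q_\infty^{-1})(u-D_tx)\big\rangle$ by Cauchy--Schwarz; here $|D_tx|\lesssim e^{Ct}|x|\lesssim|x|$ for $t\le 1$, and $\|Q_t^{-1}-Q_\infty^{-1}\|\lesssim t^{-1}e^{-ct}\lesssim t^{-1}$ by \eqref{stime Q}, giving $\lesssim |x|\,t^{-1}|u-D_tx|$, the third term of \eqref{R1}. Summing the three yields \eqref{R1}.

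For $t\ge 1$ I would use the five-term decomposition \eqref{P}. In this range $\|e^{tB}\|\lesssim e^{-ct}$ and $\|e^{-tB^*}\|$... no: one has $\|e^{tB}\|\lesssim e^{-ct}$, $\|e^{tB^*}\|\lesssim e^{-ct}$, $\|D_t\|\lesssim e^{-ct}$ (from the upper bounds on $|e^{tB}x|$, $|e^{tB^*}x|$, $|D_{-t}x|$ in \eqref{est:2-eBs-v}--\eqref{est:2-esBs-v} read with the appropriate sign), while $\|Q_t^{-1}\|\lesssim 1$ for $t\ge 1$. Then: $|I_t|=\tfrac12|\tr(Q_t^{-1}e^{tB}Qe^{tB^*})|\lesssim e^{-ct}$, contributing to the last term of \eqref{P1}. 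In $II_t$, the factor $\big|Q^{1/2}e^{tB^*}Q_t^{-1}e^{tB}(D_{-t}u-x)\big|\lesssim e^{-ct}|D_{-t}u-x|$, so $|II_t|\lesssim e^{-ct}|D_{-t}u-x|^2$, the middle term of \eqref{P1}. In $III_t$, two factors $e^{tB}(D_{-t}u-x)$ each cost $e^{-ct}$, so $|III_t|\lesssim e^{-ct}|D_{-t}u-x|^2$ as well. In $IV_t$, one factor $e^{tB}(D_{-t}u-x)$ costs $e^{-ct}$, and the other factor $Q_t^{-1}e^{tB}Q_\infty B^*Q_\infty^{-1}D_{-t}u$ carries another $e^{-ct}$ from the $e^{tB}$ but also $|D_{-t}u|$; so $|IV_t|\lesssim e^{-2ct}|D_{-t}u|\,|D_{-t}u-x|$, which is $\lesssim |D_{-t}u-x|\,|D_{-t}u|$ (first term of \eqref{P1}), and in fact much smaller. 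Finally $V_t=-\langle B^*Q_\infty^{-1}D_{-t}u,\,D_{-t}u-x\rangle$ has no decaying matrix factor; Cauchy--Schwarz gives directly $|V_t|\lesssim |D_{-t}u|\,|D_{-t}u-x|$, exactly the first term of \eqref{P1}. Adding the five bounds gives \eqref{P1}.

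The estimates are all routine applications of Cauchy--Schwarz together with the recorded norm bounds; there is no serious obstacle. The only point requiring a little care is bookkeeping of which exponential factors ($e^{ct}$ versus $e^{-ct}$, and how many) attach to each occurrence of $e^{tB}$, $e^{tB^*}$, $D_t$, $D_{-t}$ in the two ranges of $t$, and noticing in the $t\ge 1$ case that $III_t$ and $IV_t$ are in fact absorbed by the two explicitly displayed terms rather than needing their own — so that the bound \eqref{P1} holds with its three stated terms and not more. I would also remark that in the $t\le 1$ case one does \emph{not} try to exploit the decay $e^{-ct}$ in \eqref{stime Q}, since there $t^{-1}$ is the relevant size; the decay is only used for $t\ge 1$, where it gives the (stronger than needed) estimate on $I_t$.
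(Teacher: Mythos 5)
Your proposal is correct and follows essentially the same route as the paper: for $0<t\le 1$ the paper likewise derives \eqref{R1} directly from \eqref{R} via \eqref{stimaQt-1} and \eqref{stime Q}, and for $t\ge 1$ it estimates the five terms $I_t,\dots,V_t$ of \eqref{P} exactly as you do, with the same matrix norm bounds and the same conclusion that $III_t$ is absorbed into the $e^{-ct}|D_{-t}u-x|^2$ term while $IV_t$ and $V_t$ are absorbed into $|D_{-t}u-x|\,|D_{-t}u|$.
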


\begin{proof}
For $0< t\leq1$,  \eqref{R1} follows from  \eqref{R}, by means of \eqref{stimaQt-1}
and \eqref{stime Q}.

When $t\geq1$ we get,
starting from \eqref{P}
and using \eqref{est:2-esBs-v} and \eqref{stimaQt-1},\
\begin{align*}
|I_t| =
\frac{1}{2} \, \Big| \tr (Q_t^{-1} \, e^{tB}\, Q\, e^{tB^*}) \Big|
\lesssim
e^{-ct}.
\end{align*}
Similarly, we have
\begin{align*}
|II_t(x,u)| &=
\frac12 \, \Big|
Q^{1/2}\, e^{tB^*}\,Q_t^{-1}\,
e^{tB}\,
(D_{-t}\,u- x)
\Big|^2
\lesssim
e^{-ct}\,|D_{-t}\,u- x|^2,
\end{align*}
and also
\begin{align*}
|III_t(x,u)|            
\lesssim
e^{-ct} \,\left|D_{-t}\,u- x\right|^2.
\end{align*}
Proceeding as above, we further obtain                      
\begin{align*}
|IV_t(x,u)|+|V_t(x,u)|
&\lesssim |D_{-t}u-x|\,|D_{-t}u|,
\end{align*}
and \eqref{P1} is proved.
\end{proof}

\section{On the multiplier kernel}\label{parts}

In this section, we estimate some parts of the multiplier kernel and verify their relevance for the corresponding parts of the operator.
We also state some facts that will simplify the proofs to come.

\subsection{Estimates of kernels}
~
Without loss of generality, it will be assumed from now on that
\[\|\varphi\|_\infty \le 1.\]
We first  invoke a lemma from
  \cite[Lemma 5.1 and Remark 5.5]{CCS3}.
  \begin{lemma}\label{preliminary-t-large-not-kernel}
Let $\delta > 0$. For ${\sigma} \in\{1,2,3\}$ and $x,u\in\R^n$, one has
\begin{equation}
 \int_1^{+\infty}\exp \Big({-\delta\left|  D_{-t}\,u- x
\right|^2 }\Big)\big|   D_{-t} \,  u\big|^\sigma\, dt \lesssim
1+|x|^{\sigma-1},\label{stima-5-inf}
\end{equation}
where the implicit constant  may depend on $\delta$, in addition to $n$, $Q$ and $B$.
\end{lemma}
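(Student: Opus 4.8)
The statement to be proved is Lemma~\ref{preliminary-t-large-not-kernel}: for $\delta>0$, $\sigma\in\{1,2,3\}$ and $x,u\in\R^n$,
\[
\int_1^{+\infty}\exp\Big(-\delta\,|D_{-t}\,u-x|^2\Big)\,|D_{-t}\,u|^\sigma\,dt \lesssim 1+|x|^{\sigma-1}.
\]
The essential idea is to change variables from $t$ to a new parameter measuring the ``size'' of $D_{-t}u$, and exploit the exponential decay and growth bounds \eqref{est:2-eBs-v} on the group $D_{-t}$. Concretely, by \eqref{est:2-eBs-v} we have $e^{-Ct}|u|\lesssim|D_{-t}u|\lesssim e^{-ct}|u|$, so $|D_{-t}u|$ decays essentially exponentially in $t$; in particular the map $t\mapsto |D_{-t}u|^2$ is, up to constants, comparable to $|u|^2 e^{-2ct}$ and strictly decreasing in the relevant sense.

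First I would dispose of the trivial case $u=0$. Then, fixing $u\neq0$, I would introduce polar coordinates adapted to $D$: write (as in \eqref{def-coord}) $u = D_{s_0}\tilde u$ with $\tilde u \in \ellipses_\beta$ (say $\beta=1$) and $s_0\in\R$, so that $D_{-t}u = D_{s_0-t}\tilde u$. Substituting $s = t - s_0$, the integral becomes $\int_{1-s_0}^{+\infty}\exp(-\delta|D_{-s}\tilde u - x|^2)\,|D_{-s}\tilde u|^\sigma\,ds$, and now $\tilde u$ ranges over the compact ellipsoid $\ellipses_1$, on which $|D_{-s}\tilde u|\simeq |D_{-s}\tilde u_0|$ uniformly for a fixed reference point — more usefully, on $\ellipses_1$ the quantity $R(D_{-s}\tilde u)$ is comparable to a fixed exponential $e^{2cs}$-type profile. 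The key geometric point is that along the ``ray'' $s\mapsto D_{-s}\tilde u$, the value $|D_{-s}\tilde u|$ sweeps monotonically (up to constants) through $(0,\infty)$, so I can further substitute $\rho = |D_{-s}\tilde u|$ (or $\rho^2 = |D_{-s}\tilde u|^2$). From \eqref{est:2-eBs-v}, $\rho \simeq e^{-cs}$, hence $d s \simeq d\rho/\rho$ with comparability constants independent of $\tilde u$ and $x$. The integral is then bounded by $\int_0^{C}\exp(-\delta|\,\cdot\,-x|^2)\,\rho^{\sigma}\,\frac{d\rho}{\rho} = \int_0^{C}\exp(-\delta\,\mathrm{dist}(\cdot,x)^2)\,\rho^{\sigma-1}\,d\rho$, where the argument of the exponential is the squared distance from $x$ to the point on the curve at ``radius'' $\rho$.

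To finish, I would split the $\rho$-integral at $\rho = 2|x|_Q$ (using the norm $|\cdot|_Q$ from \eqref{def:R_normaQ}, equivalent to $|\cdot|$). For $\rho \le 2|x|_Q$, drop the exponential factor and bound $\int_0^{2|x|_Q}\rho^{\sigma-1}\,d\rho \lesssim |x|^\sigma$ when $\sigma\ge1$ — wait, this overshoots by one power, so instead one must keep a bit of the exponential: on this range the point $D_{-s}\tilde u$ has $|D_{-s}\tilde u|\simeq\rho\le 2|x|_Q$, and a cruder but sufficient bound comes from noting $|D_{-s}\tilde u|^\sigma \lesssim |D_{-s}\tilde u|\cdot(2|x|_Q)^{\sigma-1}$, reducing to the case $\sigma=1$ times $|x|^{\sigma-1}$; the $\sigma=1$ integral $\int_0^{C}\exp(-\delta\,\mathrm{dist}^2)\,d\rho$ is bounded by a constant since the integrand is at most $1$ on an interval of length $\lesssim 1$. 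For $\rho \ge 2|x|_Q$, one has $|D_{-s}\tilde u - x|_Q \ge |D_{-s}\tilde u|_Q - |x|_Q \gtrsim \rho$, so $\exp(-\delta|D_{-s}\tilde u - x|^2)\lesssim \exp(-c\rho^2)$, and $\int_{0}^{\infty}e^{-c\rho^2}\rho^{\sigma-1}\,d\rho < \infty$. Combining the two ranges yields the bound $1 + |x|^{\sigma-1}$.

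\textbf{Main obstacle.} The delicate point is making the change of variables $t\mapsto \rho=|D_{-t}u|$ rigorous with constants uniform in $\tilde u\in\ellipses_1$ and in $x$: strictly speaking $|D_{-t}u|$ need not be monotone (only comparable to the monotone quantity $e^{-ct}|u|$), so one should instead change variables to $\rho = e^{-ct}$ directly, or parametrize by $R(D_{-t}\tilde u)$ using Lemma~\ref{differ}-type comparisons, and only afterwards compare $|D_{-t}\tilde u|$ with $\rho$. Carefully controlling the Jacobian $ds\simeq d\rho/\rho$ and ensuring the ``triangle inequality'' step $|D_{-s}\tilde u - x|\gtrsim\rho$ for $\rho\gtrsim|x|$ holds uniformly — both of which lean only on the equivalence of norms and on \eqref{est:2-eBs-v} — is where the bookkeeping must be done with care, but no new idea is required.
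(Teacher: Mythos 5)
This lemma is not proved in the present paper --- it is quoted from \cite[Lemma 5.1 and Remark 5.5]{CCS3} --- so your argument has to stand on its own. Its architecture is the right one: convert $|D_{-t}u|\,dt$ into a radial measure $dr$ with $r\simeq|D_{-t}u|$, split at $r\simeq|x|$, use the exponential decay for $r\gtrsim|x|$, and for $r\lesssim|x|$ argue that the curve spends only an $O(1)$ amount of $r$-parameter near $x$. But there is a genuine gap exactly at the point you flag as the ``main obstacle'', and of the two fixes you propose, one works and one does not. The substitution that works is $r(t)=\sqrt{R(D_{-t}u)}=|D_{-t}u|_Q/\sqrt2$: by \cite[formula (4.3)]{CCS2} (invoked in the Appendix of this paper) one has $\partial_t R(D_{-t}u)\simeq-|D_{-t}u|^2$, so $r$ is strictly decreasing with $-dr\simeq|D_{-t}u|\,dt$ and $r\simeq|D_{-t}u|$. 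The substitution $\rho=e^{-ct}$ does not work: \eqref{est:2-eBs-v} only sandwiches $|D_{-t}u|$ between $e^{-Ct}|u|$ and $e^{-ct}|u|$ with two different exponents, so $|D_{-t}u|$ is trapped between two distinct powers of $\rho$ rather than being comparable to $\rho$.

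The deeper issue is that even a correct two-sided comparability $r\simeq|v(r)|$ is not enough for your near-region step, where you assert $\int_{\{r\lesssim|x|\}}e^{-\delta|v(r)-x|^2}\,dr\lesssim1$ because ``the integrand is at most $1$ on an interval of length $\lesssim1$''. Comparability alone only shows that $\{r:|v(r)-x|\le\lambda\}$ forces $|v(r)|\in[|x|-\lambda,|x|+\lambda]$ and hence $r\in[c(|x|-\lambda),C(|x|+\lambda)]$, an interval of length $O(|x|)$; that would give the contribution $|x|^{\sigma-1}\cdot|x|=|x|^\sigma$, which is too big by one power. What rescues the estimate is the \emph{exact} choice $r=|v|_Q/\sqrt2$, for which the triangle inequality in the norm $|\cdot|_Q$ gives the Lipschitz relation $\bigl|r-|x|_Q/\sqrt2\bigr|\le|v(r)-x|_Q/\sqrt2\lesssim|v(r)-x|$, so that $\{r:|v(r)-x|\le\lambda\}$ lies in an interval of length $O(\lambda)$; a layer-cake integration then yields $\int e^{-\delta|v(r)-x|^2}\,dr\lesssim\int_0^1\sqrt{\log(1/s)}\,ds\lesssim1$. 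With this ingredient supplied (and the upper limit of your $\rho$-integral corrected from a constant to $r(1)$, which may be large but is harmless since your far-region bound already covers all $\rho\ge2|x|_Q$), the proof closes; without it, the crucial step is unjustified.
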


\begin{proposition}\label{Meps}

 \noindent  {\it{(i)}} The integral   \eqref{eps}        defining $\mathcal M_{\varepsilon}$ converges absolutely
  for any $\varepsilon > 0$ and all $x,u \in \R^n$.
  Moreover,
  \begin{equation}\label{M-1}
|\mathcal M_1(x,u)| \lesssim e^{R(x)}, \qquad x,u \in \R^n,
\end{equation}
and for $0 < \varepsilon < 1$
 \begin{equation}\label{M-eps}
|\mathcal M_{\varepsilon}(x,u)| \lesssim  \varepsilon^{-C}\, e^{R(x)}\,(1+|x|), \qquad x,u \in \R^n.
\end{equation}
\noindent {\it{(ii)}} For any $\varepsilon > 0$, any $f \in  L^2_0(\gamma_\infty)$ and a.a.\  $x \in \R^n$,
\begin{equation}\label{m-eps}
m_{\varepsilon}(T)f(x) = \int \mathcal M_{\varepsilon}(x,u)\,f(u)\,d\gamma_\infty(u).
\end{equation}
\end{proposition}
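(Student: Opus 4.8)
\textbf{Proof plan for Proposition~\ref{Meps}.}

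The plan is to prove part~(i) first, by estimating the integrand $\varphi(t)\,\dot K_t(x,u) = \varphi(t)\,K_t(x,u)\,N_t(x,u)$ separately on the ranges $0<t\le 1$ and $t\ge 1$, and then to deduce part~(ii) from part~(i) together with Proposition~\ref{restr}. For the range $t\ge 1$ I would combine the upper bound for $K_t$ in \eqref{tstort} with the bound \eqref{P1} for $|N_t|$. This gives an integrand controlled by $e^{R(x)}\exp\big(-c|D_{-t}u-x|_Q^2\big)$ times a polynomial expression in $|D_{-t}u-x|$ and $|D_{-t}u|$ (with possibly an extra $e^{-ct}$ factor); using $|x|_Q\simeq|x|$ and applying Lemma~\ref{preliminary-t-large-not-kernel} with $\sigma\in\{1,2,3\}$, the $t$-integral over $(1,\infty)$ is bounded by $C\,e^{R(x)}(1+|x|)$. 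Taking $\varepsilon=1$ and dropping the $(1+|x|)$ when only $t\ge 1$ is present (since then $III_t,IV_t,V_t$ can be absorbed) — more carefully, observing that the worst term $|D_{-t}u-x||D_{-t}u|$ still yields only $1+|x|^{0}=1$ after one more look at \eqref{stima-5-inf} with $\sigma=1$, wait: $|D_{-t}u-x|\,|D_{-t}u|\le \tfrac12|D_{-t}u-x|^2+\tfrac12|D_{-t}u|^2$, and the Gaussian factor absorbs the square while $\int_1^\infty \exp(-c|D_{-t}u-x|^2)|D_{-t}u|^2\,dt\lesssim 1+|x|$ — so in fact one gets \eqref{M-1} with the clean bound $e^{R(x)}$, as claimed, because for the part with $t\ge1$ one may also use the rapidly decaying $e^{-ct}$ in the remaining terms. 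For the range $\varepsilon\le t\le 1$ (relevant only when $\varepsilon<1$), I would use \eqref{litet} for $K_t$ and \eqref{R1} for $|N_t|$; the integrand is then at most
\[
\frac{e^{R(x)}}{t^{n/2}}\exp\Big(-c\,\frac{|u-D_tx|^2}{t}\Big)\Big(\frac1t+\frac{|u-D_tx|^2}{t^2}+|x|\,\frac{|u-D_tx|}{t}\Big).
\]
The Gaussian in $|u-D_tx|^2/t$ dominates the polynomial factors in $|u-D_tx|/\sqrt t$, leaving an integrand $\lesssim e^{R(x)}(1+|x|)\,t^{-n/2-1}$, whose integral over $[\varepsilon,1]$ is $\lesssim \varepsilon^{-n/2}\,e^{R(x)}(1+|x|)$; this yields \eqref{M-eps} with $C=n/2$ (any fixed exponent suffices). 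Absolute convergence of \eqref{eps} for each fixed $\varepsilon>0$ and all $x,u$ is immediate from these two estimates.

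For part~(ii), the idea is that $m_\varepsilon$ is again of Laplace transform type, now with the bounded density $\varphi\cdot\mathbf 1_{(\varepsilon,\infty)}$, so Proposition~\ref{restr} (applied with this truncated $\varphi$) identifies $m_\varepsilon(T)$ on each generalized eigenspace $\mathcal E_\lambda$ with $-\int_\varepsilon^\infty\varphi(t)\,\tfrac{d}{dt}e^{-tT}\,dt$. Since $\mathcal H_t=e^{-tT}$ has integral kernel $K_t(x,u)$ against $d\gamma_\infty$, and since $\partial_t K_t$ is, by part~(i), absolutely integrable in $t$ on $(\varepsilon,\infty)$ with an $L^1(d\gamma_\infty(u))$-bound in $u$ that is locally uniform in $x$ (the factor $e^{R(x)}(1+|x|)$ is killed by the Gaussian weight when we also integrate a fixed $f\in L^2_0\subset L^1$ against it, and Fubini applies), one may differentiate under the integral sign and interchange the $t$- and $u$-integrations to get, for $f$ in the dense span of the $\mathcal E_\lambda$,
\[
m_\varepsilon(T)f(x)=-\int_\varepsilon^\infty\varphi(t)\,\partial_t\!\!\int K_t(x,u)f(u)\,d\gamma_\infty(u)\,dt=\int\mathcal M_\varepsilon(x,u)f(u)\,d\gamma_\infty(u).
\]
Both sides are bounded operators on $L^2_0(\gamma_\infty)$ — the left by \cite[Lemma 3.7]{Carbonaro-Oliver} (or by \cite[Prop. 3.8]{Carbonaro-Oliver}), the right because the kernel bound \eqref{M-eps} together with the Gaussian decay shows it maps $L^2(\gamma_\infty)$ boundedly into $L^2_{\mathrm{loc}}$, which is enough to pass to the limit on the dense subspace — so the identity extends to all $f\in L^2_0(\gamma_\infty)$ and a.e.\ $x$.

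The main obstacle I anticipate is the bookkeeping in part~(i): checking that the polynomial growth in $|u-D_tx|$ (resp.\ $|D_{-t}u-x|$ and $|D_{-t}u|$) coming from $N_t$ is genuinely absorbed by the Gaussian factors in $K_t$ uniformly down to $t=\varepsilon$, and tracking the resulting power of $\varepsilon$ and the $(1+|x|)$ factor — in particular being careful that the cross term $|D_{-t}u-x|\,|D_{-t}u|$ in \eqref{P1}, which is the slowest-decaying, still produces only the clean bound $e^{R(x)}$ in \eqref{M-1} rather than $e^{R(x)}(1+|x|)$. The latter hinges on splitting this term via Young's inequality and invoking Lemma~\ref{preliminary-t-large-not-kernel} with the right value of $\sigma$, together with the observation that the genuinely $|x|$-producing contributions to $\dot K_t$ for $t\ge 1$ carry an extra $e^{-ct}$, so that \eqref{stima-5-inf} with $\sigma=1$ (giving a bound independent of $x$) is what applies. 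A secondary technical point is justifying the differentiation under the integral and the Fubini interchange in part~(ii), for which the absolute-convergence estimates from part~(i) — now multiplied against a test function and the Gaussian weight — are exactly what is needed.
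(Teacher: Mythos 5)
Your overall architecture (small-$t$ and large-$t$ kernel bounds via \eqref{litet}--\eqref{R1} and \eqref{tstort}--\eqref{P1}, then part (ii) via Proposition \ref{restr}, differentiation under the integral sign, Fubini, and a density-plus-boundedness extension) is exactly the paper's, and the small-$t$ estimate and part (ii) are sound as sketched. But there is a genuine flaw in the one step you yourself flag as delicate: the treatment of the cross term $|D_{-t}u-x|\,|D_{-t}u|$ in \eqref{P1} for the clean bound \eqref{M-1}. The Young split $|D_{-t}u-x|\,|D_{-t}u|\le \tfrac12|D_{-t}u-x|^2+\tfrac12|D_{-t}u|^2$ destroys the estimate on both halves. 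The half $|D_{-t}u|^2$ forces Lemma \ref{preliminary-t-large-not-kernel} with $\sigma=2$, which yields $1+|x|$, not a constant, so you would only recover \eqref{M-eps}-type growth rather than \eqref{M-1}. Worse, the half $|D_{-t}u-x|^2$, once absorbed into the Gaussian, leaves $\int_1^\infty \exp\bigl(-c\,|D_{-t}u-x|^2\bigr)\,dt$ with no factor of $|D_{-t}u|$ at all; this integral is not uniformly bounded in $u$ and actually diverges at $u=0$ (where $D_{-t}u\equiv 0$ and the integrand is the constant $e^{-c|x|^2}$). That case is not covered by the lemma, whose hypothesis $\sigma\ge 1$ is precisely what saves convergence.

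The correct move — which the paper makes, and which your final sentence names ($\sigma=1$) without supplying a mechanism that produces it — is not to split the product at all. Keep $|D_{-t}u|$ to the first power and absorb only the factor $|D_{-t}u-x|$ into the exponential, i.e.\ use $|D_{-t}u-x|\exp\bigl(-\tfrac12|D_{-t}u-x|_Q^2\bigr)\lesssim \exp\bigl(-c\,|D_{-t}u-x|^2\bigr)$ with a slightly reduced constant $c$. This leaves exactly one power of $|D_{-t}u|$, so Lemma \ref{preliminary-t-large-not-kernel} with $\sigma=1$ applies and gives the bound $1+|x|^{0}\simeq 1$, hence \eqref{M-1} without any $(1+|x|)$ factor; the remaining terms of \eqref{P1} carry $e^{-ct}$ and are harmless after the same absorption. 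With this repair the rest of your argument goes through as in the paper.
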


\begin{proof}
Aiming at {\it{(i)}},
we  begin by estimating the kernel $\dot K_t (x,u) = K_t (x,u)N_t (x,u)$. For $1 < t < +\infty$ we use   \eqref{tstort}    and   \eqref{P1}. Then we can neglect the factors $|  D_{-t}\,u- x|$ in $N_t (x,u)$ by also reducing slightly the positive coefficient in front of the same factor in the exponent in \eqref{tstort}.
As a result,
\begin{equation}\label{dotK1}
|\dot K_t (x,u)| \lesssim
 e^{R(x)}\,\exp{\left(-c\,|  D_{-t}\,u- x|^2\right)}\,(|D_{-t}\,u| + e^{-ct}), \qquad t>1.
\end{equation}
Lemma \ref{preliminary-t-large-not-kernel} now implies \eqref{M-1}.

For $0<t<1$ we use instead
  \eqref{litet}    and   \eqref{R1},
  and now we can neglect all powers of $|u-  D_{t}\,x|^2/t$ in  $N_t (x,u)$.
  This leads to
  \begin{equation}\label{dotKeps}
|\dot K_t (x,u)| \lesssim e^{R(x)}\,t^{-n/2}\,\exp[\left(-c\,\frac{|u-  D_{t}\,x|^2}t \right)]\,
\left(t^{-1}+|x| t^{-1/2}\right) \lesssim  e^{R(x)}\,(1+|x|)\,t^{-n/2-1}.
\end{equation}
Integrating  over  $\varepsilon < t < 1$ and combining the result with
  \eqref{M-1}, we arrive at  \eqref{M-eps}. The claimed convergence is now clear, so  {\it{(i)}}  is verified.

\vskip3pt

For item {\it{(ii)}}, we need  the following lemma.

\begin{lemma}\label{derint}
  Let $f \in  L^2_0(\gamma_\infty)$ and $x \in \R^n$. Then for any $t>0$
  \begin{equation}\label{intder}
  \partial_t\,\int K_t(x,u)\,f(u)\,d\gamma_\infty(u) = \int \dot K_t(x,u)\,f(u)\,d\gamma_\infty(u).
  \end{equation}
\end{lemma}
\begin{proof}
Given  $t>0$, we replace $t$ by $\tau$
in the right-hand side of \eqref{intder},  and then integrate $d\tau$ over the interval
$t_0<\tau<t$, for some fixed $t_0 \in (0,t)$. If we can swap the order of integration in the resulting double integral, we will get
\begin{align*}
\int_{t_0}^{t} \int \dot K_\tau(x,u)\,f(u)\,d\gamma_\infty(u)\,d\tau = \int (K_t(x,u) -K_{t_0}(x,u)))\,f(u)\,d\gamma_\infty(u).
\end{align*}
Differentiating this equation with respect to $t$, we then obtain the lemma.

To see that Fubini's theorem justifies this swap, it is enough to verify that
\begin{align} \label{fubini}
 \sup_{u \in \mathbb{R}^n}  \int_{\varepsilon}^{\infty}  |\dot K_\tau(x,u)|\,d\tau < \infty
\end{align}
for each $x$ and each $\varepsilon>0$. But this follows from  \eqref{dotKeps} for the integral over $\varepsilon < \tau < 1$ and from \eqref{dotK1} and Lemma \ref{preliminary-t-large-not-kernel}  for that over $\tau > 1$.
The lemma is proved.
\end{proof}

To verify item {\it{(ii)}} in the proposition, we  observe that
    because of \eqref{M-eps},
    the right-hand side of \eqref{m-eps} defines for each $x$ a functional on  $L^2_0(\gamma_\infty)$, whose norm is locally uniformly bounded for $x \in \R^n$. Further, the operator $m_{\varepsilon}(T)$ is bounded on the same space (see \cite[Lemma 3.7]{Carbonaro-Oliver}). Since the generalized eigenspaces $\mathcal E_\lambda$ together span $L^2_0(\gamma_\infty)$,
      it is enough to verify \eqref{m-eps} on each $\mathcal E_\lambda$.

So let  $f \in \mathcal E_\lambda$ for some $\lambda$. Since $e^{-tT}\,f(x) = \int K_t(x,u)f(u)\,d\gamma_\infty(u)$,
Proposition~\ref{restr} and  Lemma~\ref{derint} imply
 \begin{align*}
 m_\varepsilon(T)f(x) &= -\int_\varepsilon^\infty  \varphi(t)\,\partial_t \int K_t(x,u)f(u)\,d\gamma_\infty(u)\,dt\\
   &= -\int_\varepsilon^\infty  \varphi(t) \int \dot K_t(x,u)f(u)\,d\gamma_\infty(u)\,dt.
  \end{align*}
  Switching the order of integration,
   again by means of \eqref{fubini},
    we conclude the proof of {\it{(ii)}}.
\end{proof}

\begin{proposition}\label{conv0'}
 \noindent  {\it{(i)}}   The integral   \eqref{mnoll}        defining $\mathcal M_0$ converges for all $x\ne u$, and
  \begin{equation}\label{M-00'}
\left|\mathcal M_0(x,u) \right|\lesssim e^{R(x)} \,(1+|x|)^C\, |x-u|^{-C},
      \qquad                      x \ne u.
\end{equation}
 \noindent  {\it{(ii)}}  For any $f \in  L^2_0(\gamma_\infty)$ and a.a.  $x \notin \mathrm{supp}\,f$,
\begin{equation}\label{m-0'}
m_0(T)f(x) = \int \mathcal M_0(x,u)\,f(u)\,d\gamma_\infty(u).
\end{equation}
\end{proposition}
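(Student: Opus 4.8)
\textbf{Proof plan for Proposition \ref{conv0'}.}

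The plan is to mirror the structure of the proof of Proposition \ref{Meps}, adapting the two ingredients to the singular regime $0<t<1$. For part \emph{(i)}, I would start from the pointwise bound on the time derivative of the Mehler kernel obtained by combining \eqref{litet} with \eqref{R1}: namely, for $0<t<1$,
\begin{equation*}
|\dot K_t(x,u)| \lesssim e^{R(x)}\,t^{-n/2}\exp\!\left(-c\,\frac{|u-D_t\,x|^2}{t}\right)\left(\frac1t + |x|\,t^{-1/2} + \frac{|u-D_t\,x|^2}{t^2}\right),
\end{equation*}
where I keep the last summand rather than absorbing it, because on the diagonal the Gaussian factor does not help. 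The key quantitative point is that $|u-D_t\,x| \gtrsim |x-u| - |x - D_t\,x| \gtrsim |x-u| - Ct\,|x|$ by Lemma \ref{differ}; so for $t$ smaller than a suitable multiple of $|x-u|/(1+|x|)$ we have $|u-D_t\,x| \gtrsim |x-u|$, and the Gaussian factor $\exp(-c|x-u|^2/t)$ together with the polynomial prefactors is integrable in $t$ over that range with a bound of the claimed form $e^{R(x)}(1+|x|)^C|x-u|^{-C}$; for the complementary range $t \gtrsim |x-u|/(1+|x|)$, which is only non-empty when $|x-u| \lesssim 1+|x|$, I bound the exponential by $1$ and integrate $t^{-n/2-1}$ from that lower cut-off, producing a factor $((1+|x|)/|x-u|)^{n/2}$, again of the right form. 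This establishes absolute convergence of \eqref{mnoll} for $x\ne u$ and the estimate \eqref{M-00'}.

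For part \emph{(ii)}, I would argue exactly as in the proof of part \emph{(ii)} of Proposition \ref{Meps}: observe that by \eqref{M-00'} the right-hand side of \eqref{m-0'}, for $x$ in a fixed compact set staying away from $\mathrm{supp}\,f$, is a bounded functional of $f \in L^2_0(\gamma_\infty)$, while $m_0(T)$ is bounded on $L^2_0(\gamma_\infty)$ (via $m_0(T) = m(T) - m_1(T)$, both bounded by \cite[Lemma 3.7]{Carbonaro-Oliver} and Proposition \ref{Meps}), so it suffices to check \eqref{m-0'} on each generalized eigenspace $\mathcal E_\lambda$. For $f \in \mathcal E_\lambda$, Proposition \ref{restr} gives $m_0(T)f(x) = -\int_0^1 \varphi(t)\,\partial_t\!\int K_t(x,u)f(u)\,d\gamma_\infty(u)\,dt$, and Lemma \ref{derint} lets me move the $t$-derivative inside; then a Fubini argument swaps the $t$ and $u$ integrals. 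The one extra point compared with Proposition \ref{Meps} is that the analogue of \eqref{fubini} — the bound $\int_0^1 |\dot K_t(x,u)|\,dt < \infty$ — is not uniform in $u$ but does hold locally uniformly in $(x,u)$ away from the diagonal, which is all that is needed since $x \notin \mathrm{supp}\,f$ forces $|x-u|$ bounded below on $\mathrm{supp}\,f$; alternatively one integrates first over $\varepsilon < t < 1$, uses \eqref{m-eps} together with $m_0(T) = m(T)-m_1(T)$, and lets $\varepsilon \to 0$ using dominated convergence justified by the same local bound.

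The main obstacle I anticipate is the bookkeeping in part \emph{(i)}: getting clean, genuinely $u$-independent powers of $(1+|x|)$ and $|x-u|^{-1}$ out of the competition between the Gaussian concentration scale $\sqrt t$, the drift displacement $|x - D_t x| \simeq t|x|$, and the polynomial prefactors $t^{-n/2-1}$ and $|x|t^{-1/2}$. The splitting of the $t$-integral at the threshold $t \simeq |x-u|/(1+|x|)$ is the natural device, but one has to be slightly careful that this threshold may exceed $1$ (in which case only the first range contributes) and that the constant $C$ in \eqref{M-00'} can be taken to depend only on $n$; everything else is routine once Lemma \ref{differ} is invoked to replace $|u - D_t x|$ by $|x-u|$ on the relevant range.
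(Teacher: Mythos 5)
Your part \emph{(i)} is correct and is essentially the paper's argument: the same pointwise bound on $\dot K_t$, the same use of Lemma \ref{differ} to replace $|u-D_t\,x|$ by $|x-u|$ below a threshold $t\lesssim |x-u|/|x|$, and the same crude integration of $t^{-n/2-1}$ above it. (The extra summand $|u-D_t\,x|^2/t^2$ you insist on keeping is in fact harmlessly absorbed by the Gaussian even on the diagonal, since $s\,e^{-cs}\lesssim e^{-cs/2}$; this is why the paper drops it.)

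Part \emph{(ii)} contains a genuine gap. The reduction to generalized eigenspaces, which works for Proposition \ref{Meps}\emph{(ii)}, does not transfer here. The identity \eqref{m-0'} is only asserted for $x\notin\mathrm{supp}\,f$, and the kernel bound \eqref{M-00'} carries a nonintegrable singularity $|x-u|^{-C}$ with $C>n$ (it comes from $t^{-n/2-1}$), so $f\mapsto\int\mathcal M_0(x,u)f(u)\,d\gamma_\infty(u)$ is \emph{not} a bounded functional on all of $L^2_0(\gamma_\infty)$ for fixed $x$ — only on functions supported away from $x$. The generalized eigenfunctions are polynomials, hence have full support: there is no $x$ off the support of a nonzero $f\in\mathcal E_\lambda$, the right-hand side of \eqref{m-0'} is not even defined for such $f$, and approximating a compactly supported $f$ by eigenfunction combinations destroys the support restriction. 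Moreover, even formally, the Fubini step you invoke for $f\in\mathcal E_\lambda$ fails: one computes $\int|\dot K_t(x,u)|\,|f(u)|\,d\gamma_\infty(u)\gtrsim t^{-1}$ as $t\to0$ for polynomial $f$, so the double integral over $(0,1)\times\R^n$ is not absolutely convergent. This lack of absolute convergence near the diagonal is precisely the difficulty the proposition has to overcome.

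Your parenthetical ``alternative'' — pass to $m_\varepsilon(T)$, use \eqref{m-eps}, and let $\varepsilon\to0$ — is the route the paper actually takes, but as stated it is incomplete. Dominated convergence and the uniform bound \eqref{unif} do handle the right-hand side, $\int\mathcal M_\varepsilon(x,u)f(u)\,d\gamma_\infty(u)\to\int\mathcal M_0(x,u)f(u)\,d\gamma_\infty(u)$ for $x\notin\mathrm{supp}\,f$. What is missing is the left-hand side: you must show $m_\varepsilon(T)f\to m_0(T)f$ in some sense (the paper proves convergence as distributions). This does not follow from kernel bounds; the paper obtains it from the contour-integral definition of $m_\varepsilon(T)$, the pointwise convergence $m_\varepsilon(z)\to m(z)$ on $\Gamma$ together with the uniform bound $|m_\varepsilon(z)|\lesssim|z|/\Re z\lesssim1$ there, and dominated convergence along $\Gamma$ using \eqref{resolventimp}. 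You need to supply this step.
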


\begin{proof}
Since $\mathcal M_0$  and  $m_0(T)$ only depend on the restriction of $\varphi$ to the interval $(0,1)$, we can assume in this proof that $\varphi$ vanishes for $t \ge 1$.

To verify {\it{(i)}},  consider the first inequality in \eqref{dotKeps}. We have
 $|u-  D_{t}\,x| \ge |u-x|- |x-  D_{t}\,x|$, and
Lemma \ref{differ}  says that $|x-  D_{t}\,x| \lesssim t|x|$. Thus  $|u-  D_{t}\,x| \ge |u-x|/2$ if  $t<c|u-x|/|x|$ for some $c>0$,
and  we conclude that for $0<t < 1\wedge  c|u-x|/|x|$
\begin{equation}\label{ttt}
|\dot K_t (x,u)| \lesssim e^{R(x)}\,\exp{\left(-c\,\frac{|u- x|^2}t\right)}\,(1+|x|)\,t^{-C} \lesssim e^{R(x)}\,(1+|x|)\, |u-x|^{-2C}.
\end{equation}

The right-hand side here also gives a bound for the integral of
$|\dot K_t (x,u)|$ over the interval $0 <  t <1\wedge (c|u-x|/|x|)$.

 For                 
 $c|u-x|/|x| < t < 1$,  notice that the third quantity in \eqref{dotKeps} is no larger than
 $ e^{R(x)}\,(1+|x|)^C\, |x-u|^{-C}$. After integration, this allows us to end the verification of   {\it{(i)}}.

 Here we observe that, since   $\varphi$ is supported in $[0,1]$, the argument just given leads to
  \begin{equation}\label{unif}
\left|\mathcal M_\varepsilon(x,u) \right|\lesssim e^{R(x)} \,(1+|x|)^C\, |x-u|^{-C},
      \qquad                      x \ne u
\end{equation}
and this is uniform in $\varepsilon$.

  \vskip3pt

   To prove  {\it(ii)}, we will let $\varepsilon \to 0$ in \eqref{m-eps}, with  $x \notin \mathrm{supp}\,f$
   and  $f \in  L^2_0(\gamma_\infty)$.   Consider first the right-hand side of \eqref{m-eps}.

   Because of \eqref{ttt}, we see from   \eqref{eps}   and  \eqref{mnoll}
   that, with $\varphi$ supported in $[0,1]$, one has
   $\mathcal M_\varepsilon(x,u) \to  \mathcal M_0(x,u) $ as $\varepsilon \to 0$, for any  $x \ne u$. In the integral
      in the right-hand side of  \eqref{m-eps}, we thus have pointwise convergence, and
     $|f(u)|\,d\gamma_\infty(u)$ is a finite measure.
      The estimate \eqref{unif} allows us to apply  bounded convergence, and conclude that
    \begin{equation*}
    \int \mathcal M_\varepsilon(x,u)f(u)\,d\gamma_\infty(u) \to
   \int \mathcal M_0(x,u)f(u)\,d\gamma_\infty(u), \qquad
   \varepsilon \to 0,
\end{equation*}
for $x \notin \mathrm{supp}\,f$. Moreover, the left-hand integral here is a function of $x$ which stays locally  bounded in the complement of $\mathrm{supp}\,f$, uniformly in
$\varepsilon$. So we also have convergence in the sense of distributions in $\R^n\setminus\text{supp\,$f$}$.

To deal with the left-hand side of  \eqref{m-eps},
we claim that $m_\varepsilon(T)f \to m_0(T)f$ in the sense of distributions in
 $\R^n                    
 $, as  $\varepsilon \to 0$. This will end the proof of   {\it(ii)}.

 With $\nu,\;\:\Gamma$ and $\psi(z) = 1/(1+z^2)$ as in Section 3, we have
 \begin{equation*}
   m_\varepsilon(T) = (1+T^2)\, \frac 1 {2\pi i} \int_\Gamma  \frac {m_\varepsilon(z)}{1+z^2}\,(zI-T)^{-1} \,dz.
\end{equation*}


 To prove the claim, we let  $f \in L^2(\gamma_\infty)$ and take $g \in C_0^\infty(\R^n)         
 $.
 It is enough to verify that
  \begin{equation*}
   \langle m_\varepsilon(T)f,\, g\rangle \to \langle m(T)f,\, g\rangle,
   \quad \quad \varepsilon \to 0,
\end{equation*}
 with the scalar products taken in  $L^2(\gamma_\infty)$. Notice that it does not matter whether we consider the convergence of the functions
 $m_\varepsilon(T)f$  or the measures $m_\varepsilon(T)f\,d\gamma_\infty$.
 We have
 \begin{align} \label{scalar}
  \langle m_\varepsilon(T)f,\, g\rangle =
& \left\langle (1+T^2)\, \frac 1 {2\pi i} \int_\Gamma   \notag \frac{m_\varepsilon(z)}{1+z^2}\, (zI-T)^{-1} f \,dz,\, g\right\rangle \\
 = \,& \frac 1 {2\pi i} \int_\Gamma \frac{m_\varepsilon(z)}{1+z^2}\,
 \left\langle  (zI-T)^{-1}  f\, ,\, (1+(T^*)^2)\,g
 \right\rangle\,dz,
\end{align}
 where $T^*$ is the adjoint of $T$ in  $L^2(\gamma_\infty)$, so that $(1+(T^*)^2)\,g$ is another test function in $C_0^\infty(\R^n)$. Now
 $m_\varepsilon(z) = z\int_{\varepsilon}^{\infty}\varphi(t)\,e^{-tz} \,dt$
 tends to
 $m(z)$ for each nonzero $z \in \Gamma$. For such $z$
 we also have the bound  $|m_\varepsilon(z)| \le \| \varphi\|_\infty\,
 |z|/\Re z \lesssim  1$.
 In the last integral in \eqref{scalar}, the integrand thus converges pointwise, and it is also dominated by constant times
 \begin{align*}                
  \frac{   1}{1+|z|^2}\: \|  (zI-T)^{-1} f\|_{L^2(\gamma_\infty)}\: \|(1+(T^*)^2)\,g\|_{L^2(\gamma_\infty)},
\end{align*}
which is integrable along $\Gamma$ because of \eqref{resolventimp}.
The dominated convergence theorem now implies the claim and completes the proof of Proposition \ref{conv0'}.
\end{proof}

\subsection{Simplifications}\label{Simplifications}
The preceding estimates
allow some preliminary observations that will simplify the proof of Theorem~\ref{weaktype1}.

   In \eqref{thesis-mixed-Di} we take
$f \ge 0$ such that
 $\|f\|_{L^1( \misgaussk)}=1$. We can then 
 assume that $\alpha$ in the same estimate is large, in particular $\alpha >2$, since $d\gamma_\infty$ is finite.

Further, we
 can focus mainly  on points  $x$
 in the ellipsoidal annulus
 \begin{equation}\label{crown}
{\mathcal C_\alpha}=\left\{
x \in\R^n:\, \frac12  \log \alpha\le
R(x)
\le 2  \log \alpha
\right\}.
\end{equation}
\smallskip
To justify this,
 we will follow closely the arguments in \cite[Section 6]{CCS3}. The first observation is that the
 set of points $x$ for which  $R( x)> 2 \log \alpha$ can be neglected,
because its  $d\gamma_\infty$ measure is no larger than $C/\alpha$.

The next proposition deals with the remaining part of the complement of $\mathcal C_\alpha$ when $t>1$ and  follows immediately from \eqref{M-1}.

   \begin{proposition}\label{inner0}
Let $x\in \R^n$ satisfy
   $R(x) < \frac12 \log \alpha$, where $\alpha > 2$.
   Then for all $u\in \R^n$
 \[
\big|\mathcal M_1 (x,u)\big|
\lesssim { \alpha}.
\]
\end{proposition}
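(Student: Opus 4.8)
The plan is to invoke directly the pointwise bound \eqref{M-1} established in Proposition~\ref{Meps}(i), namely $|\mathcal M_1(x,u)| \lesssim e^{R(x)}$ for all $x,u\in\R^n$, and then insert the hypothesis on $R(x)$. Under the assumption $R(x) < \tfrac12\log\alpha$ we get $e^{R(x)} < e^{\frac12\log\alpha} = \alpha^{1/2}$. Since $\alpha > 2 > 1$, we have $\alpha^{1/2}\le\alpha$, and therefore
\begin{equation*}
|\mathcal M_1(x,u)| \lesssim e^{R(x)} < \alpha^{1/2} \le \alpha,
\end{equation*}
uniformly in $u\in\R^n$, which is the claimed estimate.

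There is no genuine obstacle here: the proposition is a one-line consequence of \eqref{M-1}, whose proof (via \eqref{dotK1} and Lemma~\ref{preliminary-t-large-not-kernel}) has already been carried out. The only thing worth noting is that the crude replacement of $\alpha^{1/2}$ by $\alpha$ is wasteful, but it is exactly what is needed for the later covering/distributional argument on the complement of the annulus $\mathcal C_\alpha$, where one compares against the threshold $C\alpha$ in \eqref{thesis-mixed-Di}; keeping $\alpha^{1/2}$ would give an even stronger statement but is not required.
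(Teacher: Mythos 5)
Your proposal is correct and coincides with the paper's argument: the paper states that Proposition~\ref{inner0} ``follows immediately from \eqref{M-1}'', which is precisely the substitution $e^{R(x)} < \alpha^{1/2} \le \alpha$ that you carry out. Nothing further is needed.
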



Further simplifications will be introduced in Subsection \ref{simply}.

 \section{The  weak type $(1,1)$ for  large $t$ }\label{t-large-section}
\begin{proposition}\label{stima minfty}
For any
     $f\in L^1 (\gamma_\infty)$ such that
$\|f\|_{L^1( \gamma_\infty)}=1$  and any $\alpha>2$,
\begin{equation*}
  \qquad \qquad \gamma_\infty
\left\{x  \in\mathcal C_\alpha :
 |m_1(T)   f(x)|
 > \alpha\right\} \lesssim \frac{1}{\alpha\sqrt{\log \alpha}}.
\end{equation*}
In particular,
the  operator
 $m_1(T)$
  is of weak type $(1, 1)$ with respect to the invariant measure $d\gamma_\infty$.
  \end{proposition}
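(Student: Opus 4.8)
The plan is to use the pointwise kernel bound \eqref{M-1}, namely $|\mathcal M_1(x,u)| \lesssim e^{R(x)}$, together with the fact that $m_1(T)$ acts through this kernel by \eqref{m-eps}, which extends from $L^2_0(\gamma_\infty)$ to $L^1(\gamma_\infty)$ by density once the relevant bounds are in place. (A minor point to be careful about: $f \in L^1(\gamma_\infty)$ need not lie in $L^2_0(\gamma_\infty)$, so one should either write $f = f_0 + c$ with $f_0 \in L^2_0$ and $c$ constant, noting that $T$ and hence $m_1(T)$ kill constants, or simply work with $L^1 \cap L^2_0$ functions and pass to the limit; either way the constant component contributes nothing.) Granting this, for $x \in \mathcal C_\alpha$ we get the crude bound
\begin{equation*}
|m_1(T)f(x)| \le \int |\mathcal M_1(x,u)|\,|f(u)|\,d\gamma_\infty(u) \lesssim e^{R(x)}\,\|f\|_{L^1(\gamma_\infty)} = e^{R(x)}.
\end{equation*}

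Next I would exploit the definition of $\mathcal C_\alpha$ in \eqref{crown}. On this set $R(x) \le 2\log\alpha$, so the bound above only gives $|m_1(T)f(x)| \lesssim \alpha^2$, which is too weak to directly control the super-level set at height $\alpha$ by Chebyshev. The fix is to integrate the $L^1$ bound in $x$ over the relevant slice of $\mathcal C_\alpha$. Precisely, by Chebyshev's inequality,
\begin{equation*}
\gamma_\infty\left\{x \in \mathcal C_\alpha : |m_1(T)f(x)| > \alpha\right\} \le \frac{1}{\alpha}\int_{\mathcal C_\alpha} |m_1(T)f(x)|\,d\gamma_\infty(x) \lesssim \frac{1}{\alpha}\int_{\mathcal C_\alpha} e^{R(x)}\,d\gamma_\infty(x),
\end{equation*}
using $\|f\|_{L^1(\gamma_\infty)} = 1$ in the kernel estimate. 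It remains to show that $\int_{\mathcal C_\alpha} e^{R(x)}\,d\gamma_\infty(x) \lesssim 1/\sqrt{\log\alpha}$. Writing out $d\gamma_\infty(x) = (2\pi)^{-n/2}(\det Q_\infty)^{-1/2}e^{-R(x)}\,dx$, the factor $e^{R(x)}$ cancels the Gaussian weight exactly, so this integral equals a constant times the Lebesgue volume of $\{x : \tfrac12\log\alpha \le R(x) \le 2\log\alpha\}$. Since $R$ is a positive definite quadratic form, the sublevel set $\{R(x) \le \rho\}$ is an ellipsoid of volume $c_n\,\rho^{n/2}$, so the annulus $\mathcal C_\alpha$ has volume $c_n\big((2\log\alpha)^{n/2} - (\tfrac12\log\alpha)^{n/2}\big) \simeq (\log\alpha)^{n/2}$. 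That is \emph{not} $\lesssim 1/\sqrt{\log\alpha}$; it grows.

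Consequently the crude bound is insufficient and the real argument must be subtler: one has to slice $\mathcal C_\alpha$ further according to the size of $R(x)$ and use a better estimate than \eqref{M-1} on most slices. The natural device is the polar coordinates of Subsection on polar coordinates together with the refined large-$t$ kernel bound \eqref{dotK1} and Lemma \ref{preliminary-t-large-not-kernel}, which give $|\mathcal M_1(x,u)| \lesssim e^{R(x)}(1+|x|)^{\sigma-1}$-type control with decay in $|D_{-t}u - x|$; integrating in $u$ against $|f|\,d\gamma_\infty$ and then in $x$ over $\mathcal C_\alpha$ using the measure formula \eqref{def:leb-meas-pulita}, the Gaussian decay in the $u$-variable localizes the mass and produces the gain of a factor $(\log\alpha)^{-1/2}$ from integrating a Gaussian in the radial polar variable $s$ over a window of length $\simeq \log\alpha$. \textbf{I expect the main obstacle to be exactly this step}: organizing the double integral so that the Gaussian concentration in $u$ (from \eqref{tstort}) is transferred, via the $t$-integration controlled by Lemma \ref{preliminary-t-large-not-kernel}, into a bound on $\int_{\mathcal C_\alpha}|m_1(T)f(x)|\,d\gamma_\infty(x)$ that beats the volume growth of the annulus by a full power of $\sqrt{\log\alpha}$. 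Once that integral bound is established, Chebyshev closes the proposition, and the final sentence (weak type $(1,1)$ of $m_1(T)$) then follows by combining this with Proposition \ref{inner0} for the inner region $R(x) < \tfrac12\log\alpha$ — where $|\mathcal M_1(x,u)| \lesssim \alpha$ forces $|m_1(T)f(x)| \lesssim \alpha \|f\|_1$, so the super-level set at $C\alpha$ is empty for a suitable constant — and the observation that $\{R(x) > 2\log\alpha\}$ has $\gamma_\infty$-measure $\lesssim 1/\alpha$.
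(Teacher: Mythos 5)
Your proposal does not reach a proof: after correctly observing that the crude bound $|\mathcal M_1(x,u)|\lesssim e^{R(x)}$ plus Chebyshev fails (the integral $\int_{\mathcal C_\alpha}e^{R(x)}\,d\gamma_\infty(x)\simeq(\log\alpha)^{n/2}$ grows), you explicitly leave the main step as an expectation rather than an argument. The step you are missing is precisely what the paper does: for $x=D_s\tilde x\in\mathcal C_\alpha$ and $u=D_\sigma\tilde u$ with $\tilde x,\tilde u\in E_{(\log\alpha)/2}$, the comparison $|D_{-t}u-x|\gtrsim|\tilde x-\tilde u|$ from \cite[Lemma 4.3(i)]{CCS2} upgrades \eqref{tstort} and hence \eqref{M-1} to the refined bound $|\mathcal M_1(x,u)|\lesssim e^{R(x)}\exp\left(-c\,|\tilde x-\tilde u|^2\right)$, after which the level-set estimate is exactly Lemma \ref{bounding-GMAa} (the case $\sigma=1$ of \cite[Lemma 7.2]{CCS3}).

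More importantly, the strategy you propose for closing the gap — bounding $\int_{\mathcal C_\alpha}|m_1(T)f|\,d\gamma_\infty$ and applying Chebyshev — cannot work quantitatively, even granting the refined kernel bound. Since $e^{R(x)}\,d\gamma_\infty(x)\simeq dx\simeq\sqrt{\log\alpha}\,dS_\beta(\tilde x)\,ds$ on $\mathcal C_\alpha$ (with $0<s\lesssim 1$) and $\int_{E_\beta}\exp\left(-c\,|\tilde x-\tilde u|^2\right)dS_\beta(\tilde x)\lesssim 1$, one gets $\int_{\mathcal C_\alpha}|m_1(T)f|\,d\gamma_\infty\lesssim\sqrt{\log\alpha}$, so Chebyshev yields only $\sqrt{\log\alpha}/\alpha$ — worse than the target $1/(\alpha\sqrt{\log\alpha})$ by a factor $\log\alpha$, and not even enough for plain weak type $(1,1)$. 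The extra gain must come from a genuine level-set argument, not an $L^1$ bound: as in the proof of \eqref{stime per A} in the Appendix, for each $\tilde x$ one integrates $e^{-R(D_s\tilde x)}$ in $s$ only beyond the first crossing $s_0(\tilde x)$ of the level $\alpha$, and the decay $\partial_sR(D_s\tilde x)\simeq\log\alpha$ produces the factor $1/\log\alpha$ that converts $\sqrt{\log\alpha}$ into $1/\sqrt{\log\alpha}$. Your handling of the complement of $\mathcal C_\alpha$ (Proposition \ref{inner0} for the inner region, the measure bound for $R(x)>2\log\alpha$) is fine and matches the paper.
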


The estimate in  this   proposition      means  that for large $\alpha$ one has a slightly stronger estimate than the classical
weak type $(1,1)$ bound.
This phenomenon was  already observed for the Ornstein--Uhlenbeck    maximal operator  in \cite[Section 6]{CCS2},
for the  first-order Riesz transforms in \cite[Proposition 7.1]{CCS3})
and for the variation operator of the Ornstein–Uhlenbeck semigroup in dimension one
in \cite[Proposition 3.1]{CCS6}.

\begin{proof}
We will first use our polar coordinates to deduce a sharper version of the estimate \eqref{M-1} in Proposition \ref{Meps}{\it{(i)}}.
 If  $x  \in\mathcal C_\alpha$ and $u \ne 0$, we can write
 $x = D_s \,\tilde x$ and
$u = D_\sigma\, \tilde u$ with  $\tilde x ,\,\tilde u\in E_{(\log\alpha)/2}$ and $s \geq 0$, $\sigma\in\R$.

Let $t \ge 1$.
Applying \cite[Lemma 4.3 {\it{(i)}}]{CCS2},
we obtain
\begin{align*}
|D_{-t}\,u-x| &= |D_{\sigma-t}\,\tilde u-D_s\, \tilde x|
\gtrsim |\tilde x-\tilde u|.
\end{align*}

Thus \eqref{tstort} implies
\begin{align*}
K_t(x,u)
\lesssim
e^{R(x)}\,
\exp
\Big(
-c\,
 |\tilde x-\tilde u|^2\Big)
\,\exp
\Big(
-c\,
\left|
D_{-t}\,u- x\right|^2
\Big),
\end{align*}
for some $c$.

Using this estimate instead of
\eqref{tstort}, one can follow  the proof of \eqref{M-1}  with an extra factor $\exp \left(- c\,|\tilde x-\tilde u|^2\right)$.
The result will be
\begin{equation}\label{vv}
\big|\mathcal M_1 (x,u)\big| \lesssim  e^{R( x)} \,
 \exp\big(- c\,\big|\tilde x-\tilde u\big|^2\big), \qquad
 x  \in\mathcal C_\alpha.
\end{equation}

 We can now finish the proof of  Proposition \ref{stima minfty} by means of  the following lemma,
which is the case $\sigma = 1$ of
  \cite[Lemma 7.2]{CCS3}.

  \begin{lemma}\label{bounding-GMAa}
 Let $f\ge 0$ be normalized in $L^1 (\gamma_\infty)$. For $\alpha > 2$
 \[
\gamma_\infty \left\{ x = D_s\,\tilde x \in\mathcal C_\alpha:\:
e^{R(x)}
\int
\exp
 \big(
- c\, \big|
\tilde x-\tilde u\big|^2\big)
\,f(u)\,\misgausskd
 (u)
>
\alpha
\right\}
\lesssim \frac{C}{\alpha\,\sqrt{\log \alpha}}.
\]
                     \end{lemma}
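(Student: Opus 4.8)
\textbf{Proof proposal for Lemma \ref{bounding-GMAa}.}

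The plan is to estimate the inner integral directly, split the ellipsoidal annulus $\mathcal C_\alpha$ into pieces indexed by the polar radius $s$, and use the near-orthogonality of the Gaussian factor $\exp(-c|\tilde x - \tilde u|^2)$ on the ellipsoid $E_{(\log\alpha)/2}$ to control the level set on each piece. First I would rewrite $d\gamma_\infty(u)$ in polar coordinates using \eqref{def:leb-meas-pulita}; on the ellipsoid $E_\beta$ with $\beta = (\log\alpha)/2$ the surface measure $dS_\beta$ is comparable, after rescaling to the unit ellipsoid $E_1$, to $\beta^{(n-1)/2}$ times the surface measure there, and $|\tilde x - \tilde u|^2 \simeq \beta\, |\hat x - \hat u|^2$ where $\hat x,\hat u$ are the corresponding points of $E_1$. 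Thus the Gaussian factor becomes $\exp(-c\beta|\hat x - \hat u|^2)$, a sharply concentrated kernel of width $\beta^{-1/2}$ on $E_1$.

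The key step is then a covering argument on $E_1$: for each $x = D_s\tilde x \in \mathcal C_\alpha$, the weight $e^{R(x)} = e^{R(\tilde x)} e^{-s\,\mathrm{tr}B\,\cdot}$ — wait, more precisely $R(D_s\tilde x)$ relates to $R(\tilde x) = \beta$ through the polar structure, but on $\mathcal C_\alpha$ we have $e^{R(x)} \le \alpha^2$ and the $d\gamma_\infty$-measure carries a compensating factor $e^{-R(x)}$. Combining $e^{R(x)}$ from the statement with the factor $e^{-R(x)}$ hidden in $d\gamma_\infty(x)$ of the outer measure, and integrating the $s$-variable over the (bounded-length, since $R(x)$ ranges over a factor-$4$ interval) set where $D_s\tilde x \in \mathcal C_\alpha$, I would reduce matters to bounding
\[
\gamma_\infty^{E_1}\Big\{\hat x \in E_1 : \int_{E_1} \exp(-c\beta|\hat x - \hat u|^2)\, g(\hat u)\, d\hat u > c\,\alpha\, \beta^{-1/2}\Big\}
\]
for a suitable nonnegative $g$ with controlled $L^1(E_1)$ norm (of size $\lesssim 1$, coming from $\|f\|_{L^1(\gamma_\infty)} = 1$ and the measure conversion, which contributes a factor $\beta^{1/2}$). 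A Chebyshev/weak-type $(1,1)$ bound for the kernel operator with kernel $\exp(-c\beta|\hat x - \hat u|^2)$ on the compact manifold $E_1$: this kernel has $L^1$-norm in $\hat u$ uniformly $\lesssim \beta^{-(n-1)/2}$ — no wait, for the level-set estimate one wants that $\hat x \mapsto \int \exp(-c\beta|\hat x-\hat u|^2) g(\hat u) d\hat u$ is in weak-$L^1$ with norm $\lesssim \beta^{-(n-1)/2}\|g\|_1$ — hmm, I need to be careful: the operator norm $L^1 \to L^1$ of convolution with a kernel of total mass $M$ is $M$, so the level set where the output exceeds $\lambda$ has measure $\le M\|g\|_1/\lambda$. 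With $M \simeq \beta^{-(n-1)/2}$, $\|g\|_1 \lesssim \beta^{1/2}$, and $\lambda \simeq \alpha\beta^{-1/2}$, this gives a bound $\lesssim \beta^{-(n-1)/2}\,\beta^{1/2}\,\beta^{1/2}/\alpha = \beta^{(3-n)/2}/\alpha$, which is not quite $1/(\alpha\sqrt{\log\alpha})$ for general $n$. The resolution is that the $\beta^{-(n-1)/2}$ from the kernel mass should not appear after one correctly tracks all measure conversions, since $g$ itself already absorbs the Jacobian; carrying out the bookkeeping with the area measures $dS_\beta$ consistently on both sides should collapse the extra powers of $\beta$ and leave exactly one surviving factor $\beta^{-1/2} = (\log\alpha)^{-1/2}/\sqrt 2$, which is the gain in the statement.

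The main obstacle, and the step deserving the most care, is exactly this bookkeeping of the powers of $\beta = (\log\alpha)/2$ coming from three sources: the surface-measure rescaling $E_\beta \to E_1$, the Gaussian concentration width, and the fact that $e^{R(x)}$ paired against $e^{-R(x)}\,d x$ in the outer Gaussian measure is not a perfect cancellation because the relevant $x = D_s\tilde x$ has $R(x)$ varying with $s$ while $\tilde x \in E_\beta$ is fixed. Since this lemma is quoted as the $\sigma = 1$ case of \cite[Lemma 7.2]{CCS3}, I would in practice simply cite that reference; the sketch above indicates how one would reprove it from scratch, the delicate point being to organize the change of variables so that precisely one factor of $(\log\alpha)^{-1/2}$ emerges and the weak-type constant is otherwise uniform in $\alpha$.
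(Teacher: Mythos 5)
Your final remark is the safe part: the paper itself offers no proof of this lemma, it simply imports it as the case $\sigma=1$ of \cite[Lemma 7.2]{CCS3}, so citing that reference is exactly what the authors do. The from-scratch sketch you offer, however, has a genuine gap, and by your own accounting it does not close: your bookkeeping yields $\beta^{(3-n)/2}/\alpha$ and you defer the repair to an unspecified "correct tracking of measure conversions." The deeper problem is structural. A Chebyshev or $L^1\to L^{1,\infty}$ bound for the convolution kernel $\exp(-c\beta|\hat x-\hat u|^2)$ on the rescaled ellipsoid cannot produce the factor $(\log\alpha)^{-1/2}$, because that gain does not come from the mass or the concentration width of the Gaussian kernel at all. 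Indeed, a plain Chebyshev estimate in $x$ over $\mathcal C_\alpha$ (using $dx\simeq\sqrt{\log\alpha}\,dS_\beta\,ds$ and the fact that $\int_{E_\beta}\exp(-c|\tilde x-\tilde u|^2)\,dS_\beta(\tilde x)\lesssim 1$ uniformly) gives only $\sqrt{\log\alpha}/\alpha$, which is worse than the claim by a full factor $\log\alpha$. Your reduction to a fixed threshold $c\,\alpha\,\beta^{-1/2}$ on $E_1$ also discards the essential feature that the threshold comparison involves $e^{R(x)}$, which varies with the radial coordinate $s$ while the inner integral depends only on $\tilde x$.

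The mechanism that actually produces the gain is the sublevel-set argument in the radial variable, which this paper carries out explicitly for the quantities $A(x)$ and $B(x)$ in the Appendix (Section \ref{Appendix}) and which is the content of \cite[Lemma 7.2]{CCS3}. Write $F(D_s\tilde x)=e^{R(D_s\tilde x)}\,G(\tilde x)$ with $G(\tilde x)=\int\exp(-c|\tilde x-\tilde u|^2)f(u)\,d\gamma_\infty(u)$; since $G(\tilde x)\lesssim 1$ and $e^{R(\tilde x)}=\sqrt\alpha$, the level $\alpha$ is not exceeded at $s=0$, and since $s\mapsto R(D_s\tilde x)$ is increasing with $\partial_s R(D_s\tilde x)\simeq|D_s\tilde x|^2\simeq\log\alpha$ on $\mathcal C_\alpha$, the set $\{s: F(D_s\tilde x)>\alpha\}$ is a half-line $s>s_0(\tilde x)$ with $e^{R(D_{s_0}\tilde x)}G(\tilde x)=\alpha$. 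Hence $\int_{s>s_0}e^{-R(D_s\tilde x)}\,ds\lesssim e^{-R(D_{s_0}\tilde x)}/\log\alpha=G(\tilde x)/(\alpha\log\alpha)$, and multiplying by the Jacobian factor $\sqrt{\log\alpha}$, integrating $dS_\beta(\tilde x)$, and applying Fubini to $\int_{E_\beta}G\,dS_\beta\lesssim\|f\|_{L^1(\gamma_\infty)}=1$ yields exactly $1/(\alpha\sqrt{\log\alpha})$. No rescaling to $E_1$ and no estimate of the kernel's total mass is needed; the single factor $(\log\alpha)^{-1/2}$ survives as $\sqrt{\log\alpha}\cdot(\log\alpha)^{-1}$.
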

\end{proof}

\section{Localization}\label{localization}

In order to study the  weak type $(1,1)$ for  small $t$,  we need a further splitting of the operator   $m_0(T)$. 
Localization means considering that part of  $m_0(T)f(x)$ which depends on the values of $f$ in a ball of center $x$ and radius essentially  $1/(1 +|x|)$. We start by "filling" $\mathbb R^n$ with balls of this type.

\subsection{Local and global parts}

With $x_0 = 0$, we select a sequence $(x_j)_0^\infty$ of points such that the open balls $B_j = B(x_j, 1/(1 +|x_j|))$,\hskip6pt $j=0,1,\dots$,  are pairwise disjoint, and such that the family $(B_j)_0^\infty$is maximal with respect to this property.

In the sequel, we will often use notations like $3B_j$ in the sense of concentric scaling.

We first verify that the balls $3B_j$,\hskip6pt $j=0,1,\dots$, cover  $\mathbb R^n$.    
To verify that a given point $x$ is in some  $3B_j$, we
 can assume that $|x| \ge 3$, since otherwise $x \in 3B_0$. The maximality property of the  $B_j$ implies that
the ball $B(x, 1/(1 +|x|))$ must intersect some  $B_j$,  necessarily with $j>0$. Then
\begin{equation}\label{x-x_j}
|x-x_j| < \frac 1 {1 +|x|} + \frac 1 {1 +|x_j|},
\end{equation}
 and so
\begin{equation*}
 |x_j| < |x| + \frac 1 {1 +|x|} + \frac 1 {1 +|x_j|} \le |x| + \frac 1 4 + \frac 1 2
 \le |x| + \frac 3 4 \, \frac {|x|} 3 = \frac 5 4\, |x|.
\end{equation*}
Thus $1 +|x_j| < 5(1 +|x|)/4$, and \eqref{x-x_j} implies
\begin{equation*}
|x-x_j| < \frac 5 4\,\frac 1 {1 +|x_j|} + \frac 1 {1 +|x_j|} < \frac 3 {1 +|x_j|},
\end{equation*}
so that $x \in 3B_j$.

We will need another property of these balls: for any $j$ and any $x$
\begin{equation}\label{6B}
x \in 6B_j  \qquad   \Rightarrow \qquad \frac 1 7  < \frac{1 +|x|} {1 +|x_j|} < 7.
\end{equation}
Indeed, if $|x-x_j| < 6/(1 +|x_j|)$ we have
\begin{equation*}
1+|x_j| < 1+ |x| + 6/(1 +|x_j|) \le 7+|x| \le 7(1 +|x|),
\end{equation*}
and the other inequality follows in the same way.

This allows us to show that the balls  $(6B_j)_0^\infty$ have bounded overlap, as follows.
Let $x \in 6B_j$, so that  $x_j \in  B(x, 6/(1 +|x_j|))$. Then
\begin{equation*}
B_j = B(x_j, 1/(1 +|x_j|) \subset B(x, 7/(1 +|x_j|)) \subset B(x, 49/(1 +|x|)),
\end{equation*}
the last step by \eqref{6B}. Comparing Lebesgue measures, we have  $|B_j| \gtrsim (1 +|x|)^{-n}$ and
$|B(x, 49/(1 +|x|))| \lesssim (1 +|x|)^{-n}$. Since the $B_j$ are pairwise disjoint, this can occur for at most a bounded number of $j$. The bounded overlap is verified.

We now introduce functions supported in some of these balls, with which the local part of $m_0(T)$
will be defined. Let first $\rho_i$ for each $i=0,1,\dots$ be a nonnegative, smooth function supported in
$4B_i$ such that $\rho_i= 1$ in  $3B_i$. Its gradient should satisfy $|\nabla  \rho_i(x)| \lesssim 1 +|x|$.
The sum $\sum_0^\infty \rho_i$ is locally finite  and satisfies  $1 \le \sum_0^\infty \rho_i \lesssim 1$ and also
$|\nabla \sum_0^\infty \rho_i(x)| \lesssim 1 +|x|$.

The functions we will use are
\begin{equation*}
r_j = \frac{\rho_j}{\sum_0^\infty \rho_i}.
\end{equation*}
Clearly, $r_j$ is  nonnegative, smooth and supported in $4B_j$, and $\sum_0^\infty r_j = 1$. For the gradient, one has
\begin{equation}\label{nabla1}
  |\nabla  r_j(x)| \lesssim 1 +|x|.
\end{equation}

We will also need smooth functions  $ \widetilde r_j $,\hskip6pt $j=0,1,\dots$,  again
with values in $[0,1]$ but having larger supports. They will satisfy
$\widetilde r_j =1 $ in   $5B_j$     and        
$\text{supp}\:\widetilde r_j\subseteq 6B_j$, and like the $r_j$ they will also verify
\begin{equation}\label{nabla2}
\big|\nabla \widetilde r_j (x)\big|\lesssim 1+|x|.
\end{equation}
We  observe that
 the functions   $\widetilde r_j$ have bounded overlap
and that
\begin{equation} \label{Rconst}
 e^{-R(x)}\simeq e^{-R(x_j)} \qquad \text{for} \qquad
 x\in \text{supp}\:\widetilde r_j,
\end{equation}
 which follows from   \cite[formula (2.9)]{CCS3}  with   $y = x-x_j$  when  $|x|$ is large and  is trivial in the opposite case.

 Let
\begin{equation}\label{def:eta}
 \eta(x,u)=\sum_{j=0}^\infty
\widetilde r_j (x) \,r_j (u),
\end{equation}
and note that for all $x,u\in\R^n$
\begin{equation}\label{eta_ineq}
0 \le \eta(x,u)\le 1.                    
\end{equation}

The following two lemmas express that $\eta(x,u)$ indicates how close the points $x$ and $u$ are to each other.

\begin{lemma}\label{lemma1}
 ($i$) If $\eta(x,u) > 0$ for some points  $x$ and $u$, then $|x-u| \lesssim \frac 1 {1 +|x|}$.\\
 ($ii$)          
 For any points with $x\neq u$
\begin{equation}\label{nablaeta}
  \big|\nabla_x\,\eta (x,u)\big|+\big|\nabla_u\,\eta (x,u)\big|
  \lesssim   {|x-u|^{-1}}.
\end{equation}
\end{lemma}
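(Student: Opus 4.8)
\textbf{Proof plan for Lemma \ref{lemma1}.}

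For part ($i$), the plan is to use the definition \eqref{def:eta} of $\eta$: if $\eta(x,u)>0$ then there is at least one index $j$ with $\widetilde r_j(x)\,r_j(u)>0$, hence $x\in\mathrm{supp}\:\widetilde r_j\subseteq 6B_j$ and $u\in\mathrm{supp}\:r_j\subseteq 4B_j$. First I would record that both points then lie in $6B_j$, so by the triangle inequality $|x-u|\le |x-x_j|+|x_j-u|<12/(1+|x_j|)$. Finally, by \eqref{6B} applied to $x\in 6B_j$ we have $1+|x_j|>(1+|x|)/7$, which gives $|x-u|\lesssim 1/(1+|x|)$ as claimed. This part is entirely routine bookkeeping with the ball radii.

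For part ($ii$), I would differentiate \eqref{def:eta} term by term (the sum being locally finite), obtaining
\begin{equation*}
\nabla_x\,\eta(x,u)=\sum_j \big(\nabla\widetilde r_j(x)\big)\,r_j(u),\qquad
\nabla_u\,\eta(x,u)=\sum_j \widetilde r_j(x)\,\big(\nabla r_j(u)\big).
\end{equation*}
For a fixed pair $(x,u)$ with $x\ne u$, only those $j$ with $\widetilde r_j(x)\,r_j(u)\ne 0$ — equivalently $x\in 6B_j$ and $u\in 4B_j$ — contribute, and by the bounded overlap of the balls $(6B_j)$ there are at most $C$ such indices. For each such $j$, the bounds \eqref{nabla1} and \eqref{nabla2} give $|\nabla\widetilde r_j(x)|+|\nabla r_j(u)|\lesssim 1+|x|$, while from part ($i$) applied to this $j$ (i.e.\ from $\eta(x,u)>0$, which holds since some term is nonzero) one gets $|x-u|\lesssim 1/(1+|x|)$, hence $1+|x|\lesssim |x-u|^{-1}$. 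Summing the at most $C$ contributing terms yields $|\nabla_x\eta(x,u)|+|\nabla_u\eta(x,u)|\lesssim |x-u|^{-1}$.

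The only mild subtlety — and the step I would be most careful about — is the logical interleaving in part ($ii$): one needs to know that \emph{each} individual nonzero term forces $x$ and $u$ to be close, so that the factor $1+|x|$ can be traded for $|x-u|^{-1}$; this is exactly part ($i$) applied to the single index $j$, and it works because $\widetilde r_j(x)r_j(u)>0$ already implies $x\in 6B_j$, $u\in 4B_j\subseteq 6B_j$. There is no genuine obstacle here beyond assembling the ball-geometry facts \eqref{nabla1}, \eqref{nabla2}, \eqref{6B} and the bounded-overlap property already established in this section.
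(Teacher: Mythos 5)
Your proposal is correct and follows essentially the same route as the paper: for ($i$) you locate a common index $j$ with $x\in 6B_j$, $u\in 4B_j$ and combine the triangle inequality with \eqref{6B}, and for ($ii$) you differentiate \eqref{def:eta} term by term, use \eqref{nabla1}--\eqref{nabla2} together with the comparability $1+|x|\simeq 1+|x_j|\simeq 1+|u|$ from \eqref{6B}, and convert $1+|x|$ into $|x-u|^{-1}$ via ($i$). The paper's write-up is terser (it does not spell out the bounded overlap, and it handles boundary points of $\mathrm{supp}\,\eta$ by continuity rather than by your observation that a nonzero gradient term already forces $x\in 6B_j$ and $u\in 4B_j$), but the substance is identical.
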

\begin{proof}
  ($i$)  Since  $\eta(x,u) > 0$, there exists a $j$ such that $\widetilde r_j (x) > 0$ and  $ r_j (u) > 0$.
  Thus $x \in 6B_j$ so that $1+|x| \simeq 1+|x_j|$ by \eqref{6B}, and  $u \in 4B_j$.
  We get
  \begin{equation*}
 |x-u| \le |x-x_j|  +  |u-x_j|    < \frac 6{1 +|x_j|} + \frac  4{1 +|x_j|}\lesssim  \frac 1{1 +|x|}.
\end{equation*}
 ($ii$) We can assume that $(x,u)$ is in the support of $\eta$, and by continuity the conclusion of  ($i$) still holds.
 The inequality for $\nabla_x\,\eta$ follows from  \eqref{nabla2} and ($i$). For  $\nabla_u\,\eta$ we apply
 \eqref{6B} to  $x$ and $u$ with $j$ as in  ($i$), to get $1 +|x| \simeq 1 +|u|$. Now we can use  \eqref{nabla1} and ($i$).
\end{proof}

\begin{lemma} \label{lemma2}
  If  $x$ and $u$ are two points with $|x-u| \le   \frac 13 \, \frac 1{1 +|x|}$, then
   \begin{equation} \label{xuclose}
 r_j (u) > 0 \qquad \Rightarrow  \qquad \widetilde r_j (x) = 1,  \qquad j=0,1,\dots,
  \end{equation}
  and $\eta(x,u) = 1$.
\end{lemma}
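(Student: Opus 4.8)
The plan is to show the two implications by tracing through the definitions of $r_j$ and $\widetilde r_j$ together with the radius bookkeeping established just before the statement. First I would fix $j$ with $r_j(u) > 0$; since $\mathrm{supp}\,r_j \subseteq 4B_j$, this forces $u \in 4B_j$, i.e.\ $|u - x_j| < 4/(1+|x_j|)$. The goal is to deduce $x \in 5B_j$, because $\widetilde r_j = 1$ on $5B_j$ by construction, which immediately gives $\widetilde r_j(x) = 1$. For this I need a triangle-inequality estimate $|x - x_j| \le |x-u| + |u-x_j|$, where the first term is controlled by hypothesis and the second by membership in $4B_j$.

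The one subtlety is that the hypothesis bounds $|x-u|$ in terms of $1+|x|$, while the ball $4B_j$ is measured in terms of $1+|x_j|$, so I must first relate these two scales. Since $u \in 4B_j \subseteq 6B_j$, property \eqref{6B} (applied with the point $u$ in place of $x$) yields $\tfrac17 < (1+|u|)/(1+|x_j|) < 7$. Combining $|x-u| \le \tfrac13\,(1+|x|)^{-1}$ with $|x-u| < 4/(1+|x_j|) \le 28/(1+|u|)$ lets me compare $1+|x|$ and $1+|u|$, hence $1+|x|$ and $1+|x_j|$; a short computation gives, say, $1+|x| \ge c(1+|x_j|)$ with an explicit constant, so that $|x-u| \le \tfrac13\,(1+|x|)^{-1} \lesssim (1+|x_j|)^{-1}$ with a small enough constant. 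Then $|x-x_j| < |x-u| + 4/(1+|x_j|) < 5/(1+|x_j|)$, which is exactly $x \in 5B_j$, and therefore $\widetilde r_j(x) = 1$. This establishes \eqref{xuclose}.

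For the final assertion $\eta(x,u) = 1$, recall $\eta(x,u) = \sum_{j} \widetilde r_j(x)\, r_j(u)$. By \eqref{xuclose}, every index $j$ contributing a nonzero term (i.e.\ with $r_j(u) > 0$) has $\widetilde r_j(x) = 1$, so $\eta(x,u) = \sum_{j:\, r_j(u)>0} r_j(u) = \sum_{j} r_j(u) = 1$, using $\sum_j r_j \equiv 1$. The main obstacle here is purely the constant-chasing in the scale comparison: one must be careful that the numerical slack between "$\le \tfrac13\,(1+|x|)^{-1}$" in the hypothesis and "$5B_j$" in the conclusion is actually large enough, given the factor $7$ lost in \eqref{6B} and the factor $4$ in $4B_j$. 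Everything else is routine triangle inequalities and the partition-of-unity identity $\sum_j r_j = 1$.
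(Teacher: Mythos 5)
Your overall architecture is the same as the paper's: reduce \eqref{xuclose} to the implication $u\in 4B_j \Rightarrow x\in 5B_j$ via the triangle inequality $|x-x_j|\le |x-u|+|u-x_j|$, which requires converting the hypothesis $|x-u|\le \tfrac13(1+|x|)^{-1}$ into $|x-u|\le (1+|x_j|)^{-1}$, i.e.\ $1+|x_j|\le 3(1+|x|)$; and the deduction of $\eta(x,u)=1$ from \eqref{xuclose} and $\sum_j r_j\equiv 1$ is correct and identical to the paper's. The problem is the scale comparison itself: you correctly identify it as ``the main obstacle'' but the route you propose does not clear it. Going through \eqref{6B} costs a factor $7$: from $u\in 4B_j\subseteq 6B_j$ you get $1+|x_j|<7(1+|u|)$, and since $|x-u|\le\tfrac13$ gives $1+|u|\le\tfrac43(1+|x|)$, you end up with $1+|x_j|<\tfrac{28}{3}(1+|x|)$, hence only $|x-u|\le\tfrac{28}{9}(1+|x_j|)^{-1}$ and $|x-x_j|<\bigl(\tfrac{28}{9}+4\bigr)(1+|x_j|)^{-1}\approx 7.1\,(1+|x_j|)^{-1}$. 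Even bypassing \eqref{6B} and comparing directly ($1+|x_j|\le 1+|u|+4\le 5(1+|u|)\le\tfrac{20}{3}(1+|x|)$) you only reach $|x-x_j|<\tfrac{56}{9}(1+|x_j|)^{-1}\approx 6.2\,(1+|x_j|)^{-1}$. Neither puts $x$ in $5B_j$; worse, since $\operatorname{supp}\widetilde r_j\subseteq 6B_j$, the first bound does not even guarantee $\widetilde r_j(x)>0$. So the assertion that ``a short computation gives \dots a small enough constant'' is false for the computation you describe.

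The paper closes exactly this gap with a case split on $|x_j|$. If $|x_j|\le 2$ then $1+|x_j|\le 3\le 3(1+|x|)$ trivially. If $|x_j|>2$ then the crucial point is that $|u-x_j|<4/(1+|x_j|)<4/3$ (rather than the crude bound $<4$), and together with $|x-u|\le\tfrac13$ this yields $1+|x_j|\le 1+|u-x_j|+|x-u|+|x|<1+\tfrac43+\tfrac13+|x|<3(1+|x|)$, which is precisely the constant needed. Your argument can be repaired by inserting this dichotomy in place of the appeal to \eqref{6B}; as written, the proof does not go through. (A minor additional slip: the inequality ``$|x-u|<4/(1+|x_j|)$'' in your scale comparison is unjustified --- presumably you meant $|u-x_j|<4/(1+|x_j|)$ --- though this is not the essential defect.)
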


\begin{proof}
The last conclusion is a consequence of \eqref{xuclose} and
  the definition of $\eta$.
  The implication \eqref{xuclose} follows from
    \begin{equation*}
u \in 4B_j  \qquad \Rightarrow  \qquad x \in 5B_j,
  \end{equation*}
  or equivalently,
   \begin{equation*}
  |u-x_j|    <  \frac  4{1 +|x_j|}  \qquad \Rightarrow  \qquad       |x-x_j|       <  \frac  5{1 +|x_j|}.              \end{equation*}
  But
     \begin{equation*}
|x-x_j| \le |x-u| +  |u-x_j|    < \frac 13 \, \frac 1{1 +|x|} + \frac  4{1 +|x_j|},
\end{equation*}
So it is enough to show that
    \begin{equation*}
 \frac 13 \, \frac 1{1 +|x|} \le \frac 1 {1 +|x_j|},
\end{equation*}
which is obvious if $|x_j| \le 2$. In case $|x_j| > 2$, we have
    \begin{equation*}
  1     +|x_j| \le 1+   |u-x_j| + |x-u| + |x| \le 1 +  \frac  4{1 +|x_j|} + \frac 13 + |x| < 1 +  \frac  43 + \frac 13 + |x|< 3(1 +|x|).
\end{equation*}

\end{proof}

  We now split the multiplier operator   $m_0(T)$  in a local and a global part.
  The local part  is
defined by
\begin{equation}\label{defdefmloc}
m_{0}(T)^{{\rm{loc}}} f(x)
 :=\sum_{j=0}^\infty \widetilde r_j (x)\,
  m_{0}(T)\left(f r_j\right)(x),  \qquad f \in L^1(\gamma_\infty).
\end{equation}
This sum is locally finite and so well defined.

It was proved in Proposition
\ref{conv0'}$(ii)$
that the  off-diagonal kernel of $m_0(T)$ is   $\mathcal M_{0}(x,u) $.
To find that of $m_{0}(T)^{{\rm{loc}}} $, take
$f\in L^1(\gamma_\infty)$.          
For almost all points $x\notin\text{ supp }f$, thus not in
 $\text{ supp }fr_j$ for any $j$, we have
\begin{equation*}
m_{0}(T)^{{\rm{loc}}} f(x)
 =\sum_{j=0}^\infty \widetilde r_j (x)\int \mathcal M_{0}(x,u)
 f(u)\, r_j(u)\,
 d\gamma_\infty (u),\end{equation*}
 where the sum   is again locally finite.
As a consequence,
\begin{equation*}
m_{0}(T)^{{\rm{loc}}} f(x)
 =
  \int \mathcal M_{0}(x,u) \,
 \eta (x,u)\,f(u)\,
 d\gamma_\infty (u),\end{equation*}
 and so the off-diagonal kernel of $m_{0}(T)^{{\rm{loc}}}$
is
\begin{equation}\label{eq:moloc}
{ \mathcal M}_{0}^{{\rm{loc}}}(x,u)=\mathcal M_{0}(x,u) \,
 \eta (x,u).
\end{equation}

We also define
\begin{equation*}
  m_{0}(T)^{{\rm{glob}}}=
   m_{0}(T) -
m_{0}(T)^{{\rm{loc}}}.
\end{equation*}
Its
off-diagonal kernel is
\begin{equation}\label{eq:moglob}
{ \mathcal M}_{0}^{{\rm{glob}}}(x,u)=
\mathcal M_{0}(x,u) \,(1-
 \eta (x,u)).
\end{equation}
Moreover, the next lemma says that          $m_{0}(T)^{{\rm{glob}}}$
is completely given by this kernel.

 \begin{lemma} \label{lemma3}
  For any  $f\in L^1(\gamma_\infty)$ and a.a. $x$, one has
   \begin{align}\label{globoper}
  m_{0}(T)^{{\rm{glob}}}f(x)=
  \int \mathcal M_{0}(x,u) \,
(1- \eta (x,u))\,f(u)\,
 d\gamma_\infty (u).
 \end{align}
 \end{lemma}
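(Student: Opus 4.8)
The plan is to verify \eqref{globoper} by comparing its right-hand side with the definition of $m_{0}(T)^{{\rm{glob}}} = m_0(T) - m_0(T)^{{\rm{loc}}}$. We already know two facts. First, from Proposition~\ref{conv0'}$(ii)$, the operator $m_0(T)$ agrees with integration against $\mathcal M_0$ off the support of $f$; but this does \emph{not} immediately give a formula valid for almost all $x$. Second, by \eqref{defdefmloc} and the computation leading to \eqref{eq:moloc}, the local part $m_0(T)^{{\rm{loc}}}f(x)$ equals $\int \mathcal M_0(x,u)\,\eta(x,u)\,f(u)\,d\gamma_\infty(u)$ for $x \notin \operatorname{supp} f$, and again we need this for a.a.\ $x$. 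So the task reduces to showing that the difference kernel $\mathcal M_0(x,u)(1-\eta(x,u))$ is locally integrable enough that the operator it defines coincides a.e.\ with $m_0(T)^{{\rm{glob}}}f$.

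The key observation is that the factor $1-\eta(x,u)$ kills the singularity of $\mathcal M_0$ near the diagonal. Indeed, by Lemma~\ref{lemma2}, if $|x-u| \le \tfrac13\,(1+|x|)^{-1}$ then $\eta(x,u)=1$, so $1-\eta(x,u)=0$ there; hence $\mathcal M_0(x,u)(1-\eta(x,u))$ vanishes whenever $x$ and $u$ are that close. On the region where $1-\eta(x,u)\ne 0$ we have $|x-u| \gtrsim (1+|x|)^{-1}$, and the bound \eqref{M-00'}, namely $|\mathcal M_0(x,u)| \lesssim e^{R(x)}(1+|x|)^C|x-u|^{-C}$, gives
\begin{equation*}
  |\mathcal M_0(x,u)\,(1-\eta(x,u))| \lesssim e^{R(x)}\,(1+|x|)^{C+C'},
\end{equation*}
a bound that is locally bounded in $x$ and, for fixed $x$, bounded in $u$ (one can even insert the Gaussian decay of $K_t$ to get integrability in $u$, but boundedness plus finiteness of $\gamma_\infty$ suffices). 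Therefore, for each $f \in L^1(\gamma_\infty)$, the integral on the right of \eqref{globoper} converges absolutely for a.a.\ $x$ and defines a locally bounded function of $x$.

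The final step is the identification. Write $f = f_1 + f_2$ where $f_1$ is the restriction of $f$ to a large ball and $f_2 = f - f_1$; or more simply, fix a test point $x$ and observe that the contributions to all three operators $m_0(T)$, $m_0(T)^{{\rm{loc}}}$ and the integral operator in \eqref{globoper} from the part of $f$ supported near $x$ cancel, because of the vanishing of $1-\eta$ near the diagonal established above, while the contributions from the part of $f$ supported away from $x$ are handled by Proposition~\ref{conv0'}$(ii)$ and \eqref{eq:moloc}. Concretely: for a.a.\ $x$, choosing any $\delta < \tfrac13(1+|x|)^{-1}$, split $f = f\mathbf 1_{B(x,\delta)} + f\mathbf 1_{B(x,\delta)^c}$. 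For the second piece, $x$ lies outside its support, so Proposition~\ref{conv0'}$(ii)$ gives $m_0(T)(f\mathbf 1_{B(x,\delta)^c})(x) = \int \mathcal M_0(x,u)f(u)\mathbf 1_{B(x,\delta)^c}(u)\,d\gamma_\infty(u)$, and likewise $m_0(T)^{{\rm{loc}}}(f\mathbf 1_{B(x,\delta)^c})(x) = \int \mathcal M_0(x,u)\eta(x,u)f(u)\mathbf 1_{B(x,\delta)^c}(u)\,d\gamma_\infty(u)$; subtracting yields exactly $\int \mathcal M_0(x,u)(1-\eta(x,u))f(u)\mathbf 1_{B(x,\delta)^c}(u)\,d\gamma_\infty(u)$. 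For the first piece $f\mathbf 1_{B(x,\delta)}$, Lemma~\ref{lemma2} shows $r_j(u) > 0 \Rightarrow \widetilde r_j(x) = 1$ for all $u \in B(x,\delta)$, which forces $m_0(T)^{{\rm{loc}}}(f\mathbf 1_{B(x,\delta)})(x) = m_0(T)(f\mathbf 1_{B(x,\delta)})(x)$ (each term $\widetilde r_j(x)\,m_0(T)(f\mathbf 1_{B(x,\delta)}r_j)(x) = m_0(T)(f\mathbf 1_{B(x,\delta)}r_j)(x)$ and summing over $j$ with $\sum_j r_j = 1$ recovers the full operator), so the global operator contributes $0$ from this piece — matching the vanishing of $(1-\eta)\mathbf 1_{B(x,\delta)}$ in the integral. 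Adding the two pieces gives \eqref{globoper}.

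\textbf{Main obstacle.} The delicate point is that Proposition~\ref{conv0'}$(ii)$ and the kernel formula \eqref{eq:moloc} are only asserted for $x$ outside the support of the relevant function, whereas \eqref{globoper} must hold for a.a.\ $x \in \R^n$, including points inside $\operatorname{supp} f$. The decomposition $f = f\mathbf 1_{B(x,\delta)} + f\mathbf 1_{B(x,\delta)^c}$ with $\delta$ chosen from Lemma~\ref{lemma2} is exactly what bridges this gap: it isolates the ``bad'' near-diagonal part on which the global kernel vanishes anyway, so no information about $m_0(T)$ \emph{on} the diagonal is ever needed. One must be slightly careful that the exceptional null set where the identities fail can be chosen uniformly, but since it comes from the (countably many, locally finitely many) invocations of Proposition~\ref{conv0'}$(ii)$ applied to $f r_j$, a union of null sets does the job.
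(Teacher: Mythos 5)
Your proposal is correct and follows essentially the same route as the paper: split $f$ into a piece supported near the reference point, on which Lemma~\ref{lemma2} forces $m_0(T)^{\mathrm{loc}}$ to agree with $m_0(T)$ (so the global part contributes nothing, matching the vanishing of $1-\eta$ there), and a piece supported away from it, handled by the off-diagonal kernel formulas of Proposition~\ref{conv0'}$(ii)$ and \eqref{eq:moloc}. The one wrinkle is that you cut with the $x$-dependent ball $B(x,\delta)$, so Proposition~\ref{conv0'}$(ii)$ is invoked for an uncountable family of functions $f\mathbf 1_{B(x,\delta)^c}$, each carrying its own null set; your closing remark attributes the exceptional set to the countably many functions $f r_j$, which does not quite cover this. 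The paper sidesteps the issue by fixing a ball $B=B\bigl(\xi,\tfrac18(1+|\xi|)^{-1}\bigr)$, proving the identity for a.a.\ $x$ in that single ball with the single splitting $f=f\chi_B+f(1-\chi_B)$, and then covering $\R^n$ by countably many such balls; your argument is repaired the same way by restricting to a countable cover.
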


\begin{proof}
 Our strategy will be to take a point $\xi \in \mathbb R^n$ and verify \eqref{globoper} a.e. in
 the ball
 $$
 B := B\left(\xi, \frac 18 \, \frac 1{1 +|\xi|}\right).
 $$
 If $x,\: u \in B$ then
 \begin{equation*}
  |x-u| < \frac 14 \, \frac 1{1 +|\xi|},
\end{equation*}
and we have
 \begin{equation*}
1+|x| \le 1+|\xi| +    |x-\xi| <     1+|\xi| +        \frac 18 \, \frac 1{1 +|\xi|} \le  \frac 98\, (1+|\xi|).
\end{equation*}
 Combining these two estimates, we get
  \begin{equation}\label{x-u}
 |x-u| < \frac 14 \, \frac 98 \,\frac 1{1 +|x|} < \frac 13 \,\frac 1{1 +|x|}.
\end{equation}
Thus Lemma \ref{lemma2} applies. We compute $m_{0}(T)^{{\rm{loc}}}(f\chi_B)(x)$ for $x \in B$, by means of the definition
 \eqref{defdefmloc} of  $m_{0}(T)^{{\rm{loc}}}$. If for some $j$, the product  $f\chi_B\,r_j$ does not vanish identically, there exists a point $u \in B$ with $r_j(u) > 0$. Then  Lemma \ref{lemma2} says that  $\widetilde r_j(x) = 1$, and it follows that for $x \in B$
 \begin{align*}
   m_{0}(T)^{{\rm{loc}}}(f\chi_B)(x) = \sum_j m_{0}(T)(f\chi_B\,r_j)(x)
 =\,& \, m_{0}(T)^{{\rm{loc}}}\left(\sum_j
  f \chi_B\,r_j\right)(x) \\
   =\,& \, m_{0}(T)(f\chi_B)(x),
 \end{align*}
  since in the sums here, only a finite number of terms are nonzero
 The equality obtained implies that $m_{0}(T)^{{\rm{glob}}}(f\chi_B)(x) = 0$.

 When we next apply the operator $m_{0}(T)^{{\rm{glob}}}$
 to $f(1-\chi_B)$, we can use the off-diagonal kernel in \eqref{eq:moglob}. As a result, we have for a.a. $x \in B$
 \begin{multline*}
  m_{0}(T)^{{\rm{glob}}}f(x)=
  m_{0}(T)^{{\rm{glob}}} \big( f(1-\chi_B)\big)(x) = \\
   \int \mathcal M_{0}(x, u) \,
(1- \eta (x,u))\,f(u)\, (1-\chi_B(u))
\, d\gamma_\infty (u) =
 \\
    \int \mathcal M_{0}(x,u) \,
(1- \eta (x,u))\,f(u)\,
 d\gamma_\infty (u)
 -  \int \mathcal M_{0}(x,u) \,
(1- \eta (x,u))\,f(u)\, \chi_B(u)
\, d\gamma_\infty (u).
 \end{multline*}
 The last integral  is 0, since here $\eta (x,u) = 1$ in view of \eqref{x-u} and
   Lemma \ref{lemma2}. We have verified \eqref{globoper} a.e. in the ball $B$ and thus almost everywhere.
 \end{proof}

     \subsection{Further simplifications}\label{simply}


The following proposition is complementary to  Proposition \ref{inner0} and deals  with the interior of  $\mathcal C_\alpha$
 when $t<1$.
   \begin{proposition}\label{inner}
Let $x\in \R^n$ satisfy
   $R(x) < \frac12 \log \alpha$, where $\alpha > 2$.
   Then for all $u\in \R^n$
 \[
 \mathcal M_0^{\rm{glob}}(x,u)   
\lesssim { \alpha}.
\]
\end{proposition}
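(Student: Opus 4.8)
The plan is to reduce the estimate on $\mathcal M_0^{\rm{glob}}$ to the already established pointwise bound on the full kernel $\mathcal M_0$, exploiting the geometric restriction imposed by the cutoff $1-\eta(x,u)$. Recall from \eqref{eq:moglob} that $\mathcal M_0^{\rm{glob}}(x,u) = \mathcal M_0(x,u)\,(1-\eta(x,u))$, and by \eqref{eta_ineq} the factor $1-\eta(x,u)$ lies in $[0,1]$, so it suffices to bound $|\mathcal M_0(x,u)|$ on the set where $\eta(x,u) < 1$. The key point is Lemma~\ref{lemma2}: if $|x-u| \le \frac13\,\frac1{1+|x|}$ then $\eta(x,u) = 1$; contrapositively, on the support of $1-\eta$ we must have $|x-u| > \frac13\,\frac1{1+|x|}$, i.e.\ $|x-u|^{-1} \lesssim 1+|x|$.

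Next I would feed this lower bound on $|x-u|$ into the estimate \eqref{M-00'} from Proposition~\ref{conv0'}$(i)$, which gives
\[
|\mathcal M_0(x,u)| \lesssim e^{R(x)}\,(1+|x|)^C\,|x-u|^{-C}.
\]
Using $|x-u|^{-C} \lesssim (1+|x|)^C$ on the relevant set, this becomes $|\mathcal M_0(x,u)| \lesssim e^{R(x)}\,(1+|x|)^{C'}$ for a new constant $C'$. Finally I would invoke the hypothesis $R(x) < \frac12\log\alpha$: this yields $e^{R(x)} < \sqrt\alpha$, and since $R(x) \simeq |x|^2$ (because $R(x) = \frac12|x|_Q^2$ and $|x|_Q \simeq |x|$), we also get $|x| \lesssim \sqrt{\log\alpha}$, hence $(1+|x|)^{C'} \lesssim (1+\log\alpha)^{C'/2}$. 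Therefore
\[
|\mathcal M_0^{\rm{glob}}(x,u)| \lesssim \sqrt\alpha\,(1+\log\alpha)^{C'/2} \lesssim \alpha,
\]
the last step because $\sqrt\alpha\,(\log\alpha)^{C'/2} = o(\alpha)$ as $\alpha \to \infty$ and $\alpha > 2$; absorbing constants gives the claimed bound $\lesssim \alpha$.

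There is no serious obstacle here — the proposition is a soft consequence of two results already proved, Proposition~\ref{conv0'}$(i)$ and Lemma~\ref{lemma2}, combined with the elementary observation $R(x)\simeq|x|^2$. The only mild subtlety is bookkeeping with the unspecified constant $C$ in \eqref{M-00'}: one must make sure that the polynomial factor $(1+|x|)^{C}$ together with $|x-u|^{-C}\lesssim(1+|x|)^{C}$ produces only a fixed power of $\log\alpha$, which is then dominated by the $\sqrt\alpha$ coming from $e^{R(x)}$. Since $\sqrt\alpha$ beats any power of $\log\alpha$, this works regardless of the precise value of $C$, so the argument is robust. I would present it in a few lines, citing \eqref{eq:moglob}, \eqref{eta_ineq}, Lemma~\ref{lemma2}, and \eqref{M-00'} in that order.
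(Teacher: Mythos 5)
Your proposal is correct and follows essentially the same route as the paper's proof: use Lemma~\ref{lemma2} to deduce $|x-u|\gtrsim 1/(1+|x|)$ wherever $\eta(x,u)<1$, insert this into the bound \eqref{M-00'} to get $|\mathcal M_0^{\rm{glob}}(x,u)|\lesssim e^{R(x)}(1+|x|)^{C}$, and then use $R(x)<\tfrac12\log\alpha$ so that $e^{R(x)}<\sqrt\alpha$ dominates the remaining logarithmic power. The paper states the final step more tersely, but your explicit bookkeeping with $|x|\lesssim\sqrt{\log\alpha}$ is exactly what it implicitly relies on.
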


\begin{proof}
If $\mathcal M_0^{\rm{glob}}(x,u) \ne 0$ then  $\eta(x,u)<1$, and Lemma \ref{lemma2} implies that
$ |x-u| \gtrsim \frac1{1+|x|_Q}$.
From  \eqref{M-00'} we then obtain
 \begin{equation*}
   \mathcal M_0^{\rm{glob}}(x,u) \lesssim e^{R(x)}\, (1+| x|)^C \lesssim \alpha,
 \end{equation*}
    and the proposition is verified.
 \end{proof}

 Thus we need to take  the region $\{x:\, R(x) < \frac12  \log \alpha\}$ into account only when considering $m_0(T)^{\rm{loc}}$.

\medskip

\section{The local region}\label{local region}

In this section we shall prove the weak type (1,1) of the operator
$m_{0}(T)^{{\rm{loc}}}$.

In order to apply   Calder\'on--Zygmund theory, we pass to Lebesgue measure.
Set
\begin{equation}  \label{p_Tdef}
 q_T f(x)=
 e^{-R(x)}\,
m_{0}(T)^{{\rm{loc}}}\big ( f(\cdot)\, e^{R(\cdot)}\,\big)(x).
\end{equation}

The relationship between $q_T$ and   $m_{0}(T)^{{\rm{loc}}}$  is clarified by the following result.

\begin{proposition}\label{prop81}
 If  $q_T$ is of   weak type $(1,1)$ with respect to Lebesgue measure, then $m_{0}(T)^{{\rm{loc}}}$  is of   weak type $(1,1)$ with respect to the invariant measure.
\end{proposition}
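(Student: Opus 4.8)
The plan is to unwind the definition \eqref{p_Tdef} and reduce the weak-type $(1,1)$ bound for $m_0(T)^{\rm loc}$ with respect to $d\gamma_\infty$ to the corresponding bound for $q_T$ with respect to Lebesgue measure. First I would start from an arbitrary $g \in L^1(dx)$ and set $f = g\, e^{-R(\cdot)}$, so that $f \in L^1(\gamma_\infty)$ up to the normalizing constant $(2\pi)^{-n/2}(\det Q_\infty)^{-1/2}$, and in fact $\|g\|_{L^1(dx)} \simeq \|f\|_{L^1(\gamma_\infty)}$. By \eqref{p_Tdef} we have $q_T g(x) = e^{-R(x)}\, m_0(T)^{\rm loc} f(x)$, hence for any $\alpha > 0$
\begin{equation*}
\{x : |q_T g(x)| > \alpha\} = \{x : e^{-R(x)}\,|m_0(T)^{\rm loc} f(x)| > \alpha\} \supseteq \{x : |m_0(T)^{\rm loc} f(x)| > \alpha\},
\end{equation*}
using $e^{-R(x)} \le 1$. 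Conversely, this containment is exactly what I need: assuming $q_T$ is of weak type $(1,1)$ with respect to Lebesgue measure, the set on the left has Lebesgue measure $\lesssim \|g\|_{L^1(dx)}/\alpha$.

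The key point is then to compare the Lebesgue measure of $\{x : |m_0(T)^{\rm loc} f(x)| > \alpha\}$ with its $d\gamma_\infty$ measure. Since $d\gamma_\infty = (2\pi)^{-n/2}(\det Q_\infty)^{-1/2} e^{-R(x)}\,dx$ and $e^{-R(x)} \le 1$, we have $\gamma_\infty(A) \lesssim |A|$ for every measurable $A$. Therefore
\begin{equation*}
\gamma_\infty\{x : |m_0(T)^{\rm loc} f(x)| > \alpha\} \lesssim \big|\{x : |m_0(T)^{\rm loc} f(x)| > \alpha\}\big| \lesssim \frac{\|g\|_{L^1(dx)}}{\alpha} \simeq \frac{\|f\|_{L^1(\gamma_\infty)}}{\alpha},
\end{equation*}
which is precisely the weak-type $(1,1)$ inequality for $m_0(T)^{\rm loc}$ with respect to $d\gamma_\infty$. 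Since $f \mapsto g = f e^{R(\cdot)}$ is a bijection between $L^1(\gamma_\infty)$ and $L^1(dx)$ with comparable norms, this covers all $f \in L^1(\gamma_\infty)$.

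The main subtlety — and the reason the localization was carried out in the first place — is that the naive bound $\gamma_\infty(A) \le C|A|$ would be far too lossy if applied directly to the full operator, because it throws away the Gaussian decay; but for the \emph{local} operator it is harmless, since $m_0(T)^{\rm loc}f(x)$ only sees values of $f$ in a ball of radius $\simeq 1/(1+|x|)$ around $x$, so on the support of the relevant kernel $e^{-R(x)} \simeq e^{-R(u)}$ by \eqref{Rconst} and no decay is being wasted. Concretely this comparability is already built into the conjugation in \eqref{p_Tdef}: the factor $e^{-R(x)}$ out front and $e^{R(\cdot)}$ inside nearly cancel on the diagonal region where the kernel lives, so $q_T$ genuinely is a Calder\'on--Zygmund-type operator on $(\R^n, dx)$ of the kind one can analyze in Section \ref{local region}. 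Thus the only thing this proposition requires is the elementary one-sided comparison $d\gamma_\infty \lesssim dx$ together with the change of variables $f \leftrightarrow f e^{R}$; I do not expect any real obstacle here, and the proof is short.
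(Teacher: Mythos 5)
There is a genuine gap, and it occurs at the very first displayed step. You claim that, since $e^{-R(x)}\le 1$,
\begin{equation*}
\{x : e^{-R(x)}\,|m_0(T)^{\rm loc} f(x)| > \alpha\} \supseteq \{x : |m_0(T)^{\rm loc} f(x)| > \alpha\}.
\end{equation*}
This inclusion is backwards: because $e^{-R(x)}\le 1$ we only have $e^{-R(x)}|m_0(T)^{\rm loc}f(x)|\le |m_0(T)^{\rm loc}f(x)|$, so the level set of $q_Tg=e^{-R}\,m_0(T)^{\rm loc}f$ is \emph{contained in} that of $m_0(T)^{\rm loc}f$, not the other way around. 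The set you need to control is $\{|m_0(T)^{\rm loc}f|>\alpha\}=\{|q_Tg|>\alpha e^{-R(x)}\}$, a level set with a variable threshold that is exponentially small where $|x|$ is large; the weak type bound for $q_T$ at the fixed level $\alpha$ says nothing about the points where $\alpha e^{-R(x)}<|q_Tg(x)|\le\alpha$, and that is exactly where the whole difficulty of the Gaussian setting sits. The rest of your argument (the one-sided comparison $\gamma_\infty(A)\lesssim|A|$ and the norm equivalence $\|g\|_{L^1(dx)}\simeq\|f\|_{L^1(\gamma_\infty)}$) is correct but cannot compensate for this; no global conjugation by $e^{\pm R}$ can work, precisely because $e^{R}$ is unbounded.

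Your closing remark actually contains the right idea, but it must be made quantitative and it is where the proof lives, not an afterthought. The paper's argument works piece by piece: starting from the definition \eqref{defdefmloc} and the bounded overlap of the $\widetilde r_j$, one has
\begin{equation*}
\|m_{0}(T)^{{\rm{loc}}} f\|_{L^{1,\infty}(\gamma_\infty)} \lesssim \sum_{j}\bigl\|\widetilde r_j\, m_{0}(T)(f r_j)\bigr\|_{L^{1,\infty}(\gamma_\infty)}
= \sum_{j}\bigl\|e^{R(x)}\,\widetilde r_j\, q_T(f r_j e^{-R(\cdot)})\bigr\|_{L^{1,\infty}(\gamma_\infty)},
\end{equation*}
and then \eqref{Rconst} is used \emph{twice} on the support of $\widetilde r_j$: once to pull out the factor $e^{R(x)}\simeq e^{R(x_j)}$ as a genuine constant, and once to convert the $L^{1,\infty}(\gamma_\infty)$ quasinorm into an $L^{1,\infty}(dx)$ quasinorm, the two factors of $e^{\pm R(x_j)}$ cancelling. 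Only at that point can the hypothesis on $q_T$ be applied, giving the bound $\|f r_j e^{-R(\cdot)}\|_{L^1(du)}=\|f r_j\|_{L^1(\gamma_\infty)}$ for each $j$, and the sum over $j$ equals $\|f\|_{L^1(\gamma_\infty)}$ since $\sum_j r_j=1$. You should rewrite your proof along these lines; the exact constancy of the weight on each ball, not the crude bound $e^{-R}\le1$, is what makes the transference between the two measures legitimate.
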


\begin{proof}
By \eqref{defdefmloc} and the bounded overlap of the $\widetilde r_j$, we have
\begin{align*}
\|m_{0}(T)^{{\rm{loc}}} f\|_{L^{1,\infty}(\gamma_\infty)} &=
\left\|\sum_{j=0}^\infty \widetilde r_j
  m_{0}(T)\left(f r_j\right)
\right\|_{L^{1,\infty}(\gamma_\infty)} \\  %
&\lesssim
\sum_{j=0}^\infty
\left\|\widetilde r_j \,
  m_{0}(T)\left(f r_j\right)
\right\|_{L^{1,\infty}(\gamma_\infty)}.
\end{align*}
 The last sum may be rewritten as
\begin{align*}
&\sum_{j=0}^\infty
\left\|
e^{R(x)}\,
e^{-R(x)}\,
  \widetilde r_j (x)\,
  m_{0}(T) \left(f r_j
\right)(x)
\right\|_{L^{1,\infty}(\gamma_\infty)}\\
=&
\sum_{j=0}^\infty
\left\|
e^{R(x)}\,\widetilde r_j(x)\,
q_T \left(f \,r_j \, e^{-R(\cdot)}
\right)(x)
\right\|_{L^{1,\infty}(\gamma_\infty)}\\
\simeq &
 \sum_{j=0}^\infty
e^{R(x_j)}\,\left\| \widetilde r_j\,
q_T \left(f \,r_j\,  e^{-R(\cdot)}
\right)
\right\|_{L^{1,\infty}(\gamma_\infty)}
\simeq 
\sum_{j=0}^\infty
\left\|\widetilde r_j\,
q_T \left(f\, r_j \, e^{-R(\cdot)}
\right)
\right\|_{L^{1,\infty}(dx)};
\end{align*}
in the last two steps we used \eqref{Rconst}.
The hypothesis of the proposition implies that the last sum is dominated by constant times
\begin{align*}
\sum_{j=0}^\infty
\left\|
f \,r_j\,  e^{-R(\cdot)}
\right\|_{L^1(du)} =
\sum_{j=0}^\infty
\|
f \,r_j
\|_{L^1(\gamma_\infty)} =
 \|f\|_{L^1(\gamma_\infty)},
\end{align*}
 which proves the assertion.
\end{proof}

We will now
apply  Calder\'on-Zygmund theory to the operator $q_T$, in order to prove its weak type $(1,1)$ with respect to Lebesgue measure. First we verify the  $L^2$ boundedness.

\medskip


\begin{lemma}\label{CalderonZ2}
The operator $ q_T$ is bounded on $L^2(dx)$.
\end{lemma}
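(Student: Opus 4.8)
\textbf{Proof plan for Lemma \ref{CalderonZ2}.}

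The plan is to reduce the $L^2(dx)$ boundedness of $q_T$ to the $L^2(\gamma_\infty)$ boundedness of $m_0(T)^{\mathrm{loc}}$, which in turn follows from the already-known $L^2_0(\gamma_\infty)$ boundedness of $m(T)$ together with the uniform boundedness of the semigroup. First I would recall that, since the generalized eigenspaces span $L^2_0(\gamma_\infty)$ and $m(T)$ is bounded there by \cite[Lemma 3.7]{Carbonaro-Oliver}, both $m_1(T)$ and $m_0(T) = m(T) - m_1(T)$ are bounded on $L^2_0(\gamma_\infty)$; the operator $m_1(T)$ is in fact bounded on all of $L^2(\gamma_\infty)$ by the kernel bound \eqref{M-1} in Proposition \ref{Meps}, and $\mathcal L$ annihilates the one-dimensional space $\mathcal E_0$, so $m_0(T)$ extends to a bounded operator on $L^2(\gamma_\infty)$. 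Then, using the definition \eqref{defdefmloc} of the local part together with the bounded overlap of the $\widetilde r_j$ and of the $r_j$, and the fact that $\|\widetilde r_j\|_\infty \le 1$, $\|r_j\|_\infty \le 1$, one gets
\begin{align*}
\|m_0(T)^{\mathrm{loc}} g\|_{L^2(\gamma_\infty)}^2
&= \Big\| \sum_j \widetilde r_j\, m_0(T)(g r_j)\Big\|_{L^2(\gamma_\infty)}^2
\lesssim \sum_j \|m_0(T)(g r_j)\|_{L^2(\gamma_\infty)}^2 \\
&\lesssim \sum_j \|g r_j\|_{L^2(\gamma_\infty)}^2 \lesssim \|g\|_{L^2(\gamma_\infty)}^2,
\end{align*}
where in the first and last steps the bounded overlap replaces the square of a sum by a sum of squares.

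It then remains to transfer this to $q_T$ and Lebesgue measure. From the definition \eqref{p_Tdef}, $q_T f = e^{-R}\, m_0(T)^{\mathrm{loc}}(f e^{R})$, so setting $g = f e^{R}$ and using $\|q_T f\|_{L^2(dx)}^2 = \int |m_0(T)^{\mathrm{loc}} g|^2\, e^{-2R}\,dx$ would naively cost a factor $e^{-2R}$ versus the Gaussian weight $e^{-R}$; this is where the localization is essential. Because $m_0(T)^{\mathrm{loc}} g$ is built from the pieces $\widetilde r_j\, m_0(T)(g r_j)$, each supported in $6B_j$, and $e^{-R(x)} \simeq e^{-R(x_j)}$ on $\mathrm{supp}\,\widetilde r_j$ by \eqref{Rconst}, I would estimate $\|q_T f\|_{L^2(dx)}^2$ by $\sum_j e^{-R(x_j)} \|\widetilde r_j\, m_0(T)(g r_j)\|_{L^2(\gamma_\infty)}^2$, then by $\sum_j e^{-R(x_j)}\|g r_j\|_{L^2(\gamma_\infty)}^2$ using the $L^2(\gamma_\infty)$ bound for $m_0(T)$, and finally by $\sum_j \|g r_j\|_{L^2(e^{-2R}dx)}^2 \simeq \sum_j \|f r_j\|_{L^2(dx)}^2 \lesssim \|f\|_{L^2(dx)}^2$, again invoking \eqref{Rconst} to match the two Gaussian densities on the relevant supports and the bounded overlap of the $r_j$.

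The main obstacle I anticipate is not any single estimate but keeping the weight bookkeeping honest: one must apply \eqref{Rconst} twice — once on $\mathrm{supp}\,\widetilde r_j$ to pull $e^{-R(x)}$ out as the constant $e^{-R(x_j)}$ after applying $m_0(T)$, and once on $\mathrm{supp}\,r_j$ to reabsorb that constant into $e^{-R(u)}$ before converting back to Lebesgue measure — and one must be careful that the "off-diagonal kernel" description of $m_0(T)^{\mathrm{loc}}$ is not literally an integral operator on the diagonal, so the argument should be phrased at the level of the operator $m_0(T)$ acting on $g r_j$ rather than through its kernel. This is exactly the pattern already used in the proof of Proposition \ref{prop81} with $L^{1,\infty}$ replaced by $L^2$, so the write-up should be short.
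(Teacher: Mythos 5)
Your main computation is the same as the paper's: pull the sum out of the square using the bounded overlap of the $\widetilde r_j$, use \eqref{Rconst} on $\mathrm{supp}\,\widetilde r_j$ to convert $e^{-2R(x)}\,dx$ into $e^{-R(x_j)}\,d\gamma_\infty(x)$, apply the $L^2(\gamma_\infty)$ boundedness of $m_0(T)$ to each piece $f\,r_j\,e^{R(\cdot)}$, and then use \eqref{Rconst} again on $\mathrm{supp}\,r_j$ to return to Lebesgue measure and sum. Your intermediate display proving $L^2(\gamma_\infty)$ boundedness of $m_0(T)^{\mathrm{loc}}$ is redundant (the weighted computation subsumes it), but harmless.

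There is, however, one genuinely flawed step in your reduction: the claim that $m_1(T)$ is bounded on all of $L^2(\gamma_\infty)$ ``by the kernel bound \eqref{M-1}''. The bound $|\mathcal M_1(x,u)|\lesssim e^{R(x)}$ only yields $|m_1(T)f(x)|\lesssim e^{R(x)}\,\|f\|_{L^1(\gamma_\infty)}$, and $e^{R(x)}$ is not in $L^2(\gamma_\infty)$ (nor does a Schur test close, since $\int e^{R(x)}\,d\gamma_\infty(x)=\int dx=\infty$); one would need the sharper estimate \eqref{vv} with the Gaussian decay in $|\tilde x-\tilde u|$ and a polar-coordinates argument. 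The detour through $m_0(T)=m(T)-m_1(T)$ is in any case unnecessary: $m_0(\lambda)=\lambda\int_0^{+\infty}\varphi(t)\chi_{(0,1)}(t)\,e^{-t\lambda}\,dt$ is itself of Laplace transform type, so \cite[Lemma 3.7]{Carbonaro-Oliver} applies to it directly — this is exactly how the paper obtains the $L^2(\gamma_\infty)$ bound for $m_0(T)$. With that substitution your argument is complete and coincides with the paper's proof.
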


\begin{proof}
 Starting with \eqref{p_Tdef} and \eqref{defdefmloc},  we then apply the bounded overlap of the  $\widetilde r_j$ and \eqref{Rconst}.                  
 We get
\begin{align*}
 \int |q_T f(x)|^2dx&=
\int \left|
\sum_{j=0}^\infty e^{-R(x)}\, \widetilde r_j (x)
  m_{0}(T)\left(f\, r_j\, e^{R(\cdot)}\right)(x)
\right|^2\, dx\\
&\lesssim \sum_{j=0}^\infty  \int \left|  e^{-R(x)}\, \widetilde r_j (x)
  m_{0}(T)\left(f\, r_j\, e^{R(\cdot)}\right)(x)
\right|^2\, dx\\
&\lesssim \sum_{j=0}^\infty  e^{-R(x_j)}\, \int \Big|
\widetilde r_j (x)
  m_{0}(T)\left(f \,r_j\, e^{R(\cdot)}\right)(x)
\Big|^2 \, d\gamma_\infty (x)
\\
&\lesssim \sum_{j=0}^\infty  e^{-R(x_j)}\, \int \Big|
  m_{0}(T)\left(f\, r_j \,e^{R(\cdot)}\right)(x)
\Big|^2 \, d\gamma_\infty (x).
\end{align*}

 For $m_{0}(T)$, which is of Laplace transform type, the $L^2$ boundedness with respect to the invariant measure
follows from  \cite[Lemma~3.7]{Carbonaro-Oliver}
(we remark that this boundedness  is also a consequence of some results in \cite{CFMP1} and \cite{CFMP2}, which can be applied here
since  \cite[Lemma 2.2]{MPRS} exhibits a linear change of coordinates in $\R^n$  reducing the setting to the case where  $Q = I$ and $Q_\infty$  is a diagonal matrix).
As a consequence of the above, we have
\begin{align*}
 \int |q_Tf(x)|^2\, dx
 &\lesssim \sum_{j=0}^\infty  e^{-R(x_j)}\, \int \Big|
 f(u) \,r_j(u)\, e^{R(u)}
\Big|^2 \, d\gamma_\infty (u)\\
 &\simeq \sum_{j=0}^\infty   \int \big|
 f(u)\, r_j (u)
\big|^2\, du
\le   \int |
 f (u)
|^2 \,du,
\end{align*}
concluding the proof.
\end{proof}

We need a lemma from \cite{CCS3}.
\begin{lemma}
\label{lemma-integral_dt}
Let $p,\: r\ge 0$ with $p+ r/2 > 1 $. Assume that $\eta(x,u)> 0$  and  $x\neq u$. Then for $\delta>0$
\begin{equation} \label{propclaim}
\int_0^{1}
t^{-p}   \exp\left(-\delta\,\frac{|u- D_t \,x |^2}t\right) |x|^{r} \,  dt\le C\,{|u-x|^{-2p-r+2}}.
\end{equation}
Here the constant $C$ may depend on  $\delta,\: p$ and   $r$, in addition to   $n$, $Q$ and  $B$.
\end{lemma}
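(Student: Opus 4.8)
The plan is to reduce the integral with $|u-D_t x|$ to one with $|u-x|$, using the hypothesis $\eta(x,u)>0$ to control the difference between these two quantities, and then to evaluate a one-variable integral by a standard substitution. First I would invoke Lemma~\ref{lemma1}$(i)$: since $\eta(x,u)>0$ we have $|x-u|\lesssim 1/(1+|x|)$, hence $|x|\,|x-u|\lesssim 1$, and in particular $|x|\lesssim 1/|x-u|$ (when $x\ne u$). This already lets us replace the factor $|x|^{r}$ by $C\,|x-u|^{-r}$ and pull it out of the integral, so that it remains to bound
\begin{equation*}
|x-u|^{-r}\int_0^{1} t^{-p}\,\exp\!\left(-\delta\,\frac{|u-D_t x|^2}{t}\right)dt.
\end{equation*}

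Next I would compare $|u-D_t x|$ with $|u-x|$. Writing $u-D_t x = (u-x)+(x-D_t x)$ and using Lemma~\ref{differ} (valid since $t\le 1$), we get $|x-D_t x|\simeq t\,|x|\lesssim t/|x-u|\cdot|x-u|^2$... more cleanly: $|x-D_t x|\lesssim t|x|$, and since $|x|\lesssim 1/|x-u|$ is not quite what is needed here, I would instead argue that for $t$ small relative to $|x-u|\,(1+|x|)$ the term $|x-D_t x|\lesssim t|x| \le t(1+|x|) \lesssim |x-u|/2$ whenever $t\le c|x-u|(1+|x|)$; but $|x-u|(1+|x|)\gtrsim$ a constant is false in general, so the cleaner route is: split at $t_* := |x-u|$ does not work directly either. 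The robust approach is to use instead that $|u-D_t x|\gtrsim |u-x| - Ct|x|$ and, on the range where $Ct|x|\le |u-x|/2$, conclude $|u-D_tx|\ge |u-x|/2$; on the complementary range $t> c|u-x|/|x| \gtrsim c|u-x|^2$ (again by $|x|\lesssim 1/|u-x|$) the exponential is harmless and one bounds $\int t^{-p}\,dt$ over $t>c|u-x|^2$ directly, which is $\lesssim (|u-x|^2)^{1-p}$ when $p>1$, and is absorbed otherwise using the exponential. On the main range one is left with $\int_0^{1} t^{-p}\exp(-\delta|u-x|^2/(4t))\,dt$.

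Finally I would evaluate $\int_0^{\infty} t^{-p}\exp(-a/t)\,dt$ with $a=\delta|u-x|^2/4$ by the substitution $s=a/t$, which gives $a^{1-p}\int_0^\infty s^{p-2}e^{-s}\,ds = \Gamma(p-1)\,a^{1-p}$ when $p>1$; this produces the factor $|u-x|^{2-2p}$, and combined with $|x-u|^{-r}$ yields $|u-x|^{-2p-r+2}$ as claimed. When $p\le 1$ (so that $r/2>1-p\ge 0$, i.e. $r>2-2p$), the integral $\int_0^1 t^{-p}e^{-a/t}\,dt$ is bounded by $\int_0^1 t^{-p}\,dt \lesssim 1 \lesssim |u-x|^{-(2-2p)}$ only if $|u-x|\lesssim 1$, which holds here since $\eta(x,u)>0$ forces $|u-x|$ bounded; so in that regime one uses the crude bound together with $|x-u|^{-r}\le |x-u|^{-r}$ and the inequality $1\lesssim |u-x|^{2p+r-2}\cdot|u-x|^{-(2p+r-2)}$... more simply, since $|u-x|\lesssim 1$ and $2p+r-2>0$, we have $|u-x|^{-(2p+r-2)}\gtrsim 1$, so $|u-x|^{-r}\cdot 1 \le |u-x|^{-r}\cdot|u-x|^{-(2p-2)} = |u-x|^{-2p-r+2}$ using $2p-2\le 0$. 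I expect the main obstacle to be the bookkeeping in the case split on $t$ versus $|u-x|$ and $|x|$, together with correctly handling the borderline ranges of $(p,r)$; the core estimate $\int_0^\infty t^{-p}e^{-a/t}\,dt \simeq a^{1-p}$ and the substitution $|x|\lesssim 1/|u-x|$ are the two decisive ingredients.
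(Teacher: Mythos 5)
The paper disposes of this lemma in one line, by citing \cite[Lemma 8.1]{CCS3} and observing that $\eta(x,u)>0$ places $(x,u)$ in the local region $L_A$; you are instead attempting a self-contained proof, which is a legitimately different route. Your argument is correct when $p>1$: there $\int_0^\infty t^{-p}e^{-a/t}\,dt=\Gamma(p-1)a^{1-p}$ handles the range where $|u-D_tx|\ge|u-x|/2$, and $\int_{c|u-x|^2}^1 t^{-p}\,dt\lesssim|u-x|^{2-2p}$ handles the rest, so that together with $|x|^r\lesssim|u-x|^{-r}$ you get the claim.

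The case $p\le 1$, however, contains a genuine error, and it is not a borderline curiosity: the lemma is applied in Proposition \ref{lemma-Calderon-prelim} with $(p,r)=(n/2+1/2,\,1)$, which for $n=1$ is exactly $p=1$, $r=1$. Your final step asserts $|u-x|^{-r}\cdot 1\le|u-x|^{-r}\cdot|u-x|^{-(2p-2)}$, i.e.\ $1\le|u-x|^{2-2p}$; since $2-2p\ge 0$ and $|u-x|$ can be arbitrarily small (take $|x|$ large and $|u-x|\simeq 1/|x|$), this inequality is reversed. What your crude bound actually yields is $|u-x|^{-r}$, which is \emph{larger} than the target $|u-x|^{2-2p-r}=|u-x|^{-r}\,|u-x|^{2-2p}$ by the unbounded factor $|u-x|^{-(2-2p)}$ (and for $p=1$ there is an additional logarithm you cannot absorb). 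The repair requires genuinely using the exponential on the range $t\gtrsim|u-x|/|x|$: there the \emph{lower} bound in Lemma \ref{differ} gives $|x-D_tx|\ge c\,t|x|\ge 2|u-x|$ (for a suitable constant in the cutoff), hence $|u-D_tx|\gtrsim t|x|$ and the integrand carries a factor $\exp(-c\delta\, t|x|^2)$; then $\int_0^\infty t^{-p}e^{-c\delta t|x|^2}|x|^r\,dt\lesssim|x|^{2p+r-2}\lesssim|u-x|^{2-2p-r}$, using $2p+r-2>0$. On the complementary range $t\lesssim|u-x|/|x|$ you must also keep the upper limit of integration instead of extending it to $1$: $\int_0^{c|u-x|/|x|}t^{-p}\,dt\cdot|x|^r\simeq|u-x|^{1-p}|x|^{p+r-1}\lesssim|u-x|^{2-2p-r}$, since $p+r-1>0$. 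With these two corrections the direct proof goes through for all $p+r/2>1$.
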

\begin{proof}
To see that this follows from \cite[Lemma 8.1]{CCS3}, it is enough to observe that Lemma \ref{lemma1}($i$) leads to $|u-x| \lesssim 1/(1+|x|) $, i.e., $(x,u) \in L_A$ in the terminology of \cite{CCS3}.
   \end{proof}

We shall now find the
 off-diagonal kernel  $\mathcal Q(x,u)$ of the operator $q_T$
 from \eqref{p_Tdef}, defined for integration against Lebesgue measure,  that is,
\begin{equation*}
 q_T f(x)= \int  \mathcal Q(x,u)\,f(u)\,du,   \qquad x \notin \text{supp}\,f.
\end{equation*}

 It will be convenient to introduce another kernel
 \begin{align}\label{defKRtcalligrafico}
\mathcal K_t (x,u)
&:= e^{-R(x)}\,K_t (x,u)\notag\\
&=
\Big(
\frac{\det \, Q_\infty}{\det \, Q_t}
\Big)^{{1}/{2} }\,
\exp \Big[
{-\frac12
\left\langle (
Q_t^{-1}-Q_\infty^{-1}) (u-D_t \,x) \,,\, u-D_t\, x\right\rangle}\Big].\quad\quad
\end{align}

We recall from \eqref{eq:moloc} and Proposition \ref{conv0'} that
 the off-diagonal kernel of
$m_{0}(T)^{{\rm{loc}}}$ is
\begin{equation}\label{M0loc}
  \mathcal M_{0}^{{\rm{loc}}}(x,u)
 = \mathcal M_{0}(x,u)\,\eta(x,u)
   = -\int_0^{1}
\varphi (t)\,
\dot K_t (x,u)
\,dt \;\eta(x,u).
\end{equation}

From this and \eqref{p_Tdef} it follows that
\begin{equation}\label{def:Pcalli}
 \mathcal Q(x,u)=
 e^{-R(x)}\,
\mathcal M_{0}^{{\rm{loc}}} (x,u)
=   -\int_0^1
\varphi (t)\,
\dot {\mathcal K}_t (x,u)
\,dt \:
 \eta (x,u).
\end{equation}

We will need expressions for some derivatives of $\mathcal  K_t$; for similar results about the derivatives of $K_t$ we refer to
\cite[Lemma~4.1]{CCS3}.

 Using \eqref{Dt_again}, one sees that
\begin{align}
\partial_{x_\ell}\, \mathcal K_t (x,u) &=
 \mathcal K_t (x,u)\,  \mathcal P_\ell(t,x,u), \label{derPt}
 \end{align}
 where
\begin{align}
\mathcal  P_\ell(t,x,u)&=  \label{expr1Pell}
{
\left\langle   Q_t^{-1}\,
e^{tB}\, e_\ell
  \,,\, u-D_t\, x\right\rangle}.
  \end{align}
Similarly, or as an immediate consequence of \cite[formula (4.2)]{CCS3},
\begin{align}
\partial_{u_\ell}\, \mathcal  K_t (x,u)
&=
 - \mathcal   K_t (x,u)\,{\left\langle Q_t^{-1}\, e^{tB} \, ( D_{-t}\, u - x)\,, \,e_\ell\right\rangle}.\label{derKtul}
\end{align}

The following three technical lemmata give expressions and estimates for derivatives of
$\dot {\mathcal  K_t} = \partial \mathcal  K_t/\partial t$.
Before stating them, we notice that
\begin{equation}\label{hej}    
\dot {\mathcal  K_t}(x,u)=
e^{-R(x)}\,
\dot {  K_t}(x,u)
= e^{-R(x)}\, K_t(x,u) \, N_t(x,u)=
{\mathcal  K_t}(x,u)\,N_t(x,u),
\end{equation}
with $N_t(x,u)$  from Lemma \ref{derivate-nucleo}.

\begin{lemma}\label{lemma-derivatives}
For $x,\,u\in\R^n$
and $t>0$,  one has
\begin{align*}
&{{(i)}}\qquad
\partial_{x_\ell} \,
\dot {\mathcal  K_t} (x,u)
=\mathcal K_t (x,u)\, \mathcal S_\ell(t,x,u) ; \\
&{{(ii)}}\qquad
\partial_{u_\ell}\, \dot {\mathcal  K_t}(x,u)
= \mathcal K_t (x,u)\, \mathcal R_\ell(t,x,u),
\end{align*}
where the factors $\mathcal S_\ell(t,x,u)$ and $\mathcal R_\ell(t,x,u)$ are given by
\begin{align}
&\mathcal S_\ell(t,x,u)=
 N_t (x,u)\,
\mathcal P_\ell(t,x,u)-
{
\left\langle   Q_t^{-1}\, e^{tB}\, Q\, e^{tB^*}\, Q_t^{-1}\,
e^{tB}\, e_\ell
  \,,\, u-D_t\, x\right\rangle}
\notag\\
&\qquad +
  {
\left\langle   Q_t^{-1}\,B\,e^{tB}\, e_\ell
  \,,\, u-D_t\, x\right\rangle}
  +\left\langle
  Q_t^{-1}\,
e^{tB}\, e_\ell
  \,,\, Q_\infty \,B^*\, e^{-tB^*}\, Q_\infty^{-1}\,  x\right\rangle,
\label{defS1}
\end{align}
and
\begin{align}
 \mathcal R_\ell(t,x,u)
&=
-N_t(x,u)
 \,{\left\langle Q_t^{-1}\, e^{tB} \, ( D_{-t}\, u - x)\,, \,e_\ell\right\rangle}
\notag\\
 +&
 {\left\langle
Q_t^{-1}\, e^{tB}\, Q\, e^{tB^*}\, Q_t^{-1}\, e^{tB} \, ( D_{-t}\, u - x) \,, \,e_\ell\right\rangle}
\notag  \\
-&
{\left\langle
  Q_t^{-1} \, B\,e^{tB} \, ( D_{-t}\, u - x)\, , \,e_\ell\right\rangle}-
{\left\langle
  Q_t^{-1}\, e^{tB} \,   Q_\infty\, B^*\, \,Q_\infty^{-1}  \,D_{-t}\,u\,, \,e_\ell
  \right\rangle} .
\label{def_Rttt}
\end{align}
\end{lemma}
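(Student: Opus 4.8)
The plan is to differentiate the identity \eqref{hej} with respect to $x_\ell$ and then to $u_\ell$, using the product rule together with the already-established formulas \eqref{derPt}, \eqref{expr1Pell} and \eqref{derKtul} for the first-order derivatives of $\mathcal K_t$, plus an explicit formula for the first-order derivatives of $N_t$. Concretely, from $\dot{\mathcal K_t} = \mathcal K_t\, N_t$ we get
\[
\partial_{x_\ell}\dot{\mathcal K_t}
= (\partial_{x_\ell}\mathcal K_t)\,N_t + \mathcal K_t\,\partial_{x_\ell}N_t
= \mathcal K_t\,\bigl(\mathcal P_\ell(t,x,u)\,N_t(x,u) + \partial_{x_\ell}N_t(x,u)\bigr),
\]
and similarly
\[
\partial_{u_\ell}\dot{\mathcal K_t}
= \mathcal K_t\,\Bigl(-{\left\langle Q_t^{-1}\,e^{tB}\,(D_{-t}u-x),\,e_\ell\right\rangle}\,N_t(x,u) + \partial_{u_\ell}N_t(x,u)\Bigr).
\]
So the content of the lemma is just the identification of $\mathcal S_\ell$ and $\mathcal R_\ell$ with the bracketed expressions; the factor $\mathcal K_t(x,u)$ pulls out exactly as claimed.

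The main computation, then, is to differentiate $N_t$, and here it is cleanest to use the two representations from Lemma~\ref{derivate-nucleo}: for $\partial_{x_\ell}N_t$ I would start from \eqref{R}, and for $\partial_{u_\ell}N_t$ from \eqref{P}. In \eqref{R} the variable $x$ enters only through the combination $u - D_t x$ (and through the last inner product, linearly in $D_t x$), so the chain rule gives $\partial_{x_\ell}(u - D_t x) = -D_t e_\ell$. Differentiating the quadratic term $\tfrac12|Q^{1/2}e^{tB^*}Q_t^{-1}(u-D_t x)|^2$ produces $-\langle Q_t^{-1}e^{tB}Q e^{tB^*}Q_t^{-1}(u-D_t x),\,D_t e_\ell\rangle$; rewriting $D_t e_\ell$ via \eqref{Dt_again} as $(Q_t^{-1}-Q_\infty^{-1})^{-1}Q_t^{-1}e^{tB}e_\ell$ and combining it with the adjoint of $(Q_t^{-1}-Q_\infty^{-1})$ appearing in the exponent of $\mathcal K_t$ is what will convert this into the $e_\ell$-form seen in \eqref{defS1}. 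The last inner product in \eqref{R} is affine in $x$, and its $x_\ell$-derivative contributes the term $\langle Q_t^{-1}e^{tB}e_\ell,\,Q_\infty B^* e^{-tB^*}Q_\infty^{-1}x\rangle$ after using \eqref{derd-t} for $\dot D_t$ and symmetrising; I should keep careful track of the two places $x$ occurs so that the contribution $N_t\,\mathcal P_\ell$ and the three correction terms in \eqref{defS1} emerge precisely. The computation of $\partial_{u_\ell}N_t$ from \eqref{P} is analogous but slightly shorter: there $u$ appears in $D_{-t}u - x$, in $D_{-t}u$, and $\partial_{u_\ell}(D_{-t}u - x) = D_{-t}e_\ell$ with $D_{-t} = (D_t)^{-1}$, and one uses that $Q_t^{-1}e^{tB}D_{-t} = Q_t^{-1}e^{tB}Q_\infty e^{tB^*}Q_\infty^{-1}$ and related identities to land on the $e_\ell$-form in \eqref{def_Rttt}; the terms $II_t,III_t,IV_t$ of \eqref{P} each contribute one summand there, while $I_t$ is constant in $u$ and $V_t$'s derivative cancels against a piece of the others.

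I expect the only real obstacle to be bookkeeping: making sure all the matrix transposes are placed correctly (several of the matrices here are non-symmetric, and one must distinguish $e^{tB}$ from $e^{tB^*}$ and $D_t$ from $D_{-t}$), and checking that the various ways of writing $D_t e_\ell$ — directly, via \eqref{Dt_again}, or via \eqref{defDttt'} — are used consistently so that the corrections coming from the exponent of $\mathcal K_t$ and from differentiating $N_t$ assemble into exactly the four listed terms with no leftover pieces. There is no analytic subtlety: everything is a finite-dimensional smooth computation valid for all $t>0$ and all $x,u$, and the formulas \eqref{derd-t}, \eqref{8}, \eqref{Dt_again} and \eqref{defDttt'} from Lemma~\ref{derivatives} and Section~\ref{preliminaries} supply every derivative needed. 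Thus the proof is a direct differentiation, and I would present it by computing $\partial_{x_\ell}N_t$ and $\partial_{u_\ell}N_t$ separately, substituting into the product-rule identities above, and reading off \eqref{defS1} and \eqref{def_Rttt}.
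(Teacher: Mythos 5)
Your proposal is correct, but it reverses the order of differentiation compared with the paper. You differentiate the identity $\dot{\mathcal K_t}=\mathcal K_t\,N_t$ in the spatial variables, so your main task is to compute $\partial_{x_\ell}N_t$ from \eqref{R} and $\partial_{u_\ell}N_t$ from \eqref{P}. The paper instead writes $\partial_{x_\ell}\dot{\mathcal K_t}=\partial_t\bigl(\mathcal K_t\,\mathcal P_\ell\bigr)$ (and analogously for $u_\ell$ via \eqref{derKtul}), so its only new computation is the $t$-derivative of the scalar $\mathcal P_\ell(t,x,u)=\langle Q_t^{-1}e^{tB}e_\ell,\,u-D_t x\rangle$, which falls out immediately from \eqref{8} and \eqref{derd-t} already in the $e_\ell$-form of \eqref{defS1}. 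Both routes are legitimate (the equality $\partial_{x_\ell}N_t=\partial_t\mathcal P_\ell$ is forced by commutation of mixed partials), but yours costs more algebra: differentiating the quadratic term of \eqref{R} leaves you with $-\langle Q_t^{-1}e^{tB}Qe^{tB^*}Q_t^{-1}(u-D_tx),\,D_te_\ell\rangle$, and to reach \eqref{defS1} you must use $(Q_t^{-1}-Q_\infty^{-1})\,D_t=Q_t^{-1}e^{tB}$ (a rearrangement of \eqref{Dt_again}) on the derivative of the \emph{third} term of \eqref{R} and observe a cancellation between the two contributions. On that point your sketch is slightly off: the factor $(Q_t^{-1}-Q_\infty^{-1})$ you need comes from the last inner product in \eqref{R} itself, not from ``the exponent of $\mathcal K_t$'' --- the exponent of $\mathcal K_t$ never enters $\partial_{x_\ell}N_t$. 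With that correction the bookkeeping closes and you land on exactly \eqref{defS1} and \eqref{def_Rttt}; the paper's order of differentiation is simply the shorter path to the same identities.
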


\begin{proof}
   To prove {\it{(i)}}, we  start by observing that
  \begin{align*}
 \partial_{x_\ell}\, \dot {\mathcal  K_t} (x,u)
= & \,{\partial_t} \left( \mathcal K_t (x,u)\, \mathcal P_\ell(t,x,u)\right)\\
 =&\,
\mathcal   K_t (x,u)\, N_t (x,u)\,
\mathcal P_\ell(t,x,u)+
\mathcal   K_t (x,u) \, {\partial_t}\,
 \big(
{
\left\langle   Q_t^{-1}\,
e^{tB}\, e_\ell
  \,,\, u-D_t\, x\right\rangle} \big),
 \end{align*}
 where we used  \eqref{expr1Pell}. Applying  \eqref{8} and  \eqref{derd-t} to the last derivative here,
 one arrives at  \eqref{defS1}, and
  {\it{(i)}} is verified.

  To prove {\it{(ii)}}, we  proceed similarly, using
   \eqref{derKtul} to write
\begin{multline*}
 \partial_{u_\ell}\dot {\mathcal  K_t} (x,u)=
- {\mathcal  K_t} \, N_t(x,u)
 \,{\left\langle Q_t^{-1}\, e^{tB} \, ( D_{-t}\, u - x)\,, \,e_\ell\right\rangle}\\
 -
\mathcal K_t (x,u)\:{\partial_t}\left({\left\langle Q_t^{-1}\, e^{tB} \, ( D_{-t}\, u - x)\,, \,e_\ell\right\rangle}\right).
  \end{multline*}
 As in the case of  {\it{(i)}},  this leads to \eqref{def_Rttt} and  {\it{(ii)}}.
\end{proof}

\medskip

To bound  $\mathcal S_\ell$ and $\mathcal R_\ell$, one concludes from \cite[formula (4.5)]{CCS3} that
(notice the distinction between our $\mathcal P_\ell$ and the $P_j$ used in \cite{CCS3} )
\begin{equation}
 |\mathcal P_\ell (t,x,u)|\lesssim
 {
| u-D_t \,x
|}/{t}, \qquad 0 <t \le 1. \label{est-for-Pj}
\end{equation}

\begin{lemma}\label{stimaStttt}
One has for  $0<t \le1$ and all $x,\,u\in\R^n$
\begin{equation*}
|\mathcal S_\ell(t,x,u)|
 \lesssim
|x|\,\frac{\left|u-D_t \,x\right|^2}{t^2}+
\frac{\left|u-D_t\, x\right|^3}{t^3}
+
\frac{|u-D_t\, x|}t
+\frac{| u-D_t \,x|}{t^2}
+\frac{| x|}{t}.
\end{equation*}
\end{lemma}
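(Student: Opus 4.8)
The plan is to start from the explicit formula \eqref{defS1} for $\mathcal S_\ell(t,x,u)$ and estimate each of its four summands separately, using only the matrix bounds available for $0<t\le1$, namely \eqref{stimaQt-1}, \eqref{stime Q}, \eqref{est:2-esBs-v}, the estimate \eqref{est-for-Pj} for $\mathcal P_\ell$, and the estimate \eqref{R1} for $N_t(x,u)$. Throughout, all the matrix factors $e^{tB}$, $e^{tB^*}$, $Q$, $Q_\infty$, $Q_\infty^{-1}$, $B$, $B^*$ have operator norm $\lesssim 1$ for $0<t\le1$, while $\|Q_t^{-1}\|\lesssim 1/t$ by \eqref{stimaQt-1}; these are the only facts I will use to dispose of the matrix factors.

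First I would handle the term $N_t(x,u)\,\mathcal P_\ell(t,x,u)$. By \eqref{R1} and \eqref{est-for-Pj},
\begin{equation*}
|N_t(x,u)\,\mathcal P_\ell(t,x,u)| \lesssim \left(\frac1t+\frac{|u-D_t x|^2}{t^2}+|x|\,\frac{|u-D_t x|}{t}\right)\cdot\frac{|u-D_t x|}{t},
\end{equation*}
which expands to $\dfrac{|u-D_t x|}{t^2}+\dfrac{|u-D_t x|^3}{t^3}+|x|\,\dfrac{|u-D_t x|^2}{t^2}$; these are exactly three of the five terms claimed in the lemma. Next, the second summand in \eqref{defS1}, namely $\langle Q_t^{-1} e^{tB} Q e^{tB^*} Q_t^{-1} e^{tB} e_\ell,\, u-D_t x\rangle$, has two factors of $Q_t^{-1}$, hence is $\lesssim t^{-2}|u-D_t x|$ by Cauchy--Schwarz, contributing $|u-D_t x|/t^2$, which is already on the list. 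The third summand $\langle Q_t^{-1} B e^{tB} e_\ell,\, u-D_t x\rangle$ has a single $Q_t^{-1}$, so it is $\lesssim t^{-1}|u-D_t x|$, the term $|u-D_t x|/t$. Finally, the fourth summand $\langle Q_t^{-1} e^{tB} e_\ell,\, Q_\infty B^* e^{-tB^*} Q_\infty^{-1} x\rangle$ again has one $Q_t^{-1}$, so by Cauchy--Schwarz and the bound $|e^{-tB^*}Q_\infty^{-1}x|\lesssim|x|$ for $0<t\le1$ coming from \eqref{est:2-esBs-v}, it is $\lesssim |x|/t$. Collecting all of these bounds gives precisely
\begin{equation*}
|\mathcal S_\ell(t,x,u)| \lesssim |x|\,\frac{|u-D_t x|^2}{t^2}+\frac{|u-D_t x|^3}{t^3}+\frac{|u-D_t x|}{t}+\frac{|u-D_t x|}{t^2}+\frac{|x|}{t},
\end{equation*}
which is the claim.

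I do not expect a genuine obstacle here: the proof is a bookkeeping exercise in which the only non-mechanical input is recognizing that each factor $Q_t^{-1}$ costs a power $t^{-1}$ while every other matrix is $O(1)$, and then reading off which of the five listed monomials each term of \eqref{defS1} produces. The mildest point of care is the term $\langle Q_t^{-1}e^{tB}e_\ell,\,Q_\infty B^* e^{-tB^*}Q_\infty^{-1}x\rangle$, where one must use the correct side of \eqref{est:2-esBs-v} to bound $|e^{-tB^*}Q_\infty^{-1}x|$ by a constant times $|x|$ on the bounded range $0<t\le1$ (the exponential growth factor $e^{Ct}$ is harmless there), rather than trying to extract decay. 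One could also note that the term $|u-D_t x|/t$ is in fact dominated by $|u-D_t x|/t^2$ for $0<t\le1$, but since it appears separately in the statement there is no need to simplify.
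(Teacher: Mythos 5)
Your proposal is correct and follows essentially the same route as the paper: the paper likewise bounds the product $N_t\,\mathcal P_\ell$ via \eqref{R1} and \eqref{est-for-Pj} to obtain the three terms $|u-D_t x|/t^2$, $|u-D_t x|^3/t^3$ and $|x|\,|u-D_t x|^2/t^2$, and then disposes of the remaining three summands of \eqref{defS1} exactly as you do (each $Q_t^{-1}$ costing $t^{-1}$ by \eqref{stimaQt-1} and all other matrices being $O(1)$ for $0<t\le1$). Your write-up is in fact slightly more explicit than the paper, which leaves the last three summands as "estimating also the other terms."
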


\begin{proof}
We first bound the product $  N_t (x,u)\,
\mathcal P_\ell(t,x,u)$ appearing in  \eqref{defS1}.
  Because of \eqref{R1}
 and \eqref{est-for-Pj}, we have for  $0< t\leq1$
\begin{align}
\big| & N_t (x,u)
\mathcal P_\ell(t,x,u)\big|
\: \lesssim\:\left( \frac{1}{t}
+\frac{\left|u-D_t\, x\right|^2}{t^2}
+ |x|\,\frac{|u-D_t \,x|}t\right)\,
 \frac{
| u-D_t \,x
|}{t}\notag\\
& \lesssim
\frac{| u-D_t\, x|}{t^2}
+
\frac{\left|u-D_t\, x\right|^3}{t^3}+
|x|\,\frac{\left|u-D_t\, x\right|^2}{t^2}
\label{esr-NtPl}.
\end{align}
Estimating also the other terms in \eqref{defS1}, one arrives at the lemma.
                     \end{proof}

\begin{lemma}\label{stimaRtttt}
For $t\in (0,1]$ and all $x,\,u\in\R^n$
\begin{equation*}
|\mathcal R_\ell(t,x,u)|
 \lesssim
   |x|\,\frac{|u-D_t\, x|^2}{t^2}+\frac{| u-D_t \,x|^3}{t^3}+
\frac{| u-D_t\, x|}{t^2}
+
+\frac{|x|}t.
\end{equation*}
\end{lemma}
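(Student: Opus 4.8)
The plan is to derive the bound for $\mathcal R_\ell(t,x,u)$ directly from its explicit expression \eqref{def_Rttt}, estimating the four summands separately on the range $0<t\le1$, exactly as was done for $\mathcal S_\ell$ in Lemma \ref{stimaStttt}. The essential tools are the matrix estimates \eqref{stimaQt-1} and \eqref{stime Q}, the exponential bounds \eqref{est:2-esBs-v} on $e^{\pm tB}$, the estimate \eqref{est:2-eBs-v} on $D_{-t}$, and the size bound \eqref{R1} on $N_t(x,u)$.

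First I would treat the leading term $-N_t(x,u)\,\langle Q_t^{-1}e^{tB}(D_{-t}u-x),e_\ell\rangle$. Using $\|Q_t^{-1}\|\lesssim 1/t$ from \eqref{stimaQt-1}, $\|e^{tB}\|\lesssim1$ from \eqref{est:2-esBs-v}, and observing that $|D_{-t}u-x|$ plays here the same role that $|u-D_tx|$ does for $\mathcal P_\ell$, one gets $|\langle Q_t^{-1}e^{tB}(D_{-t}u-x),e_\ell\rangle|\lesssim|D_{-t}u-x|/t$. Combined with \eqref{R1}, this mimics \eqref{esr-NtPl} and produces the three terms $|u-D_tx|/t^2$, $|u-D_tx|^3/t^3$ and $|x|\,|u-D_tx|^2/t^2$ in the claimed bound — except that here the natural displacement variable coming out of \eqref{derKtul} is $D_{-t}u-x$ rather than $u-D_tx$. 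I would use Lemma \ref{differ} (valid since $|t|\le1$) to pass between the two: writing $D_{-t}u-x = D_{-t}(u-D_tx)$, one has $|D_{-t}u-x|\simeq|u-D_tx|$ for $|t|\le1$ by \eqref{est:2-eBs-v} restricted to $t\le1$, so the two displacement norms are comparable and interchangeable throughout.

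For the remaining three terms of \eqref{def_Rttt} I would estimate crudely. The term $\langle Q_t^{-1}e^{tB}Qe^{tB^*}Q_t^{-1}e^{tB}(D_{-t}u-x),e_\ell\rangle$ contributes $\lesssim t^{-2}|u-D_tx|$ via two factors of $\|Q_t^{-1}\|$. The term $\langle Q_t^{-1}Be^{tB}(D_{-t}u-x),e_\ell\rangle$ contributes $\lesssim t^{-1}|u-D_tx|$, which is dominated by the previous one on $0<t\le1$. The last term $\langle Q_t^{-1}e^{tB}Q_\infty B^*Q_\infty^{-1}D_{-t}u,e_\ell\rangle$ contributes $\lesssim t^{-1}|D_{-t}u|$; since $|D_{-t}u|\le|D_{-t}u-x|+|x|$ and $|D_{-t}u-x|\simeq|u-D_tx|$, this is $\lesssim t^{-1}|u-D_tx|+t^{-1}|x|$. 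Collecting everything and absorbing the subdominant $t^{-1}|u-D_tx|$ into $t^{-2}|u-D_tx|$ yields precisely
\begin{equation*}
|\mathcal R_\ell(t,x,u)|\lesssim |x|\,\frac{|u-D_tx|^2}{t^2}+\frac{|u-D_tx|^3}{t^3}+\frac{|u-D_tx|}{t^2}+\frac{|x|}{t},
\end{equation*}
which matches the statement (the double "$+$" in the displayed claim is a typo). No step is genuinely hard; the only point requiring care is the systematic translation between $|D_{-t}u-x|$ and $|u-D_tx|$ on the small-$t$ range, and making sure the $|x|/t$ term genuinely arises — it does, solely from the $V$-type last summand through the split $|D_{-t}u|\le|D_{-t}u-x|+|x|$.
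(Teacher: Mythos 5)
Your proposal is correct and follows essentially the same route as the paper: a term-by-term estimate of the four summands in \eqref{def_Rttt} using $\|Q_t^{-1}\|\lesssim t^{-1}$, the bounds on $e^{\pm tB}$ and $D_{-t}$, and \eqref{R1} for the $N_t$ factor, with $|u|/t$ (equivalently $|D_{-t}u|/t$) split as $\lesssim |u-D_t\,x|/t^2+|x|/t$ at the end. The only cosmetic difference is that you justify $|\langle Q_t^{-1}e^{tB}(D_{-t}u-x),e_\ell\rangle|\lesssim |u-D_t\,x|/t$ via the comparability $|D_{-t}u-x|\simeq|u-D_t\,x|$ from \eqref{est:2-esBs-v} (your citation of Lemma \ref{differ} is unnecessary for this), whereas the paper reads the same bound off directly; both are valid.
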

\begin{proof}
 For $t\in (0,1]$ we have  by \eqref{def_Rttt} and  \eqref{R1}
\begin{align*}
\big| \mathcal R_\ell(t,x,u)\big|
&\lesssim
\big|N_t(x,u)
\big|
 \,\frac{| u-D_t\, x|}{t}
+\frac{| u-D_t\, x|}{t^2}+ \,\frac{| u-D_t\, x|}{t}+\frac{|u|}t\\
&\lesssim
\left( \frac{1}{t}
+\frac{\left|u-D_t\, x\right|^2}{t^2}
+ |x|\,\frac{|u-D_t \,x|}t   \right)
\frac{| u-D_t\, x|}{t}
+\frac{| u-D_t \,x|}{t^2}+\frac{|x|}t.
\end{align*}
Here we estimated $|u|/t$ by $| u-D_t\, x|/t^2 + |x|/t$. The lemma follows.
\end{proof}

\begin{proposition}
\label{lemma-Calderon-prelim}
For  all $(x,u)$ such that $\eta(x,u)\neq 0$  and  $x\neq u$,
  one has
\begin{align*}
\int_0^{1}
\big|
\dot {\mathcal  K_t}(x,u)\big|
\,dt
& \lesssim  |u-x|^{-n}.
\end{align*}
                 \end{proposition}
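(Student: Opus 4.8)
The plan is to use the pointwise identity $\dot{\mathcal K}_t(x,u) = \mathcal K_t(x,u)\, N_t(x,u)$ from \eqref{hej}, together with the upper bound for $\mathcal K_t$ coming from \eqref{litet} — namely $\mathcal K_t(x,u) = e^{-R(x)} K_t(x,u) \lesssim t^{-n/2}\exp\!\big(-c\,|u-D_t x|^2/t\big)$ for $0<t\le 1$ — and the bound \eqref{R1} for $N_t$. Multiplying the two gives
\begin{equation*}
\big|\dot{\mathcal K}_t(x,u)\big| \lesssim t^{-n/2}\exp\!\Big(-c\,\frac{|u-D_t x|^2}{t}\Big)\Big(\frac1t + \frac{|u-D_t x|^2}{t^2} + |x|\,\frac{|u-D_t x|}{t}\Big).
\end{equation*}
The Gaussian factor absorbs the polynomial factors $|u-D_t x|^k/t^{k/2}$ at the cost of shrinking the constant $c$, so after such an absorption one is left with terms of the form $t^{-n/2-1}\exp(-c|u-D_t x|^2/t)$ and $|x|\,t^{-n/2-1/2}\exp(-c|u-D_t x|^2/t)$ (the latter because $|u-D_t x|/t \cdot \exp(-c|u-D_t x|^2/t) \lesssim t^{-1/2}\exp(-c'|u-D_t x|^2/t)$).

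Next I would integrate in $t$ over $(0,1)$ and invoke Lemma~\ref{lemma-integral_dt}, which is exactly tailored to this: since $\eta(x,u)>0$ and $x\neq u$, its hypothesis is met. Applying \eqref{propclaim} with $(p,r)=(n/2+1,\,0)$ to the first type of term gives a contribution $\lesssim |u-x|^{-2(n/2+1)+2} = |u-x|^{-n}$; applying it with $(p,r)=(n/2+1/2,\,1)$ to the second type gives $\lesssim |u-x|^{-(n+1)-1+2} = |u-x|^{-n}$. In both cases one checks the constraint $p+r/2>1$, which holds for $n\ge 1$. Summing the finitely many terms yields the claimed bound $\int_0^1 |\dot{\mathcal K}_t(x,u)|\,dt \lesssim |u-x|^{-n}$.

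The only mild subtlety — and the step I expect to require the most care — is the absorption of the polynomial factors of $|u-D_t x|/t^{1/2}$ into the Gaussian before integrating, making sure that the factor $|x|$ is genuinely kept outside (it cannot be absorbed) so that Lemma~\ref{lemma-integral_dt} is applied with the right value of $r$. One must also be slightly careful that $|u-D_t x|$, not $|u-x|$, appears in the exponent throughout, which is precisely why one uses Lemma~\ref{lemma-integral_dt} rather than a naive change of variables; that lemma already encodes the passage from $|u-D_t x|$ to $|u-x|$ via Lemma~\ref{differ} and the geometry of the region $\eta>0$. Everything else is a routine bookkeeping of the five explicit terms.
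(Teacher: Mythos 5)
Your proposal is correct and follows exactly the paper's own argument: factor $\dot{\mathcal K}_t=\mathcal K_t N_t$, bound the two factors via \eqref{litet} and \eqref{R1}, absorb the powers of $|u-D_t x|/\sqrt t$ into the Gaussian to reduce to $t^{-1}+|x|t^{-1/2}$, and finish with Lemma~\ref{lemma-integral_dt}. The exponent bookkeeping with $(p,r)=(n/2+1,0)$ and $(n/2+1/2,1)$ is exactly right.
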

\begin{proof}
From \eqref{hej}    
\eqref{litet} and \eqref{R1} we obtain
\begin{align*}
\int_0^1
& \big|
\dot {\mathcal  K_t} (x,u)\big|
\,dt \\
&\lesssim
 \int_0^1 t^{-\frac n2} \exp\left(- c\,\frac{|D_t\,x -u|^2}t\right)\,
\left(\frac{1}{t}
+\frac{\left|u-D_t\, x\right|^2}{t^2}
+ |x|\,\frac{|u-D_t\, x|}t\right) dt\\
&\lesssim
 \int_0^1 t^{-\frac n2} \exp\left(- c\, \frac{|D_t\,x -u|^2}t\right)\,
\left(\frac{1}{t}
+\frac{|x|}{\sqrt t}
\right)\, dt.
\end{align*}
Because of
Lemma \ref{lemma-integral_dt},
 the last  integral  is controlled
 by $  |u-x|^{-n}$, and the proposition
is proved.
\end{proof}

We are now ready to prove standard   Calder\'on-Zygmund bounds
for
 the off-diagonal kernel of $q_T$  with respect to Lebesgue measure.

\begin{proposition}\label{lemma-Calderon}
For  all $(x,u)$ such that $\eta(x,u)\neq 0$  and  $x\neq u$,
the following estimates hold:
\begin{align}
	\big| \mathcal Q(x,u)\big| 
	&\lesssim   |u-x|^{-n}; \label{stima-Mphiloc}\\
 \big|\nabla_x\, \mathcal Q(x,u)\big|&\lesssim   {|u-x|^{-n-1}};\label{stima-Mphiloc-Dx}\\
\big|\nabla_u \,
\mathcal
Q(x,u)
\big|
& \lesssim { }{|u-x|^{-n-1}}.\label{stima-Mphiloc-Du}
\end{align}
\end{proposition}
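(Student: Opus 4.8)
The plan is to prove all three estimates by the same mechanism used for Proposition~\ref{lemma-Calderon-prelim}, namely by bounding the relevant integrand by $\mathcal K_t(x,u)$ times a sum of terms of the form $t^{-p}|u-D_t\,x|^k$ with $|x|$-powers, then invoking \eqref{litet} to replace $\mathcal K_t$ by $t^{-n/2}\exp(-c|u-D_t\,x|^2/t)$, absorbing all positive powers of $|u-D_t\,x|/\sqrt t$ into the exponential, and finally applying Lemma~\ref{lemma-integral_dt}. Throughout, the hypothesis $\eta(x,u)\neq 0$ is what makes Lemma~\ref{lemma-integral_dt} applicable (via Lemma~\ref{lemma1}($i$), so that $|u-x|\lesssim 1/(1+|x|)$), and it also lets us freely pass derivatives through the cutoff $\eta(x,u)$ using \eqref{nablaeta}.

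First, \eqref{stima-Mphiloc} is essentially a restatement of Proposition~\ref{lemma-Calderon-prelim}: since $\|\varphi\|_\infty\le 1$ and $0\le\eta\le 1$, formula \eqref{def:Pcalli} gives $|\mathcal Q(x,u)|\le\int_0^1|\dot{\mathcal K}_t(x,u)|\,dt\lesssim|u-x|^{-n}$ directly. Next, for \eqref{stima-Mphiloc-Dx} I would differentiate \eqref{def:Pcalli} in $x$. There are two contributions: differentiating $\dot{\mathcal K}_t$, which by Lemma~\ref{lemma-derivatives}($i$) produces $\mathcal K_t(x,u)\,\mathcal S_\ell(t,x,u)$, and differentiating $\eta(x,u)$, which by \eqref{nablaeta} contributes a factor $\lesssim|u-x|^{-1}$ multiplying the already-estimated $|\mathcal Q|$-type quantity, hence $\lesssim|u-x|^{-n-1}$. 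For the main term, Lemma~\ref{stimaStttt} bounds $|\mathcal S_\ell|$ by a sum of terms $|x|\,|u-D_t x|^2/t^2$, $|u-D_t x|^3/t^3$, $|u-D_t x|/t$, $|u-D_t x|/t^2$, $|x|/t$; multiplying by $t^{-n/2}\exp(-c|u-D_t x|^2/t)$ and absorbing the $|u-D_t x|$-powers into the exponential, each term is dominated by $t^{-n/2}\exp(-c|u-D_t x|^2/t)$ times $t^{-2}$ or $|x|t^{-1}$ (the worst cases), and Lemma~\ref{lemma-integral_dt} with $p+r/2>1$ gives $\lesssim|u-x|^{-n-2+2}=|u-x|^{-n}$ for the $t^{-n/2-2}$-type terms — wait, I need $|u-x|^{-n-1}$, so more care is needed: the term $|u-D_t x|/t^2$ contributes, after absorbing one factor of $|u-D_t x|/\sqrt t$, a net $t^{-n/2-3/2}$, giving $|u-x|^{-n-1}$; similarly $|x|/t$ gives $t^{-n/2-1}|x|$, i.e.\ $p=n/2+1,r=1$, total $p+r/2=n/2+3/2$, hence $|u-x|^{-2p-r+2}=|u-x|^{-n-1}$. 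The cubic and quadratic terms only improve. So \eqref{stima-Mphiloc-Dx} follows. The estimate \eqref{stima-Mphiloc-Du} is proved identically, differentiating in $u$, using Lemma~\ref{lemma-derivatives}($ii$), the bound on $|\mathcal R_\ell|$ from Lemma~\ref{stimaRtttt} (which has the same structure as Lemma~\ref{stimaStttt}), and again the $\nabla_u\eta$ term from \eqref{nablaeta}.

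The one point requiring genuine care — and the main obstacle — is the bookkeeping of which power $t^{-p}|x|^r$ each term of $\mathcal S_\ell$ and $\mathcal R_\ell$ reduces to after the exponential absorption, and checking that the resulting exponent of $|u-x|$ is exactly $-n-1$ (not worse) for every single term, while also confirming $p+r/2>1$ so Lemma~\ref{lemma-integral_dt} applies. Since $|u-x|\lesssim 1/(1+|x|)\le 1$ is small, negative powers of $|u-x|$ are large, so one must verify no term produces an exponent below $-n-1$; inspection of Lemmas~\ref{stimaStttt} and~\ref{stimaRtttt} shows the dominant terms are $|u-D_t x|/t^2$ and $|x|/t$, both yielding precisely $|u-x|^{-n-1}$, with all others strictly better, so the claimed bounds hold. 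I would present this as: reduce via \eqref{def:Pcalli}, \eqref{hej}, Lemma~\ref{lemma-derivatives}, \eqref{nablaeta}; apply \eqref{litet}; absorb; invoke Lemma~\ref{lemma-integral_dt}; collect.

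\begin{proof}
The estimate \eqref{stima-Mphiloc} is immediate from \eqref{def:Pcalli}: since $\|\varphi\|_\infty\le 1$ and $0\le\eta(x,u)\le 1$,
\begin{equation*}
|\mathcal Q(x,u)|\le\int_0^1|\dot{\mathcal K}_t(x,u)|\,dt\lesssim|u-x|^{-n}
\end{equation*}
by Proposition~\ref{lemma-Calderon-prelim}.

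To prove \eqref{stima-Mphiloc-Dx}, we differentiate \eqref{def:Pcalli} with respect to $x_\ell$. This yields two contributions:
\begin{equation*}
\partial_{x_\ell}\mathcal Q(x,u)=
-\int_0^1\varphi(t)\,\partial_{x_\ell}\dot{\mathcal K}_t(x,u)\,dt\;\eta(x,u)
-\int_0^1\varphi(t)\,\dot{\mathcal K}_t(x,u)\,dt\;\partial_{x_\ell}\eta(x,u).
\end{equation*}
For the second contribution, \eqref{nablaeta} and Proposition~\ref{lemma-Calderon-prelim} give a bound $\lesssim|u-x|^{-1}\,|u-x|^{-n}=|u-x|^{-n-1}$. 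For the first contribution, Lemma~\ref{lemma-derivatives}($i$) gives $|\partial_{x_\ell}\dot{\mathcal K}_t(x,u)|=\mathcal K_t(x,u)\,|\mathcal S_\ell(t,x,u)|$, and by \eqref{litet}, Lemma~\ref{stimaStttt} and absorption of all powers of $|u-D_t\,x|/\sqrt t$ into the exponential,
\begin{align*}
\mathcal K_t(x,u)\,|\mathcal S_\ell(t,x,u)|
&\lesssim t^{-n/2}\exp\!\left(-c\,\frac{|u-D_t\,x|^2}{t}\right)
\left(t^{-3/2}+|x|\,t^{-1}\right).
\end{align*}
Integrating over $t\in(0,1)$ and applying Lemma~\ref{lemma-integral_dt} (which is available since $\eta(x,u)>0$ and $x\neq u$) with $(p,r)=(n/2+3/2,0)$ and with $(p,r)=(n/2+1,1)$, both satisfying $p+r/2>1$, we obtain in each case a bound $\lesssim|u-x|^{-2p-r+2}=|u-x|^{-n-1}$. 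This proves \eqref{stima-Mphiloc-Dx}.

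The estimate \eqref{stima-Mphiloc-Du} is proved in the same way. Differentiating \eqref{def:Pcalli} in $u_\ell$, the term in which $\partial_{u_\ell}$ hits $\eta$ is $\lesssim|u-x|^{-n-1}$ by \eqref{nablaeta} and Proposition~\ref{lemma-Calderon-prelim}. For the term in which $\partial_{u_\ell}$ hits $\dot{\mathcal K}_t$, Lemma~\ref{lemma-derivatives}($ii$) gives $|\partial_{u_\ell}\dot{\mathcal K}_t(x,u)|=\mathcal K_t(x,u)\,|\mathcal R_\ell(t,x,u)|$, and by \eqref{litet}, Lemma~\ref{stimaRtttt} and absorption of the $|u-D_t\,x|$-powers into the exponential,
\begin{align*}
\mathcal K_t(x,u)\,|\mathcal R_\ell(t,x,u)|
&\lesssim t^{-n/2}\exp\!\left(-c\,\frac{|u-D_t\,x|^2}{t}\right)
\left(t^{-3/2}+|x|\,t^{-1}\right),
\end{align*}
exactly as before. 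Lemma~\ref{lemma-integral_dt} then yields the bound $\lesssim|u-x|^{-n-1}$, and \eqref{stima-Mphiloc-Du} follows.
\end{proof}
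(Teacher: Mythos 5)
Your proposal is correct and follows essentially the same route as the paper's proof: \eqref{stima-Mphiloc} from Proposition~\ref{lemma-Calderon-prelim}, then the product rule splitting the $\eta$-derivative term (handled via \eqref{nablaeta}) from the main term, which is bounded via Lemma~\ref{lemma-derivatives}, \eqref{litet}, Lemmas~\ref{stimaStttt}--\ref{stimaRtttt}, absorption into the Gaussian, and Lemma~\ref{lemma-integral_dt}. Your bookkeeping of the exponents $(p,r)$ matches the paper's reduction to $|x|/t+t^{-3/2}$ exactly.
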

\begin{proof}
In the light of \eqref{def:Pcalli}                  
one has
\begin{align*}
| \mathcal Q(x,u)|&=
\left|\int_0^{1}
\varphi (t)\,
\dot {\mathcal K}_t (x,u)
\,dt \right|\;\eta(x,u)\le
\int_0^{1}
\big|
\dot {\mathcal K}_t (x,u)\big|
\,dt
 \lesssim  |u-x|^{-n},
\end{align*}
where we used both \eqref{eta_ineq} and Proposition  \ref{lemma-Calderon-prelim}.
Thus \eqref{stima-Mphiloc} is verified.


In order to prove
\eqref{stima-Mphiloc-Dx},
we first observe that  
\begin{equation}  \label{product}
\big|
\partial_{x_\ell}\,\left( \mathcal Q
(x,u)\right)
\big|
\lesssim
 \int_0^{1}
\big|
\partial_{x_\ell}\,
\dot {\mathcal K}_t (x,u)
\big|
\,dt
\,\eta(x,u)+\int_0^{1}
\big|
\dot {\mathcal K}_t (x,u)\big|
\,dt
  \,\big|\partial_{x_\ell}\,\eta(x,u)\big|.
 \end{equation}
 The last term here satisfies the desired estimate because of Proposition \ref{lemma-Calderon-prelim} and
 Lemma \ref{lemma1}($ii$).

 Using  Lemma \ref{lemma-derivatives}{{$(i)$}} and then
 \eqref{litet} and Lemma \ref{stimaStttt}, we can estimate the first  term in \eqref{product}
 by
\begin{align*}
&\int_0^{1}
\big|
{\mathcal K}_t (x,u)\, \mathcal S_\ell (t,x,u)\big|
\,dt
\lesssim
\int_0^{1}
 {t^{-\frac n2}} \,\exp\left(-c\,\frac{|u-D_t\, x |^2}t\right)\\
&\quad\times \left(
|x|\,\frac{\left|u-D_t x\right|^2}{t^2}+
\frac{\left|u-D_t x\right|^3}{t^3}
+
\frac{|u-D_t x|}t
+\frac{| u-D_t x|}{t^2}
+\frac{| x|}{t} \right)
\,dt\\
&\qquad\qquad \lesssim \int_0^{1}
 { }{t^{-\frac n2}}\, \exp\left(-c\,\frac{|u-D_t\, x |^2}t\right)
 \left( \frac{| x|}{t} + \frac{1}{t\sqrt t}
 \right)
\,dt.
\end{align*}
Proposition
\eqref{lemma-integral_dt} says that
 the last expression is controlled
 by $ |u-x|^{-n-1}$, so that \eqref{stima-Mphiloc-Dx} is proved.

The verification of  \eqref{stima-Mphiloc-Du} is analogous, 
with  $\mathcal R_\ell (t,x,u)$ instead of
 $\mathcal S_\ell (t,x,u)$. It is enough to observe that  Lemma \ref{stimaRtttt} implies that
  $|\mathcal R_\ell (t,x,u)|$   is controlled by the right-hand side in the statement of Lemma \ref{stimaStttt}.
\end{proof}

By means of Lemma \ref{CalderonZ2}, Proposition \ref{lemma-Calderon} and Proposition \ref{prop81}
we finally arrive at the goal of this section.

\begin{proposition}\label{propo-locale}
The operator   $m_{0}(T)^{{\rm{loc}}}$
is
 of weak type $(1,1)$ with respect to the invariant measure $d\gamma_\infty$.
\end{proposition}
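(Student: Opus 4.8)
The proof of Proposition~\ref{propo-locale} is now a direct assembly of the machinery developed in this section. The plan is to invoke Proposition~\ref{prop81}, which reduces the claim to showing that the operator $q_T$ defined in \eqref{p_Tdef} is of weak type $(1,1)$ with respect to Lebesgue measure. For this we appeal to the standard Calder\'on--Zygmund theorem: an operator that is bounded on $L^2(dx)$ and whose off-diagonal kernel satisfies the size estimate $|\mathcal Q(x,u)|\lesssim|u-x|^{-n}$ together with the H\"ormander-type gradient bounds $|\nabla_x\mathcal Q(x,u)|+|\nabla_u\mathcal Q(x,u)|\lesssim|u-x|^{-n-1}$ is of weak type $(1,1)$ with respect to Lebesgue measure.

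First I would note that the $L^2(dx)$ boundedness of $q_T$ is exactly the content of Lemma~\ref{CalderonZ2}. Next, the kernel estimates for $\mathcal Q$ are provided by Proposition~\ref{lemma-Calderon}: inequality \eqref{stima-Mphiloc} gives the size bound, while \eqref{stima-Mphiloc-Dx} and \eqref{stima-Mphiloc-Du} give the gradient bounds. One should observe that these estimates are only asserted for pairs $(x,u)$ with $\eta(x,u)\neq0$ and $x\neq u$, but by \eqref{def:Pcalli} the kernel $\mathcal Q(x,u)$ vanishes identically whenever $\eta(x,u)=0$, so the Calder\'on--Zygmund hypotheses hold for all $x\neq u$; moreover, by Lemma~\ref{lemma1}($i$), $\mathcal Q(x,u)$ is supported where $|x-u|\lesssim 1/(1+|x|)\le1$, so no difficulty arises from large separations. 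Feeding these three facts into the Calder\'on--Zygmund machinery yields that $q_T$ is of weak type $(1,1)$ with respect to $dx$, and Proposition~\ref{prop81} then transfers this to the weak type $(1,1)$ of $m_{0}(T)^{{\rm{loc}}}$ with respect to $d\gamma_\infty$.

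There is essentially no obstacle here: all the genuine work---the delicate kernel differentiations in Lemma~\ref{lemma-derivatives}, the pointwise bounds of Lemmas~\ref{stimaStttt} and \ref{stimaRtttt}, the time-integration estimate Lemma~\ref{lemma-integral_dt}, and the $L^2$ bound Lemma~\ref{CalderonZ2}---has already been carried out. The only point deserving a word of care is that $q_T$ has been defined a priori only through its action, with the kernel $\mathcal Q$ identified off the diagonal; one should remark that the standard Calder\'on--Zygmund theory applies in this generality (an $L^2$-bounded operator whose Schwartz kernel away from the diagonal satisfies the size and regularity conditions above), so that the weak-type conclusion is legitimate. With that remark in place, the proof is complete.
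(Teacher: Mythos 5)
Your proposal is correct and follows exactly the route the paper takes: Proposition \ref{prop81} reduces the claim to the weak type $(1,1)$ of $q_T$ for Lebesgue measure, which is then obtained from the $L^2(dx)$ bound of Lemma \ref{CalderonZ2} and the Calder\'on--Zygmund kernel estimates of Proposition \ref{lemma-Calderon}. Your additional remark that $\mathcal Q(x,u)$ vanishes where $\eta(x,u)=0$, so the kernel bounds hold for all $x\neq u$, is a correct and worthwhile clarification of a point the paper leaves implicit.
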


      \section{An auxiliary bound for $0<t \leq 1$}\label{number-of-zeros}


In this section, we verify a bound on
 the number of zeros of the $t$ derivative of  $K_t$ in the interval $(0,1]$, which will
 be used in the next section to control  the kernel $\mathcal M_{0}^{{\rm{glob}}}$.

\begin{proposition}\label{labour day}
For $(x,u)\in \mathbb R^n\times\mathbb R^n$, the
 number of zeros in $I=(0,1]$ of the function  $t \mapsto \dot K_t(x,u)$ is bounded
by a positive integer depending only on   $n$ and $B$.
\end{proposition}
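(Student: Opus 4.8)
The plan is to reduce the statement to a counting bound for zeros of a function that is, up to a positive factor, a polynomial in $t$ composed with a finite family of exponentials, and then invoke a Rolle-type / variation-diminishing argument. Recall from Lemma~\ref{derivate-nucleo} that $\dot K_t(x,u)=K_t(x,u)\,N_t(x,u)$, and $K_t(x,u)>0$ for all $t>0$. Hence the zeros of $t\mapsto\dot K_t(x,u)$ in $I=(0,1]$ are exactly the zeros of $t\mapsto N_t(x,u)$ there, and it suffices to bound the number of the latter.

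First I would examine the structure of $N_t(x,u)$ via the expression \eqref{R}. Writing everything in terms of a Jordan decomposition of $B$ (as was already done in \cite{CCS2} to prove \eqref{est:2-eBs-v}), the matrix entries of $e^{tB}$, $e^{tB^*}$ are of the form $t^k e^{t\mu}$ with $\mu$ among the eigenvalues of $B$ and $k\le n-1$; the entries of $Q_t^{-1}$ are rational functions whose denominators are $\det Q_t$, itself an entire function of a controlled type. The cleaner route is to clear denominators: multiply $N_t(x,u)$ by a suitable positive quantity — e.g. $(\det Q_t)^{N}$ for $N$ depending only on $n$ — so that the result is an entire function $g(t)=g_{x,u}(t)$ of $t$ that is a finite sum $\sum_j p_j(t)\,e^{\lambda_j t}$, where the $\lambda_j$ are fixed (sums of products of eigenvalues of $B$ and $B^*$, so depending only on $B$), the $p_j$ are polynomials whose coefficients depend on $x,u$ but whose degrees are bounded by a constant depending only on $n$, and the number of terms is likewise bounded in terms of $n$. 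Since $\det Q_t>0$ on $I$, the zeros of $g$ in $I$ coincide with those of $N_t(x,u)$, counted without multiplicity.

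Then I would apply the classical bound on the number of real zeros of an exponential polynomial: a function of the form $\sum_{j=1}^{M} p_j(t)e^{\lambda_j t}$ with $\deg p_j\le d$ has at most $\big(\sum_j(d+1)\big)-1 = M(d+1)-1$ real zeros (counted with multiplicity), by an iterated Rolle argument — divide by $e^{\lambda_1 t}$, differentiate $d+1$ times to kill the first term, and induct on $M$. Applied here with $M=M(n,B)$ and $d=d(n)$, this yields that $g$, hence $N_t(x,u)$, hence $\dot K_t(x,u)$, has at most $N(n,B)$ zeros in all of $\mathbb R$, a fortiori in $I=(0,1]$, with $N(n,B)$ depending only on $n$ and $B$, independently of $x$ and $u$. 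This is exactly the assertion of Proposition~\ref{labour day}.

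The main obstacle is purely bookkeeping: one must check that clearing denominators really produces an exponential polynomial of the claimed shape with degree and term-count bounds that do not see $x,u$ — in particular that $\det Q_t$ and the cofactor entries of $Q_t^{-1}$, when expanded, involve only finitely many fixed exponents $e^{\lambda t}$ (sums of real parts of eigenvalue-combinations of $B,B^*$) and polynomial factors of degree at most a constant times $n$. The fact that $Q_t=\int_0^t e^{sB}Qe^{sB^*}ds$ is an explicit antiderivative of an exponential polynomial makes this transparent: each entry of $Q_t$ is itself an exponential polynomial in $t$ of bounded type, $\det Q_t$ is a bounded-degree polynomial in these entries, and Cramer's rule keeps us inside the same class. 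Once that is granted, the Rolle estimate is immediate and the constant depends only on $n$ and $B$ as required; in particular the dependence on $Q$ drops out because the exponents $\lambda_j$ come only from $B$ and $B^*$.
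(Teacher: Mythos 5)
Your reduction is the same as the paper's: since $K_t>0$ and $\det Q_t>0$, the zeros of $\dot K_t(x,u)$ in $(0,1]$ coincide with those of $(\det Q_t)^N N_t(x,u)$, and the bookkeeping showing that this is an exponential polynomial $\sum_j p_j(t)e^{\lambda_j t}$ with term count and degrees bounded in terms of $n$, and exponents determined by $B$ alone, is exactly the paper's Claim \ref{claim2'}. That part of your argument is sound.

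The gap is in the zero-counting step. The classical bound you invoke --- at most $M(d+1)-1$ real zeros for $\sum_{j=1}^M p_j(t)e^{\lambda_j t}$, proved by dividing by $e^{\lambda_1 t}$, differentiating $d+1$ times and inducting --- is valid only when the exponents $\lambda_j$ are \emph{real}. Here the eigenvalues $\nu_j$ of $B$ need only have negative real parts, so the exponents $\sum_j m_j\nu_j$ are in general complex. For a real-valued exponential polynomial with complex exponents the conclusion you draw ("at most $N(n,B)$ zeros in all of $\mathbb R$") is false: $\sin(\mu t)=\frac{1}{2i}\bigl(e^{i\mu t}-e^{-i\mu t}\bigr)$ is of this form and has infinitely many real zeros. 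The Rolle mechanism breaks because dividing by $e^{\lambda_1 t}$ with $\lambda_1\notin\mathbb R$ destroys real-valuedness, and Rolle's theorem does not apply to complex-valued functions. This is precisely the difficulty the paper's proof is built around: conjugate pairs of exponents are absorbed into real second-order factors $S_{\lambda,\mu}=(D-\lambda)^2+\mu^2$, and Claim \ref{claim 5'} gives a Rolle-type bound for such factors only on intervals of length less than $1/\mu$. One must therefore split $(0,1]$ into finitely many short subintervals and run the induction there; the boundedness of the interval is essential and cannot be replaced by a bound on all of $\mathbb R$. To repair your argument you need this extra ingredient (or an equivalent quantitative zero bound for exponential polynomials with complex exponents on a bounded interval, e.g. a Langer/Tur\'an-type estimate); as written, the key step does not go through.
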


\begin{proof}
Instead of $ \dot K_t(x,u)$ we consider $\mathcal N_t(x,u) = 2(\det Q_t)^2\, N_t (x,u)$,
since the three kernels $\dot K_t(x,u)$, $ N_t(x,u)$ and $\mathcal N_t(x,u)$ have exactly the same zeros in $I$.
 From \eqref{R} we have
\begin{align}
\mathcal N_t(x,u)=&-{(\det Q_t)}\,{\tr\big((\det Q_t)Q_t^{-1} \, e^{tB}\, Q\, e^{tB^*}\big)} \label{N'}
\\\notag
&+ \left\langle Q\, e^{tB^*}\,(\det Q_t)Q_t^{-1}\,(u-D_t\, x)\,,\,e^{tB^*}\,(\det Q_t)Q_t^{-1}\,
(u-D_t\, x)\right\rangle
\\ \notag
&
- 2(\det Q_t)
\left\langle Q_\infty \,B^*\, Q_\infty^{-1}\, D_t\, x\,,\,
\left((\det Q_t)Q_t^{-1}-(\det Q_t)Q_\infty^{-1}\right)(u-D_t\, x)\right\rangle;
\end{align}
notice that here we have placed a factor $\det Q_t$ at each occurrence of $Q_t^{-1}$.


We denote by  $\nu_j, \; j = 1,\dots, J$ the eigenvalues of
 $B$, and observe that those which are nonreal  come in conjugate pairs, and that
  $\Re \nu_j < 0$ for all $j$.

\begin{claim}\label{claim2'}
The function $t \mapsto \mathcal N_t(x,u)$ is a finite linear
combination, with coefficients depending on $(x,u)$, of terms which are given by
a product of type $\prod_{j=1}^J e^{m_j\nu_j t }$ multiplied by a polynomial in $t$
with complex coefficients. Here
 $m_j \in \mathbb Z$. Further,  the  quantities $|m_j|$   and the
  degrees of the polynomials are all bounded by  a constant depending only on
  $n$.   This bound also applies to the number of terms.
  \end{claim}

\begin{proof}
  Inspection shows that the last two terms in \eqref{N'} are sums of scalar products of vectors given by multiplying $x$ or $u$ from the left by various combinations
of the matrices $e^{tB}$, $e^{tB^*}$, $D_{ t}$, $Q_t$ and $(\det Q_t) Q_t^{-1}$,
the constant matrices $B^*$, $Q$, $Q_\infty$ and $Q_\infty^{-1}$, and the scalar factor  $\det Q_t$.
The first term in \eqref{N'} is instead the trace of the product of some of these matrices, multiplied by $\det Q_t$.
Let us examine precisely how the matrices listed here depend on $t$.

We pass from $\mathbb R^n$ to $\mathbb C^n$ and make a Jordan decomposition of $B$ via a change of coordinates in $\mathbb C^n$. Each Jordan block is of the form $\nu_j(I + R)$, where $R$ is a supertriangular and thus nilpotent matrix and $I$ is the identity matrix, of some dimension.
Then $\exp(t\nu_j(I + R)) = e^{\nu_j t } P(t)$, where $P(t)$ is a matrix with polynomial
entries in $t$. To arrive at $\exp (tB)$, we put these blocks together and then change coordinates back.
The result will be that
 in the coordinates we had before, each entry of the matrix $\exp (tB)$ is
a sum over $j$ of terms  of type $e^{\nu_j  t } p(t)$, where  $p(t)$ is a complex polynomial that may depend on $j$ and on the entry considered.
  The same will be true
  for the entries of   its adjoint $\exp (tB^*)$.
 From \eqref{defDt} we then see that
  $D_{t}$ is of the same form but with  $e^{-\nu_j  t } $
 instead of $e^{\nu_j  t } $.
        Considering the integral in \eqref{defQt}, we see that the matrix $Q_t$ has similar entries, now with
terms  $e^{(\nu_j +\nu_{j'}) t } p(t)$.  Since the entries of the matrix $(\det Q_t) Q_t^{-1}$
are given by minors of $Q_t$, they will be  a sum of terms which are like those described in Claim \ref{claim2'}. Finally, the scalar $\det Q_t$ also has the same structure.

Claim \ref{claim2'}  now follows, since the
bound on the $|m_j|$ and  the degrees of the polynomials is easily verified.      \end{proof}

We observe that Claim \ref{claim2'} implies that  $\mathcal N_t(x,u)$ can be extended to an entire function
in $t$, and so the number of zeros in $(0,1]$ is finite.

This claim means that  $ \mathcal N_t(x,u)$ is a sum of terms given by a function of $(x,u)$ times an expression
 of type
  \begin{equation}\label{term}
   \exp\left(\sum_j m_j \nu_j t\right) P(t)  = \exp\left( (\lambda + i\mu)t\right) P(t),
  \end{equation}
  where we write  $ \sum_j m_j \nu_j  = \lambda + i\mu$ and $P(t)$ is a complex
   polynomial.

  We will now find a linear differential operator in $t$,  independent of $x$ and $u$, that annihilates  all these expressions and thus also
  $ \mathcal N_t(x,u)$, for all $(x,u)$.   For this we denote  $D = d/dt$.

  If $\mu = 0$         
  the expression in \eqref{term} is annihilated by
   \begin{equation*}
   \left(D-\lambda\right)^{1+\text{deg}\,P}.
  \end{equation*}
  If  $\mu \ne 0$, the same expression is annihilated by the operator
    \begin{equation*}
   (D-\lambda -i\mu)^{1+\text{deg}\,P}.
  \end{equation*}
  Since   $\mathcal N_t(x,u)$  coincides with its real part, there is also a term
   \begin{equation}\label{termconj}
   \exp\left((\lambda -i\mu) t\right)\bar P(t),
  \end{equation}
  again multiplied by a function of $(x,u)$, in the sum forming   $\mathcal N_t(x,u)$. This term is annihilated by
   \begin{equation*}
   (D-\lambda +i\mu)^{1+\text{deg}\,P}.
  \end{equation*}
  Clearly both terms \eqref{term} and \eqref{termconj} are annihilated by the product of the two operators, which is
   \begin{equation*}
  \left( (D-\lambda)^2 +\mu^2)\right)^{1+\text{deg}\,P}.
  \end{equation*}

  Consider now all the terms in the sum giving   $\mathcal N_t(x,u)$.
  It folllows that  $\mathcal N_t(x,u)$ is annihilated by a differential operator
 \begin{align*}
\mathcal P(D) = \prod_{i=1}^{K} P_i(D),
\end{align*}
where each $P_i(D)$ is of either of
 the following two types: a first-order operator
\begin{align*}
T_\lambda= D -\lambda
\end{align*}
or a second-order  operator of the form
\begin{align*}
S_{\lambda,\mu} = (D -\lambda)^2 +\mu^2.
\end{align*}
Here $\lambda\in \mathbb R$ and $\mu \ne 0$.
Clearly all these operators commute, and  $\mathcal P(D)$ is a polynomial in $D$ with coefficients depending only on
$n$ and $B$, and with leading coefficient 1. The number of factors in  $\mathcal P(D)$  has a bound also depending only on $n$ and $B$.

Without restriction, we may assume that $\mu >0$ in each operator  $S_{\lambda,\mu}$, and also that the equation
 $\mathcal P(D)\mathcal N_t(x,u) = 0$ does not allow suppression of any of the factors
  $P_i(D)$  in the  product defining  $\mathcal P(D)$.

Proposition \ref{labour day} is thus reduced
to showing that the number of zeros of  a real-valued  solution of the equation
$\mathcal P(D) \phi = 0$
in $I = (0,1]$ is bounded by a constant depending only on the polynomial
 $\mathcal P$.

Our next claim deals
with one operator $T_\lambda$ or  $S_{\lambda,\mu}$.

\begin{claim}\label{claim 5'}
Let $\lambda \in\mathbb R$ and $\mu>0$, and let  $J\subset \mathbb R$ be a closed interval  of
length less than $1/{\mu}$. Assume that $\phi \in C^2(J)$ is a real-valued function.
If $S_{\lambda,\mu}\,\phi$ does not vanish in the interior  $J^\circ$ of $J$,
then $\phi$ has at most two zeros in $J$.
Further, if $S_{\lambda,\mu}\phi$ has at most $k$
zeros in $J$, then $\phi$ has at most $2k+2$
zeros in the same interval.
The same statements  hold with $S_{\lambda,\mu}$ replaced by $T_\lambda$.
\end{claim}

\begin{proof}
To prove the first assertion about $S_{\lambda,\mu}\,,$
we may take $\lambda =0$ since
\begin{equation*}
  S_{0,\mu}\, \phi (t) = e^{-\lambda t}\, S_{\lambda,\mu} \big( e^{\lambda t} \, \phi(t) ),
\end{equation*}
and we will write $S_{\mu}$ for
$S_{0,\mu}$. The same trick applies to $T_\lambda$.


Since by hypothesis
 $S_\mu\,\phi\neq0$ in $J^\circ$, we may as well take  $S_\mu\,\phi > 0$ there.
We assume by contradiction that  $t_1<t_2<t_3$ are three  zeros of
$\phi$ in $J$.
Then
$\phi''(t_2)=S_\mu\,\phi(t_2)>0$.
We can then assume that $\phi'(t_2)\geq0$, since otherwise
we consider instead the function $\phi(-t)$
in the interval $-J$.
For $t>t_2$ sufficiently close to $t_2$ we  have
$$
\phi(t) =\phi'(t_2)(t-t_2)+\frac12 \,\phi''(t_2) (t-t_2)^2 + o\left((t-t_2)^2\right)
>0.
$$
 Since $\phi(t_3)=0$, the maximal value $M$ of $\phi$ in the interval $[t_2,t_3]$ must be
 assumed at some point $t_M \in (t_2,t_3)$. Clearly $M>0$ and  $\phi'(t_M)=0$.
An integration by parts yields
\begin{align*}
M& = \int_{t_2}^{t_M} \phi'(t)\,dt
= (t-t_2) \phi'(t)|_{t_2}^{t_M}
- \int_{t_2}^{t_M} (t-t_2) \phi''(t)\,dt\\
&= - \int_{t_2}^{t_M} (t-t_2) \phi''(t)\,dt.
\end{align*}

Since here $-\phi''(t) = \mu^2\phi(t) - S_\mu\,\phi(t) < \mu^2\phi(t) \le \mu^2 M$
we conclude that
\begin{align*}
M \leq \mu^2 M \int_{t_2}^{t_M} (t-t_2)dt
=\mu^2 M\, \frac{(t_M-t_2)^2}2
\leq \frac12\,\mu^2 M |J|^2.
\end{align*}
This leads to the contradiction $|J|\geq \sqrt2/\mu$, which proves  the first assertion of the claim.
 The second assertion follows from the first, applied in each of the intervals obtained by deleting from $J$ the zeros of $\phi$.

 For $T_\lambda$ it is enough to apply Rolle's theorem to  $T_0 = D$.
 \end{proof}

\noindent {\it{Conclusion of the proof of Proposition \ref{labour day}}}.
We know that
 $\mathcal P(D)\mathcal N_t(x,u) = 0$,  where
\begin{align*}
\mathcal P(D) = \prod_{i=1}^{K} P_i(D),
\end{align*}
and each $P_i(D)$ is
  $T_{\lambda}$ or $S_{\lambda,\mu}$ for some
with $\lambda\in\R$,  and $\mu>0$ in the case of $S_{\lambda,\mu}$.

Choose a natural number $\kappa$  larger than all the values of $\mu$ appearing here.
Then split $[0,1]$ into $\kappa$ closed subintervals of length $1/\kappa$, and let $J$ be one of these
subintervals. Observe that  Claim  \ref{claim 5'} applies to each $P_i(D)$ in $J$, since  $1/\kappa < 1/\mu$.


Set for $m\in\{2,3,\ldots,K\}$
\begin{align*}
\mathcal N^{(m)}_t(x,u)=\prod_{i=m}^K P_i(D)\; \mathcal N_t(x,u),
\end{align*}
and $\mathcal N^{(K+1)}_t(x,u) =\mathcal N_t(x,u)$.

We will prove by induction that the function $t\mapsto\mathcal N^{(m)}_t(x,u)$ has at most $2^{m}-2$
zeros in $J$, for $m\in\{2,3,\ldots,K+1\}$. Here we fix $(x,u)$. Proposition \ref{labour day} will then follow from the case $m=K+1$.

 Starting with $m=2$, we have
$P_1(D)\, \mathcal N^{(2)}_t(x,u) = 0$, and $\mathcal N^{(2)}_t(x,u)$ is not identically 0  for all $t$.
By means of a conjugation with the factor $e^{\lambda t}$ as in the proof of Claim~\ref{claim 5'}, we can assume that  $P_1(D)$ is either $T_0 = D$ or $S_{0,\mu}$.
If  $P_1(D) = D$, then $\mathcal N^{(2)}_t(x,u)$ is a nonzero constant;  if $P_1(D) = S_{0,\mu}$ we assume that $t=t_0 \in J$ is a zero
of $ \mathcal N^{(2)}_t(x,u)$. Then $ \mathcal N^{(2)}_t(x,u)$ is proportional  to $\sin\left((t-t_0){\mu}\right)$ and can have no other zero
 in  $J$, because $|J| < 1/ \mu$. The first induction step is verified.

Assume the induction step holds for $m$. Then
$P_{m}(D)\,\mathcal N^{(m+1)}_t(x,u) = \mathcal N^{(m)}_t(x,u)$ has at most  $2^{m}-2$ zeros in $J$, and Claim  \ref{claim 5'} implies that the number of zeros of $\mathcal N^{(m+1)}_t(x,u) $ in $J$ is at most
$2(2^{m}-2)+2 = 2^{m+1}-2$. The induction is complete, and so is the proof of Proposition \ref{labour day}.
\end{proof}

\section{Estimates in the global region for small $t$ }\label{est-tsmall-glob-section}

In this section we estimate the operator  $m_{0}^{{\rm{glob}}}(T)$ with kernel
\begin{equation*}
 -\int_0^1
\varphi (t)\dot K_t (x,u) \big(1-\eta(x,u)\big)
\,dt.
\end{equation*}

We shall need
the following theorem.
In order not to burden the exposition, we postpone its proof to the appendix.

\begin{theorem}\label{stima per M con t piccolo}
The maximal operator defined by
\begin{align*}
S_{0}^{{\rm{glob}}}
 f(x) = \int \sup_{0<t\leq 1}K_t(x,u)\,
\big(1-\eta(x,u)\big) \,|f(u)|\, d\gamma_\infty(u)
\end{align*}
is
 of weak type $(1,1)$ with respect to the invariant measure $d\gamma_\infty$.

\end{theorem}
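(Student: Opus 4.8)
The plan is to reduce the weak type $(1,1)$ bound for $S_0^{\mathrm{glob}}$ to an analogous bound for a simpler maximal operator whose kernel does not involve the time variable, and then invoke the enhanced Ornstein--Uhlenbeck maximal theorem. The starting point is the upper bound in \eqref{litet}, which gives, for $0<t\le 1$,
\begin{equation*}
 K_t(x,u) \lesssim \frac{e^{R(x)}}{t^{n/2}}\,\exp\!\left(-c\,\frac{|u-D_t\,x|^2}{t}\right).
\end{equation*}
On the global part, where $\eta(x,u)=0$, Lemma \ref{lemma2} forces $|x-u|\gtrsim 1/(1+|x|)$, and one also has (by Lemma \ref{differ}, exactly as in the proof of Proposition \ref{conv0'}(i)) the lower bound $|u-D_t\,x|\gtrsim |u-x|$ when $t$ is small compared with $|u-x|/|x|$; for the remaining range of $t$ one argues directly. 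This lets us estimate $\sup_{0<t\le 1}K_t(x,u)\,(1-\eta(x,u))$ pointwise by $e^{R(x)}$ times a Gaussian-type factor in $|x-u|$, possibly with polynomial corrections in $(1+|x|)$. The main point I would establish is a bound of the shape
\begin{equation*}
 \sup_{0<t\le 1}K_t(x,u)\,\big(1-\eta(x,u)\big)\lesssim e^{R(x)}\,(1+|x|)^{C}\,\exp\!\big(-c\,(1+|x|)^{2}|x-u|^{2}\big)
\end{equation*}
in the range $|x-u|\lesssim 1$, and an analogous exponentially small bound for $|x-u|\gtrsim 1$ using the polar-coordinate argument from Section \ref{t-large-section} (the extra factor $\exp(-c|\tilde x-\tilde u|^2)$).

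Next I would split $S_0^{\mathrm{glob}}$ accordingly: the contribution from $|x-u|\lesssim 1$ is handled by comparing with the (global part of the) Ornstein--Uhlenbeck maximal operator, and the contribution from $|x-u|\gtrsim 1$ is absorbed using Lemma \ref{bounding-GMAa} together with the observation from Subsection \ref{Simplifications} that points with $R(x)>2\log\alpha$ have small measure and that on $\{R(x)<\tfrac12\log\alpha\}$ the kernel is $\lesssim\alpha$, so only the annulus $\mathcal C_\alpha$ matters. For the near-diagonal piece, after the change of variables passing to Lebesgue measure (as in Section \ref{local region}, but now globally), the kernel $e^{-R(x)}\sup_t K_t(x,u)\,(1-\eta)$ behaves like a sum, over dyadic scales of $(1+|x|)|x-u|$, of $L^1$-normalized bumps, which is exactly the structure controlled by the enhanced maximal theorem [CCS2, Theorem 1.1] in the strengthened form stated in the Appendix.

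Concretely, I expect the cleanest route is: (i) prove the pointwise kernel bound above; (ii) observe that $\sup_{0<t\le1}K_t(x,u)\le C\,\mathcal H_*(|{}\cdot{}|)$-type quantities are dominated, after removing the $\eta$ cutoff outside the diagonal, by the kernel of the standard Ornstein--Uhlenbeck maximal operator up to the polynomial factor $(1+|x|)^C$; (iii) note that this polynomial factor is harmless because on $\mathcal C_\alpha$ one has $(1+|x|)^C\lesssim (\log\alpha)^{C/2}$, which is absorbed by the gain $1/\sqrt{\log\alpha}$ in the enhanced estimate, exactly as in the treatment of $m_1(T)$ in Proposition \ref{stima minfty}; (iv) conclude via the enhanced version of [CCS2, Theorem 1.1]. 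The hard part will be (i): one must track the interplay between the parameter $t$, the matrix $D_t$, and the Euclidean distance $|x-u|$ carefully enough — using Lemma \ref{differ} and the estimates \eqref{est:2-eBs-v} — to show that the supremum over $t$ of the Mehler kernel on the global region is genuinely comparable to a single Gaussian bump at scale $1/(1+|x|)$, rather than losing an unbounded factor from the $t^{-n/2}$ singularity; the delicate regime is $t\simeq |x-u|$, where the exponent and the prefactor compete. Once that pointwise bound is in hand, the weak type $(1,1)$ follows from the Appendix theorem and the reductions already recorded in Subsection \ref{Simplifications}, so the proof is essentially ``kernel estimate plus enhanced maximal theorem.''
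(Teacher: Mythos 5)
Your proposed pointwise kernel bound is false, and the failure occurs exactly where the real difficulty of this theorem lies. Take $u=D_{t_0}x$ with $x\in\mathcal C_\alpha$ and $t_0\in(0,1]$. By Lemma \ref{differ}, $|x-u|\simeq t_0|x|$, so for $t_0\gtrsim|x|^{-2}$ the pair $(x,u)$ is in the global region ($\eta(x,u)=0$), while $\tilde u=\tilde x$ and, by the lower bound in \eqref{litet}, $K_{t_0}(x,u)\gtrsim e^{R(x)}\,t_0^{-n/2}$. Hence along the whole curve $t\mapsto D_t x$ the quantity $\sup_{0<t\le1}K_t(x,u)$ is as large as $e^{R(x)}|x|^{n}$ even though $|x-u|$ ranges up to order $|x|$, and there is no angular gain $\exp(-c|\tilde x-\tilde u|^2)$ available either. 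So neither the bound $e^{R(x)}(1+|x|)^{C}\exp(-c(1+|x|)^{2}|x-u|^{2})$ nor any ``exponentially small bound for $|x-u|\gtrsim1$'' can hold. What is true is only $\sup_{0<t\le1}K_t(x,u)\lesssim e^{R(x)}\min\big(|\tilde u-\tilde x|^{-n},|x|^{n}\big)$ (Lemma \ref{sup small t}); this decays merely polynomially in the angular distance and is concentrated at the $\alpha$-dependent scale $|\tilde u-\tilde x|\simeq|x|^{-1}\simeq(\log\alpha)^{-1/2}$, so it is not dominated by a fixed Gaussian bump and Lemma \ref{bounding-GMAa} does not apply to it.

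The second, structural problem is that your concluding step is circular: the ``enhanced version of [CCS2, Theorem 1.1] in the strengthened form stated in the Appendix'' is precisely the statement to be proved here. The original maximal theorem of [CCS2] controls $\sup_{t}\int K_t\,|f|\,d\gamma_\infty$, whereas in $S_{0}^{\mathrm{glob}}$ the supremum sits inside the integral, and the former does not imply the latter. The actual content of the proof, absent from your outline, is a direct level-set computation in polar coordinates: after Lemma \ref{sup small t} one splits $S_{0}^{\mathrm{glob}}f\lesssim A+B$ according to whether $|\tilde u-\tilde x|\le|x|^{-1}$ or not, and for each piece one estimates $\gamma_\infty\{x\in\mathcal C_\alpha:\,A(x)>\alpha\}$ by introducing $s_0(\tilde x)=\inf\{s:\,A(D_s\tilde x)>\alpha\}$, exploiting $A(D_{s_0(\tilde x)}\tilde x)=\alpha$ together with $\partial_s R(D_s\tilde x)\simeq\log\alpha$ to integrate out the radial variable, and then integrating the resulting bump over the ellipsoid $E_\beta$ against $f$. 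Without this (or some substitute for it), the proof has an essential gap.
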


This is a sharpened version of  the weak type $(1,1)$ estimate for the corresponding part of the maximal
operator treated in \cite{CCS2}, since the supremum in $t$ is now placed inside the integral.
As a consequence,
we can prove the following result, which will complete the proof of Theorem \ref{weaktype1}.

\begin{proposition}\label{propo-mixed-glob-enhanced}
The operator   $m_{0}^{\rm{glob}}(T)  $
is of weak type $(1,1)$ with respect to the invariant measure $d\gamma_\infty$.
\end{proposition}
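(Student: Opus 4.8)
The plan is to reduce the weak type $(1,1)$ bound for $m_{0}^{\rm{glob}}(T)$ to the maximal estimate of Theorem \ref{stima per M con t piccolo}, via the zero-counting result of Proposition \ref{labour day}. First I would recall from Lemma \ref{lemma3} that $m_{0}^{\rm{glob}}(T)$ is genuinely given by its off-diagonal kernel, so that for $f\ge0$ with $\|f\|_{L^1(\gamma_\infty)}=1$,
\begin{equation*}
 |m_{0}^{\rm{glob}}(T)f(x)| \le \int \left| \int_0^1 \varphi(t)\,\dot K_t(x,u)\,dt\right| \bigl(1-\eta(x,u)\bigr)\,|f(u)|\,d\gamma_\infty(u).
\end{equation*}
Since $\|\varphi\|_\infty\le1$, the inner $t$-integral is bounded by $\int_0^1 |\dot K_t(x,u)|\,dt$. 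Now the key point: by Proposition \ref{labour day}, the function $t\mapsto \dot K_t(x,u)$ has at most $N_0=N_0(n,B)$ zeros in $(0,1]$. Splitting $(0,1]$ at these zeros into at most $N_0+1$ subintervals $I_k$ on each of which $\dot K_t(x,u)$ has constant sign, we get
\begin{equation*}
 \int_0^1 |\dot K_t(x,u)|\,dt = \sum_k \left|\int_{I_k} \dot K_t(x,u)\,dt\right| = \sum_k \left| K_{b_k}(x,u)-K_{a_k}(x,u)\right| \le 2(N_0+1)\sup_{0<t\le1} K_t(x,u),
\end{equation*}
where $I_k=[a_k,b_k]$ and we used the fundamental theorem of calculus together with positivity of $K_t$. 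The crucial role of Proposition \ref{labour day} is precisely to convert the integral of $|\dot K_t|$ into a bounded multiple of $\sup_t K_t$; without the uniform bound on the number of zeros this step would fail.

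Combining the two displays, I obtain the pointwise bound
\begin{equation*}
 |m_{0}^{\rm{glob}}(T)f(x)| \le 2(N_0+1)\int \sup_{0<t\le1} K_t(x,u)\,\bigl(1-\eta(x,u)\bigr)\,|f(u)|\,d\gamma_\infty(u) = 2(N_0+1)\, S_{0}^{\rm{glob}}f(x).
\end{equation*}
Since $S_{0}^{\rm{glob}}$ is of weak type $(1,1)$ with respect to $d\gamma_\infty$ by Theorem \ref{stima per M con t piccolo}, and weak type $(1,1)$ bounds are stable under multiplication of the operator by a constant, the operator $m_{0}^{\rm{glob}}(T)$ is of weak type $(1,1)$ with respect to $d\gamma_\infty$. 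Recalling from Subsection \ref{simply} (Proposition \ref{inner}) that the region $\{R(x)<\tfrac12\log\alpha\}$ contributes only $\mathcal M_0^{\rm{glob}}(x,u)\lesssim\alpha$, which is harmless at level $\alpha$ since $\gamma_\infty$ is finite, the estimate on $\mathcal C_\alpha$ (and its exterior, whose measure is $\lesssim1/\alpha$) suffices, and the proof is complete.

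The only genuine subtlety is bookkeeping: one must make sure the boundary values $K_{a_k}(x,u)$, $K_{b_k}(x,u)$ that appear when telescoping across the subintervals are controlled — but since each such value is $\le \sup_{0<t\le1}K_t(x,u)$ and the number of terms is bounded by $2(N_0+1)$, this is immediate. I do not expect any real obstacle here: all the hard analytic work has been front-loaded into Proposition \ref{labour day} (the zero count) and Theorem \ref{stima per M con t piccolo} (the enhanced maximal theorem, proved in the Appendix). Together with Propositions \ref{propo-locale}, \ref{stima minfty} and \ref{inner0}, this yields the full weak type $(1,1)$ of $m(\mathcal L)=m(T)$ on $\mathcal E_0^\perp$, and hence Theorem \ref{weaktype1}.
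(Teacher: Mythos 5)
Your proposal is correct and follows essentially the same route as the paper: Proposition \ref{labour day} converts $\int_0^1|\dot K_t|\,dt$ into a bounded multiple of $\sup_{0<t\le1}K_t(x,u)$ via the fundamental theorem of calculus on the sign-constant subintervals, and Theorem \ref{stima per M con t piccolo} then gives the weak type $(1,1)$ bound. The only cosmetic difference is that the paper explicitly notes that $K_t(x,u)$ vanishes at $t=0$ off the diagonal when telescoping, a point your bookkeeping remark already covers.
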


\begin{proof}
[Proof of Proposition \ref{propo-mixed-glob-enhanced}]
Let $N(x,u)$ be the number
of zeros in $(0,1)$ of the function $t\mapsto \dot K_t(x,u)$.
 Proposition \ref{labour day} says that $N(x,u) \le  \bar N$  for some  constant $\bar N \ge 1$
that is  independent of   $(x,u) \in \R^n \times \R^n$ (and dependent only of $n$ and $B$).
We denote these zeros by $t_1(x,u)< \cdots< t_{N(x,u)}(x,u)$,
and set $t_0(x,u) =0$, \hskip4pt
$t_{N(x,u)+1}(x,u) =1$.
Since $K_t(x,u)$ vanishes at $t=0$,
it follows from the fundamental theorem of calculus that
\begin{multline*}
\int_0^1 \left|\dot K_t(x,u)\right| dt
= \sum_{i=0}^{N(x,u)} \left|\int_{t_i(x,u)}^{t_{i+1}(x,u)} \dot K_t(x,u) dt\right|
\\
= \sum_{i=0}^{N(x,u)} \left|K_{t_{i+1}(x,u)}(x,u) - K_{t_i(x,u)}(x,u)\right|
\\
\leq 2 \sum_{i=0}^{N(x,u)+1}K_{t_i(x,u)}(x,u)
\;\lesssim \;\bar N \sup_{0<t\leq 1}K_{t}(x,u).
\end{multline*}
This inequality implies
\begin{align*}
 \left|m_{0}^{{\rm{glob}}}(T) f(x)\right|
& \leq
\int \int_0^1
\left|\dot K_t (x,u)\right|\, dt \,\left(1-\eta(x,u)\right)
 |f(u)|\, d\gamma_\infty(u)
\\
&\lesssim \bar N\,
\int \sup_{0<t\leq 1} K_{t}(x,u)\,
\left(1-\eta(x,u)\right)
 |f(u)|\, d\gamma_\infty(u),
\end{align*}
and Theorem \ref{stima per M con t piccolo} yields
\begin{align*}
\gamma_\infty\left\{x: \left|m_{0}^{{\rm{glob}}}(T) f(x)\right| > \alpha\right\}
\lesssim \frac1\alpha
\int |f(u)|\, d\gamma_\infty (u).
\end{align*}
 \end{proof}

\section{Appendix: Proof of Theorem \ref{stima per M con t piccolo}}\label{Appendix}
In the proof of this theorem, we take $f\ge 0$  normalized in $L^1(\gamma_\infty)$.
All the simplifications introduced in Subsections~\ref{Simplifications} and \ref{simply}  will apply.
In particular, we let $\alpha$ be large, and we need only consider points  $x$ in $\mathcal C_\alpha$; thus
$|x| \gtrsim 1$.
We will write  $x$ and $u \ne 0$
as
$x=D_s \,\tilde x $
and
$u=D_\sigma \,\tilde u $, respectively,
where $\tilde x,\:\tilde u \in \ellipses_{\beta}$ with $\beta = (\log\alpha)/2$
and $s \ge 0,\;\sigma\in\R$.

\begin{lemma}\label{sup small t}
Let  $(x,u)$
be such that  $\eta(x,u)<1$,
 and  let $x\in\mathcal C_\alpha$.
Then
\begin{align*}
\sup_{0<t\leq 1}K_t(x,u)
&\lesssim
e^{R(x)} \min \left(
|\tilde u-\tilde x|^{-n},\, |x|^n
\right).
\end{align*}
\end{lemma}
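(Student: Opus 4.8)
The plan is to start from the Gaussian upper bound \eqref{litet}, which for $0<t\le 1$ gives $K_t(x,u)\lesssim e^{R(x)}\,t^{-n/2}\exp\big(-c\,|u-D_t x|^2/t\big)$, so that it suffices to bound $\sup_{0<t\le 1}t^{-n/2}\exp\big(-c\,|u-D_t x|^2/t\big)$ by $\min\big(|\tilde u-\tilde x|^{-n},\,|x|^n\big)$. The one elementary tool I would use throughout is the calculus fact that $\sup_{\tau>0}\tau^{-n/2}e^{-b/\tau}\lesssim b^{-n/2}$ for $b>0$; thus, once $|u-D_t x|$ is bounded below by a constant $\Lambda>0$ on a subinterval of $(0,1]$, the supremum of $t^{-n/2}\exp(-c|u-D_t x|^2/t)$ over that subinterval is $\lesssim\Lambda^{-n}$. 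The whole proof then reduces to suitable lower bounds for $|u-D_t x|$ on pieces of $(0,1]$.

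For the bound by $|x|^n$ I would argue as follows. Since $\eta(x,u)<1$, the contrapositive of Lemma \ref{lemma2} gives $|x-u|\gtrsim 1/(1+|x|)$, and because $x\in\mathcal C_\alpha$ with $\alpha$ large one has $|x|\gtrsim 1$, so $|x-u|\gtrsim|x|^{-1}$. By Lemma \ref{differ}, $|x-D_t x|\lesssim t|x|$ for $t\le 1$, hence for $t$ up to a threshold $t_1\simeq|x|^{-2}$ we get $|u-D_t x|\ge|x-u|-|x-D_t x|\ge\tfrac12|x-u|\gtrsim|x|^{-1}$; the elementary fact then bounds $t^{-n/2}\exp(-c|u-D_t x|^2/t)$ by $\lesssim|x|^n$ on $(0,t_1]$. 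On the complementary range $t_1<t\le 1$ I would just use $t^{-n/2}\le t_1^{-n/2}\lesssim|x|^n$ together with $\exp(-c|u-D_t x|^2/t)\le 1$. Combining the two ranges with \eqref{litet} yields $\sup_{0<t\le 1}K_t(x,u)\lesssim e^{R(x)}|x|^n$.

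For the bound by $|\tilde u-\tilde x|^{-n}$, if $\tilde u=\tilde x$ there is nothing to prove (the right-hand side is then infinite), so assume $\tilde u\ne\tilde x$. Writing $x=D_s\tilde x$ and $u=D_\sigma\tilde u$ in polar coordinates with $s\ge 0$ (since $x\in\mathcal C_\alpha$) and $\sigma\in\R$, one has $D_t x=D_{s+t}\tilde x$ with $s+t>0$ for $0<t\le 1$. Then \cite[Lemma~4.3(i)]{CCS2} gives $|u-D_t x|=|D_\sigma\tilde u-D_{s+t}\tilde x|\gtrsim|\tilde u-\tilde x|$, with a constant independent of $t$, $s$ and $\sigma$; the elementary fact now bounds $t^{-n/2}\exp(-c|u-D_t x|^2/t)$ by $\lesssim|\tilde u-\tilde x|^{-n}$ uniformly in $t\in(0,1]$, and with \eqref{litet} this gives $\sup_{0<t\le 1}K_t(x,u)\lesssim e^{R(x)}|\tilde u-\tilde x|^{-n}$. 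Taking the minimum of the two bounds finishes the proof.

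The proof is essentially a matter of assembling ingredients that are already available, so I do not expect a serious obstacle. The only genuinely geometric input is the lower bound $|u-D_t x|\gtrsim|\tilde u-\tilde x|$, which I would quote from \cite[Lemma~4.3(i)]{CCS2} exactly as in the proof of Proposition \ref{stima minfty}; the one point requiring a little care is bookkeeping the two $t$-regimes in the $|x|^n$ bound and checking that the threshold $t_1$ and the implicit constants are consistent, which relies on $|x|\gtrsim 1$ on the annulus $\mathcal C_\alpha$.
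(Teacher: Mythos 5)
Your proof is correct and follows essentially the same route as the paper: the $|\tilde u-\tilde x|^{-n}$ bound via \cite[Lemma~4.3(i)]{CCS2} and the optimization $\sup_{\tau>0}\tau^{-n/2}e^{-b/\tau}\lesssim b^{-n/2}$, and the $|x|^n$ bound via the contrapositive of Lemma~\ref{lemma2} together with Lemma~\ref{differ}. The only cosmetic difference is that you organize the second bound with an explicit threshold $t_1\simeq|x|^{-2}$, whereas the paper phrases the same dichotomy pointwise in $t$ (either $|x|^{-1}\lesssim t|x|$ or $|x|^{-1}\lesssim|D_t x-u|$).
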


\begin{proof}
For the first bound, we use \cite[Lemma 4.3(i)]{CCS2}
to get $|D_t\, x - u| \gtrsim |\tilde x-\tilde u|,$
which by \eqref{litet}  yields
\begin{align*}
\sup_{0<t\leq 1}K_t(x,u)
\lesssim
e^{R(x)} \sup_{0<t\leq 1} t^{- n/2}\, \exp\left(- c\,\frac{|\tilde x-\tilde u|^2}t\right)
\lesssim
e^{R(x)} \,|\tilde x-\tilde u|^{-n}.
\end{align*}
To get the second bound, we deduce from Lemma \ref{lemma2}
and the condition  $\eta(x,u)<1$ that
$ |x-u|\gtrsim \frac1{1+|x|} \simeq \frac1{|x|}$ . Then by Lemma~\ref{differ} we have
\begin{align*}
|x|^{-1}
\lesssim |x- u|
 \le
|x-D_t\, x| +|D_t\, x -u|
\lesssim
t |x|+|D_t\, x -u|.
\end{align*}
Thus  $|x|^{-1} \lesssim t |x|$ or $|x|^{-1} \lesssim |D_t \,x -u|$. In the first case,
$ t^{- n/2} \lesssim |x|^n $, and the desired estimate is immediate from \eqref{litet}.
In the second case,
\begin{align*}
K_t(x,u)
 \lesssim
e^{R(x)} \, t^{-\frac n2}\, \exp\left(- \frac c{t|x|^2}\right)
 \lesssim
e^{R(x)}\, |x|^{n}.
\end{align*}
The lemma is proved.
\end{proof}


To continue the proof of Theorem     \ref{stima per M con t piccolo},
  we conclude from  Lemma \ref{sup small t} that for $x\in\mathcal C_\alpha$
\begin{align*}
S_{0}^{{\rm{glob}}} f(x)
\lesssim e^{R(x)} \int
\min \left(|\tilde u-\tilde x|^{-n},\, |x|^n\right) f(u)\, d\gamma_\infty(u)
=A(x)+B(x),
\end{align*}
where
\begin{align} \label{defA}
A(x) &= |x|^n\, e^{R(x)}
\int_{\{u:\,|x| \leq |\tilde u-\tilde x|^{-1}\}}
f(u)\, d\gamma_\infty(u)
\end{align}
and
\begin{align*}
B(x) &= e^{R(x)} \int_{\{u:\,|x| > |\tilde u-\tilde x|^{-1}\}}
|\tilde u-\tilde x|^{-n}\, f(u) \,d\gamma_\infty(u).
\end{align*}
We will
show that
\begin{align}\label{stime per A}
\gamma_\infty\left\{
x\in\mathcal C_\alpha: A(x) >
{\alpha}
\right\}
\lesssim  \alpha^{-1}
\end{align}
and
\begin{align}\label{stime per B}
\gamma_\infty\left\{x\in\mathcal C_\alpha: B(x) >
{\alpha}\right\}
\lesssim  \alpha^{-1}.
\end{align}

Starting with \eqref{stime per A}, we first observe that $A(\tilde x) < {\alpha}$ for
 $\tilde x \in E_\beta$ with $\beta = (\log \alpha)/2$, because
\begin{equation*}
  A(\tilde x) \le |\tilde x|^n\, e^{R(\tilde x)}
\int_{\R^n}
f(u)\, d\gamma_\infty(u) \lesssim (\log \alpha)^n \,\sqrt \alpha < \alpha \,,
\end{equation*}
and $\alpha$ is large.
Further, $x = D_s\,\tilde x \in \mathcal C_\alpha$ implies $0 < s \lesssim 1$ in view  of
\cite[formula (4.3)]{CCS2}.
Let
\begin{equation*}
E_\beta^0 = \{ \tilde x \in E_\beta:   A(D_s\,\tilde x) >\alpha      \;  \;\mathrm{for \hskip5pt some}\; s>0  \;\mathrm{with}
\; \;  D_s\,\tilde x \in \mathcal C_\alpha           \},
\end{equation*}
and define for $\tilde x \in E_\beta^0$
\begin{equation*}
s_0(\tilde x) = \inf \{s: D_s\,\tilde x \in \mathcal C_\alpha \; \;\mathrm{and}
\;\; A(D_s\,\tilde x) >\alpha\}.
\end{equation*}
Then  $0 < s_0(\tilde x) \lesssim 1$ and $A(D_{s_0(\tilde x)} \,\tilde x) = \alpha$.
Moreover, if $A(D_s\,\tilde x) >\alpha$ for some
$D_s\,\tilde x \in \mathcal C_\alpha$, then $\tilde x \in E_\beta^0$ and $s > s_0(\tilde x)$.
In the set $\mathcal C_\alpha$, the expression \eqref{def:leb-meas-pulita} for the Lebesgue measure yields
$dx \simeq \sqrt {\log\alpha} \, dS_\beta\,ds$, and so
\begin{equation*}
\gamma_\infty\left\{
x\in\mathcal C_\alpha: A(x) > {\alpha} \right\}
\lesssim \sqrt {\log\alpha}\,\int_{E_\beta^0} \, \int_{s_0(\tilde x)}^C
e^{-R(D_s\,\tilde x)}\,ds\,dS_\beta(\tilde x).
\end{equation*}
 We now write ${R(D_s\,\tilde x)} = {R(D_{s_0(\tilde x)}\,\tilde x)}\, +\,
{R(D_s\,\tilde x)} \,- \,{R(D_{s_0(\tilde x)}\,\tilde x)}$
and apply  the Mean Value Theorem to the difference here, observing that
               $\partial_s R(D_s \,\tilde x)
\simeq |D_s\, \tilde x|^2
\simeq \log \alpha$
               because of  \cite[formula (4.3)]{CCS2}.
This leads to
\begin{align} \label{levelset}
\gamma_\infty\left\{
x\in\mathcal C_\alpha: \, A(x) > {\alpha} \right\}
 &\lesssim \,\sqrt {\log\alpha} \int_{E_\beta^0} \,e^{-R(D_{s_0(\tilde x)}\,\tilde x)}\,
\int_{s_0(\tilde x)}^\infty
e^{-c (s-s_0(\tilde x))\log\alpha}\,ds\,dS_\beta(\tilde x)    \notag     \\
 &\lesssim \, \frac 1 {\sqrt {\log\alpha}}\, \int_{E_\beta^0} \,
e^{-R(D_{s_0(\tilde x)}\,\tilde x)}\,dS_\beta(\tilde x).
\end{align}

To deal with  the last expression here, we insert the factor $\alpha^{-1}\,A(D_{s_0(\tilde x)}\tilde x) = 1$ in the integral, using the definition \eqref{defA} of $A(.)$. The two exponentials will then cancel.
 We also use the fact that
$\frac12\sqrt {\log\alpha} \le |D_{s_0(\tilde x)}\tilde x| \le 2\sqrt {\log\alpha}$, both to rewrite the first factor in this definition and to extend the domain of integration in $u$ to  $\{u:\,|\tilde u-\tilde x|\le 2 {(\log\alpha)^{-1/2}}\}$. The result is that the last quantity in \eqref{levelset} is at most constant times
\begin{align*}
 \frac1\alpha \,{(\log\alpha)^\frac{n-1}2}
\int_{{ \ellipses_\beta^0}}
\int_{\{u:\,|\tilde u-\tilde x|\le 2 {(\log\alpha)^{-1/2}}\}}
f(u)\, d\gamma_\infty(u)\,
dS_\beta(\tilde x)
& = \\
\frac1\alpha\,{(\log\alpha)^\frac{n-1}2}
\int
 f(u)
\int_{\{\tilde x:\,|\tilde u-\tilde x|\le 2
{(\log\alpha)^{-1/2}}
\}}\,
dS_\beta(\tilde x)\,
d\gamma_\infty(u)
& \lesssim  \,
\frac1\alpha\,
\int f(u)\, d\gamma_\infty(u)
=
\frac1\alpha.
\end{align*}

This proves \eqref{stime per A}, and we move to \eqref{stime per B}. Here
we similarly have $B(\tilde x) < \alpha$ for  $\tilde x \in E_\beta$, and we can define
$ E_\beta^0$  and $s_0(\tilde x)$ as above,  replacing   $A(.)$ by $B(.)$. The rest of the argument is
only slightly different from that for \eqref{stime per A}; we now have
\begin{multline*}
\gamma_\infty \{x\in\mathcal C_\alpha:\,  B(x) >
\alpha \}
\lesssim
\frac1{\sqrt{\log\alpha}}\,
\int_{ \ellipses_\beta^0}
\exp{\left(- R(D_{s_0(\tilde x)}\,\tilde x)\right)}
\,dS_\beta(\tilde x)
\\
 \quad  \qquad \lesssim
\frac1{\alpha}\,
\frac1{\sqrt{\log\alpha}}\,
\int_{ \ellipses^0_\beta}
\int_{\{u:\,|\tilde u-\tilde x| >
(\log\alpha)^{-1/2}/2\}}
|\tilde u-\tilde x|^{-n} f(u)\, d\gamma_\infty(u)\,
dS_\beta(\tilde x)
\\
=
\frac1\alpha\,
\frac1{\sqrt{\log\alpha}}\,
\int f(u)
\int_{\{\tilde x:\,|\tilde u-\tilde x| >
(\log\alpha)^{-1/2}/2 \}}\,|\tilde u-\tilde x|^{-n}\,
dS_\beta(\tilde x)\,
d\gamma_\infty(u)
\lesssim
\frac1\alpha.
\end{multline*}
This is \eqref{stime per B}, and Theorem \ref{stima per M con t piccolo} is proved.

\medskip

In order to  prove Proposition \ref{propo-mixed-glob-enhanced},
Theorem \ref{stima per M con t piccolo}
is enough, as we saw in the preceding section.
However, we take the opportunity to give
the following related result, which strengthens Theorem \ref{stima per M con t piccolo} and also
Theorem 1.1 in
 \cite{CCS2}                         
and  may be of independent interest.
\begin{theorem}\label{th-full-maximal}
The operator $ S^{{\rm{glob}}}$ defined by
\begin{align}\label{da stimare full}
 S^{{\rm{glob}}} f(x) = \int \sup_{0<t<\infty}K_t(x,u)
\,(1-\eta(x,u))
 |f(u)|\, d\gamma_\infty(u), \qquad f\in L^1(\R^n),
\end{align}
is of  weak type $(1,1)$ for the measure $d\gamma_\infty$.
\end{theorem}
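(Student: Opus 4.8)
The plan is to reduce Theorem \ref{th-full-maximal} to Theorem \ref{stima per M con t piccolo} by splitting the supremum over all $t>0$ according to whether $t\le 1$ or $t\ge 1$. Writing
\begin{equation*}
\sup_{0<t<\infty}K_t(x,u)=\max\Big(\sup_{0<t\le 1}K_t(x,u),\ \sup_{t\ge 1}K_t(x,u)\Big)\le \sup_{0<t\le 1}K_t(x,u)+\sup_{t\ge 1}K_t(x,u),
\end{equation*}
one gets $S^{{\rm{glob}}}f\le S_0^{{\rm{glob}}}f+S_\infty^{{\rm{glob}}}f$, where $S_0^{{\rm{glob}}}$ is precisely the operator of Theorem \ref{stima per M con t piccolo} (already of weak type $(1,1)$) and $S_\infty^{{\rm{glob}}}$ is the analogous operator with $\sup_{t\ge 1}$ in place of $\sup_{0<t\le 1}$. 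Thus everything reduces to proving the weak type $(1,1)$ of $S_\infty^{{\rm{glob}}}$, and for this I would take $f\ge 0$ with $\|f\|_{L^1(\gamma_\infty)}=1$ and $\alpha>2$ large, as usual.

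First I would carry out the standard reduction to the ellipsoidal annulus $\mathcal C_\alpha$, exactly as in the proof of Theorem \ref{stima per M con t piccolo}: the set $\{R(x)>2\log\alpha\}$ has $\gamma_\infty$-measure $\lesssim 1/\alpha$ and may be discarded, while on $\{R(x)<\tfrac12\log\alpha\}$ the bound \eqref{tstort} gives $\sup_{t\ge 1}K_t(x,u)\lesssim e^{R(x)}\lesssim\sqrt\alpha$, so that $S_\infty^{{\rm{glob}}}f(x)\lesssim\sqrt\alpha=o(\alpha)$ and these points contribute nothing to the level set $\{S_\infty^{{\rm{glob}}}f>\alpha\}$. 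Here the cut-off $1-\eta(x,u)$ plays no role for $t\ge 1$, so I would simply bound it by $1$.

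Then, for $x\in\mathcal C_\alpha$, I would pass to polar coordinates, writing $x=D_s\,\tilde x$ and $u=D_\sigma\,\tilde u$ with $\tilde x,\tilde u\in\ellipses_\beta$, $\beta=(\log\alpha)/2$, $s\ge 0$, $\sigma\in\R$ (the set $u=0$ being negligible). For $t\ge 1$ one has $D_{-t}u=D_{\sigma-t}\,\tilde u$, and \cite[Lemma 4.3(i)]{CCS2} gives $|D_{-t}u-x|=|D_{\sigma-t}\,\tilde u-D_s\,\tilde x|\gtrsim|\tilde x-\tilde u|$, exactly as in the proof of Proposition \ref{stima minfty}. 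Combining this with the upper bound in \eqref{tstort} and with $|\cdot|_Q\simeq|\cdot|$ yields, uniformly in $t\ge 1$,
\begin{equation*}
K_t(x,u)\lesssim e^{R(x)}\exp\big(-c\,|\tilde x-\tilde u|^2\big),
\end{equation*}
so that $S_\infty^{{\rm{glob}}}f(x)\lesssim e^{R(x)}\int\exp(-c\,|\tilde x-\tilde u|^2)\,f(u)\,d\gamma_\infty(u)$ for $x\in\mathcal C_\alpha$. At this point Lemma \ref{bounding-GMAa} applies directly and gives $\gamma_\infty\{x\in\mathcal C_\alpha:S_\infty^{{\rm{glob}}}f(x)>\alpha\}\lesssim(\alpha\sqrt{\log\alpha})^{-1}\le\alpha^{-1}$, which, together with the reductions above, finishes the argument.

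I do not expect a genuine obstacle: the whole point is that for $t\ge 1$ the Mehler kernel is already well behaved and decays off a fixed region, so the proof is considerably softer than the small-$t$ estimate of Theorem \ref{stima per M con t piccolo}. The only things one really needs to check are that \cite[Lemma 4.3(i)]{CCS2} is applicable over the full range $t\ge 1$, $\sigma\in\R$, $s\ge 0$ — but this is precisely the estimate already exploited in Section \ref{t-large-section} — and that the reduction to $\mathcal C_\alpha$ goes through verbatim from the Appendix proof of Theorem \ref{stima per M con t piccolo}. The mildly delicate point, as always, is obtaining the $\mathcal C_\alpha$-reduction for the interior region $\{R(x)<\tfrac12\log\alpha\}$; for $S_\infty^{{\rm{glob}}}$ this follows trivially from \eqref{tstort}, and for $S_0^{{\rm{glob}}}$ it is already contained in Theorem \ref{stima per M con t piccolo}, so no additional work is needed.
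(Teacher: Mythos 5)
Your proposal is correct and follows essentially the same route as the paper: the paper also splits off the range $t\ge 1$, notes that the interior region $\{R(x)<\tfrac12\log\alpha\}$ is harmless because $K_t(x,u)\lesssim e^{R(x)}$ there, and on $\mathcal C_\alpha$ combines \eqref{tstort} with \cite[Lemma 4.3(i)]{CCS2} in polar coordinates before invoking Lemma \ref{bounding-GMAa}. The only cosmetic difference is that the paper states the large-$t$ part as a separate proposition (without the factor $1-\eta$, exactly as you do) and records the slightly stronger bound $\lesssim (\alpha\sqrt{\log\alpha})^{-1}$.
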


This result is a consequence of
 Theorem \ref{stima per M con t piccolo}
and the following proposition.

\begin{proposition}\label{stima per M con t piccolovv}
The operator $S_{\infty}$, defined by
\begin{align*}
S_{\infty}
 f(x) = \int \sup_{t\geq 1}K_t(x,u)\,
| f(u)| \, d\gamma_\infty(u),
\end{align*}
satisfies the  inequality
\begin{equation}
\gamma_\infty\{x: S_{\infty} f(x)>\alpha\}
\lesssim \frac1{\alpha\,\sqrt{\log \alpha}}   \label{3}
\end{equation}
for all normalized functions $f$ in $L^1(\gamma_\infty)$
and all $\alpha >2$.
\end{proposition}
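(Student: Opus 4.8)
The plan is to follow the proof of Proposition \ref{stima minfty} almost verbatim, the one new point being that the supremum over $t\ge1$ must be absorbed into a pointwise kernel bound before the level set is estimated. First I would take $f\ge0$ with $\|f\|_{L^1(\misgaussk)}=1$ and reduce to $\alpha$ large: for $2<\alpha\le C_0$ with $C_0$ a fixed constant, \eqref{3} is trivial, since $\misgaussk$ is a probability measure while $\frac{1}{\alpha\sqrt{\log\alpha}}\simeq1$. Then I would discard the complement of the annulus $\mathcal C_\alpha$ of \eqref{crown}, as in Subsection \ref{Simplifications}. The outer part $\{x:R(x)>2\log\alpha\}$ has $\misgaussk$ measure $\lesssim\alpha^{-2}(\log\alpha)^{n/2}$ (a standard Gaussian tail estimate, $R$ being a positive definite quadratic form), which for large $\alpha$ is much smaller than $\frac{1}{\alpha\sqrt{\log\alpha}}$. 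On the inner part $\{x:R(x)<\tfrac12\log\alpha\}$ the upper bound in \eqref{tstort} gives $\sup_{t\ge1}K_t(x,u)\lesssim e^{R(x)}<\sqrt\alpha$ for every $u$, hence $S_\infty f(x)\lesssim\sqrt\alpha<\alpha$ once $\alpha$ is large, so this part never meets $\{S_\infty f>\alpha\}$. It therefore remains to estimate $S_\infty f$ on $\mathcal C_\alpha$.

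On $\mathcal C_\alpha$ I would establish the sharpened bound
\begin{equation*}
\sup_{t\ge1}K_t(x,u)\lesssim e^{R(x)}\,\exp\big(-c\,|\tilde x-\tilde u|^2\big),\qquad u\ne0,
\end{equation*}
where $x=D_s\,\tilde x$ and $u=D_\sigma\,\tilde u$ with $\tilde x,\tilde u\in E_{(\log\alpha)/2}$, $s\ge0$, $\sigma\in\R$. For any $t\ge1$ one has $D_{-t}u-x=D_{\sigma-t}\,\tilde u-D_s\,\tilde x$, and since $s\ge0$, \cite[Lemma 4.3(i)]{CCS2} yields $|D_{-t}u-x|\gtrsim|\tilde x-\tilde u|$ with a constant independent of $t$. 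Inserting this into the upper bound of \eqref{tstort}, together with $|\cdot|_Q\simeq|\cdot|$, produces $K_t(x,u)\lesssim e^{R(x)}\exp(-c|\tilde x-\tilde u|^2)$ uniformly in $t\ge1$; the essential feature is that the right-hand side is now $t$-free, so taking the supremum costs nothing. Consequently, for $x\in\mathcal C_\alpha$,
\begin{equation*}
S_\infty f(x)\lesssim e^{R(x)}\int\exp\big(-c\,|\tilde x-\tilde u|^2\big)\,f(u)\,\misgausskd(u).
\end{equation*}

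The last step is to feed this into Lemma \ref{bounding-GMAa} (the case $\sigma=1$ of \cite[Lemma 7.2]{CCS3}), whose right-hand side is exactly $\frac{C}{\alpha\sqrt{\log\alpha}}$; absorbing the implicit constant into a harmless rescaling of $\alpha$, this gives $\misgaussk\{x\in\mathcal C_\alpha:S_\infty f(x)>\alpha\}\lesssim\frac{1}{\alpha\sqrt{\log\alpha}}$, and together with the two discarded regions this proves \eqref{3}. I do not anticipate a real obstacle: once the uniform-in-$t$ lower bound for $|D_{-t}u-x|$ is observed, everything is a transcription of the $m_1(T)$ argument. The only thing that asks for a small amount of care is the disposal of the inner region $\{R(x)<\tfrac12\log\alpha\}$, which succeeds because there the crude bound $\sup_{t\ge1}K_t(x,u)\lesssim e^{R(x)}$ already dominates $\alpha$.
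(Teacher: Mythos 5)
Your proposal is correct and follows essentially the same route as the paper: reduce to $x\in\mathcal C_\alpha$ via the simplifications of Subsections \ref{Simplifications}--\ref{simply}, obtain the $t$-uniform bound $\sup_{t\ge1}K_t(x,u)\lesssim e^{R(x)}\exp(-c|\tilde x-\tilde u|^2)$ by combining \eqref{tstort} with \cite[Lemma 4.3(i)]{CCS2}, and conclude with Lemma \ref{bounding-GMAa}. Your extra care in checking that the discarded outer region $\{R(x)>2\log\alpha\}$ has measure small compared to $1/(\alpha\sqrt{\log\alpha})$ (not merely $C/\alpha$) is a point the paper leaves implicit, and it is handled correctly.
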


\begin{proof}
Let $t\ge 1$.
The simplifications in Subsections \ref{Simplifications} and \ref{simply}
apply again, since
$K_t(x,u) \lesssim e^{R(x)} < \alpha$ if $R(x) < (\log \alpha)/2$.
For $x\in\mathcal C_\alpha$, a combination of
 \eqref{tstort} and  \cite[Lemma 4.3{\rm{(i)}}]{CCS2}
implies
\begin{align*}
K_t
(x,u)
&\lesssim
e^{R(x)} \, \exp
 \big(
- c\,
\big|
\tilde u-\tilde x\big|^2
\big),
\end{align*}
where we use polar coordinates with $\beta = (\log\alpha)/2$.
The proposition now
follows from Lemma \ref{bounding-GMAa}.
\end{proof}

\begin{remark}
The inequality \eqref{3},
which is sharp as verified in \cite[Proposition 6.2]{CCS2}, is slightly stronger than the weak type $(1,1)$ estimate in
Theorem \ref{th-full-maximal}. The corresponding
estimate for the operator
$S_{0}^{{\rm{glob}}} $
 is false,
  since $f$ approximating a point mass at $0$ gives a counterexample.
\end{remark}

\begin{remark}
 In the case $Q=I$ and $B=-I$ an estimate similar to Lemma~\ref{sup small t}
  with a kernel $\overline M$ controlling from above the Mehler kernel $K_t$ in the global region, has recently been proved in \cite{Bruno}
 (see, in particular,   Definition~3.2 and Proposition~3.4
 therein).
 An earlier result of this type may be found in \cite[Proposition 2.1]{Soria-jlms}.
 These estimates are sharp for significant values of $(x,u)$, whereas our Theorem  \ref{th-full-maximal}
 is simpler, and sufficient for our needs.
  Moreover, Proposition    \ref{stima per M con t piccolovv}
 is stronger than the analogous bounds in \cite{Bruno}
 and \cite{Soria-jlms}.

\end{remark}


\begin{thebibliography}{MMMM}

\bibitem[ABQR]{Almeida}
V. Almeida, J. J. Betancor, P. Quijano and L. Rodr\'iguez-Mesa,
Maximal operator, Littlewood--Paley functions and variation operators associated with nonsymmetric Ornstein-Uhlenbeck operators,
{\em Mediterr. J. Math.} \textbf{20}, 196 (2023).


\bibitem[ABFQR]{Almeida2}
V. Almeida, J.J. Betancor, J.C. Fariña, P. Quijano, L. Rodríguez-Mesa, Littlewood--Paley functions associated with general Ornstein-Uhlenbeck semigroups,
	{\tt arXiv:2202.06136}. 


\bibitem[B]{Bernstein}
D. S. Bernstein,
{\em Matrix Mathematics. Theory, Facts, and Formulas}, (2009),
 Second Edition,
 Princeton University Press.


\bibitem[Bo]{Bogachev}
 V. I. Bogachev,
 Ornstein--Uhlenbeck operators and semigroups,
{\em Russ. Math. Surv.} (2018), \textbf{73}, 191--260.


\bibitem[Br]{Bruno}
T. Bruno,
Endpoint results for the Riesz transform of the Ornstein--Uhlenbeck operator,
{\em J. Fourier Anal. Appl.}, (2019), \textbf{25}, 1609--1631.

\bibitem[CaD]{Carbonaro-Oliver}
A. Carbonaro and O. Dragi\v cevi\'c,
Bounded holomorphic functional calculus for nonsymmetric Ornstein-Uhlenbeck operators,
 {\em Ann. Sc. Norm. Super. Pisa Cl. Sci.},
\textbf{XIX} (5) (2019), 1497--1533.



\bibitem[CCS1]{CCS1}
V. Casarino, P. Ciatti and P. Sj\"ogren,
The maximal operator of a normal  Ornstein-Uhlenbeck  semigroup is of  weak type (1,1),
 {\em Ann. Sc. Norm. Sup. Pisa Cl. Sci.} (5)  {\textbf{XXI}} (2020), 385-410.

\bibitem[CCS2]{CCS2}
 V. Casarino, P. Ciatti and P. Sj\"ogren,
{ On the maximal operator of a
general Ornstein--Uhlenbeck  semigroup}, {\em Math. Z.} \textbf{301} (2022), 2393--2413.

\bibitem[CCS3]{CCS3}
V. Casarino, P. Ciatti and P. Sj\"ogren,
{Riesz transforms of a general Ornstein--Uhlenbeck semigroup},
{\em Calculus of Variations and Partial Differential Equations},
\textbf{60}, 135 (2021).

\bibitem[CCS4]{CCS4}
V. Casarino, P. Ciatti and P. Sj\"ogren,
  On the orthogonality of generalized eigenspaces for the  Ornstein--Uhlenbeck  operator,
  {\em Arch. Math.}  \textbf{117}
(2021),
 547--556.

\bibitem[CCS5]{CCS6}
V. Casarino, P. Ciatti and P. Sj\"ogren, {On the variation operator for the Ornstein--Uhlenbeck  semigroup
in dimension one},  {\em Annali di Matematica Pura e Applicata},  (2023), https://doi.org/10.1007/s10231-023-01358-3.

  \bibitem[CDMY]{CDMY}
 M. Cowling,  I. Doust, A. McIntosh and  A. Yagi,
  Banach space operators with a bounded $H^\infty$ functional calculus,
  {\em J. Austral. Math. Soc. (Series A) }, \textbf{60}  (1996), 51--89.


\bibitem[CFMP1]{CFMP1}
R. Chill, E. Fa\v{s}angov\'a, G. Metafune and D. Pallara,
The sector of analyticity of the Ornstein--Uhlenbeck semigroup in $L^p$ spaces
with respect to invariant measure,
{\em J. Lond. Math. Soc.} \textbf{71}, (2005),  703--722.

\bibitem[CFMP2]{CFMP2}
R. Chill, E. Fa\v{s}angov\'a, G. Metafune and D. Pallara,
 The sector of analyticity of nonsymmetric submarkovian semigroups generated by elliptic operators,
 {\em C. R. Math. Acad. Sci. Paris} \textbf{342}, (2006), 909--914.

\bibitem[CiMP1]{CMP1}
    A. Cianchi, V. Musil and L. Pick,
     Moser inequalities in Gauss space,
{\em Math. Ann.}  (2020), \textbf{377}, 1265--1312.

\bibitem[CiMP2]{CMP2}
    A. Cianchi, V. Musil and L. Pick,
 On the Existence of Extremals for Moser-Type Inequalities in Gauss Space,
{\em Int. Math. Res. Not.} (2022),  \text{2022}, Issue 2,  1494--1537.

\bibitem[CiMP3]{CMP3}
    A. Cianchi, V. Musil and L. Pick,
 Sharp exponential inequalities for the Ornstein-Uhlenbeck operator,
{\em  J.  Funct. Anal.}
\textbf{281} (2021),  no. 11, Paper No. 109217.



\bibitem[EN]{Engel}
K. J. Engel and R. Nagel, {\em One parameter semigroups for linear evolution equations},
194
Springer Graduate Texts in Mathematics, 2000.

\bibitem[GMST1]{GCMSTRiesz}
J. Garc\'ia-Cuerva, G. Mauceri, P. Sj\"ogren and J. L. Torrea,
 Higher-order Riesz operators for the Ornstein-Uhlenbeck semigroup,
 {\em Potential Anal.},
{\textbf{10}}
 (1999),  379--407.


\bibitem[GMST2]{GCMST}
J. Garc\'ia-Cuerva, G. Mauceri, P. Sj\"ogren and J. L. Torrea,
Spectral multipliers for the Ornstein-Uhlenbeck semigroup,
 {\em  J. Anal. Math.},
{\textbf{78}}
 (1999),  281--305.


\bibitem[HP]{HP}

M. Haase and F. Pannasch,
Holomorphic H\"ormander-Type Functional Calculus on Sectors and Strips,
{\tt  	arXiv:2204.03936}.


\bibitem[K1]{K1}
M. Kemppainen,
An $L^1$-estimate for certain
spectral
multipliers
associated
with the Ornstein--Uhlenbeck
operator, {\em J. Fourier Anal. Appl.} (2016)
 \textbf{22}(6), 1416--1430.

\bibitem[K2]{K2}
M. Kemppainen,
  Admissible decomposition for spectral multipliers on Gaussian $L^p$,
{\em  Math. Z.} (2018) {\textbf{289}}, 983--994.


\bibitem[LB]{Lorenzi}
 L. Lorenzi and M. Bertoldi,
{\em Analytical methods for Markov semigroups},
Pure and Applied Mathematics (Boca Raton), 283, Chapman \& Hall/CRC, Boca Raton, FL, 2007.

\bibitem[M]{M}
A. McIntosh,
Operators which have an $H_\infty$ functional calculus, {\em Miniconference
on operator theory and partial differential equations}, (Proc.\ Centre Math. Analysis 24, ANU, Canberra, 1986), 210--231.

\bibitem[MPS]{Soria-jlms}
 T. Men\'arguez,
S. P\'erez and F. Soria,
The Mehler maximal function: a geometric proof of the weak type 1,
{\em J. London Math. Soc.}
(2000), \textbf{61},
 846--856.

\bibitem[MPP]{MPP}
G.Metafune, D. Pallara and E. Priola,
Spectrum of Ornstein-Uhlenbeck operators in $L^p$ spaces with respect to invariant measures,
{\em{J.  Funct. Anal.}} \textbf{196} (2002), 40--60.


\bibitem[MPRS]{MPRS}
G. Metafune, J. Pr\"uss, A. Rhandi, and R. Schnaubelt,
The domain of the Ornstein-Uhlenbeck
operator on a $L^p$-space with invariant measure,
{\em Ann. Sc. Norm. Super. Pisa Cl. Sci.} \textbf{1}, (2002)   471--487.

\bibitem[OU]{OU}
L. S. Ornstein and G. E.
Uhlenbeck,
 On the theory of Brownian Motion,
{\em Phys. Rev.} \textbf{36},  (1930) 823--841.

\bibitem[P]{P}
F. Pannasch,
The Holomorphic H\"ormander Functional Calculus. Ph.D. Thesis, Kiel University, 2019,
{\tt https:\/\/d-nb.info/1202630561/34}.



\bibitem[St]{Stein-LP}
E. M. Stein,
{Topics in harmonic analysis related to the Littlewood-Paley theory}, Annals of Mathematics Studies,
No. 63 Princeton University Press, Princeton, N.J..    




\bibitem[U]{Urbina}
W. Urbina-Romero,
{\emph {Gaussian Harmonic Analysis}}, Springer Monographs in Mathematics, 2019.



\bibitem[W1]{Wr1}
B. Wr\'obel, Multivariate spectral multipliers for systems of Ornstein-Uhlenbeck operators,
{\em Studia
Math.} (2013), \textbf{216}, 47--67.

\bibitem[W2]{Wr2}
B. Wr\'obel,
Joint Spectral Multipliers for Mixed Systems of Operators,
{\em J.Fourier Anal. Appl.} (2017), \textbf{23}, 245--287.


\bibitem[W3]{Wr3}
B. Wr\'obel,
Laplace Type Multipliers for Laguerre Expansions of Hermite Type,
{\em Mediterr. J. Math.}  {\textbf{10}}, (2013), 1867--1881.


\end{thebibliography}
\end{document}

RIFIUTI

We claim that $ m_{0}(T)^{{\rm{glob}}}$ is nonlocal in the following strong sense. If for any $x_0$, the support of $f$ is contained in the ball
\begin{equation*}
B_{x_0} = \{x: |x-x_0|_Q \le  1/(5(1+|x_0|_Q) \},
 \end{equation*}
 then
$  m_{0}(T)^{{\rm{glob}}} f=0$ a.e. in $B_{x_0}$.

Indeed,  \eqref{width} implies that $B_{x_0}$  can intersect at most two rings, say  $R_j$ and  $R_{j+1}$.
When $u\in B_{x_0}$ the function $r_i(u)$ can then be nonzero only for $i\in\{j-1,j,j+1\}$, and so
$r_{j-1}(u)+r_j(u)+r_{j+1}(u)=1$.
Also, for $x\in B_{x_0} \subseteq R_j \cup R_{j+1}$ one has ${\tilde r}_{j-1} (x)={\tilde r}_{j} (x)={\tilde r}_{j+1} (x) = 1$.
This implies that for $x,u\in B_{x_0}$
\begin{equation}\label{rj}
\eta(x,u)=\sum_{j=0}^\infty \tilde r_j (x)\, r_j(u)
=1.
\end{equation}
If $f$ is supported on $B_0$ and $x$ is an interior point of  $B_{x_0}$, then
\begin{align*}
m_{0}(T)^{{\rm{loc}}} f(x)
& =
 \sum_{i=0}^\infty \tilde r_i (x)
  m_{0}(T)\big(f r_i\big)(x)
   =
 \sum_{i=j-1}^{j+1}
 \tilde r_i (x)
  m_{0}(T)\big(f r_i\big)(x)
  \\&= \sum_{i=j-1}^{j+1}
  m_{0}(T)\big(f r_i\big)(x)
=m_{0}(T) f(x),
\end{align*}
and
$ m_{0}(T)^{{\rm{glob}}} f(x)=0$. This proves the claim.

For any $f$ and any $B_0$, for $x\in B_0$ we then get
\begin{equation*}
  m_{0}(T)^{{\rm{glob}}}f(x)=
  m_{0}(T)^{{\rm{glob}}} \big( f(1-\chi_{B_0})\Big)(x)
  . \end{equation*}
  But $x\notin\text{\, supp\,} f(1-\chi_{B_0})$, so that
\begin{align*}
  m_{0}&(T)^{{\rm{glob}}}f(x)=
  m_{0}(T)^{{\rm{glob}}} \big( f(1-\chi_{B_0})\big)(x)
  \\
  &= \int \mathcal M_{0}(x,u) \,
(1- \eta (x,u))\,f(u)\, (1-\chi_{B_0}(u))
 d\gamma_\infty (u)
 \\
   &= \int \mathcal M_{0}(x,u) \,
(1- \eta (x,u))\,f(u)\,
 d\gamma_\infty (u) \\
&\qquad -  \int \mathcal M_{0}(x,u) \,
(1- \eta (x,u))\,f(u)\, \chi_{B_0}(u)
 d\gamma_\infty (u)
 \end{align*}
  The latter term vanishes because of \eqref{rj}.
  Given $f$ and any $x$, we can choose $B_0$ containing $x$ and conclude
that $ m_{0}(T)^{{\rm{glob}}}f(x)$ is always given by integrating $f$ against the kernel
$ \mathcal M_{0}(x,u) \,
(1- \eta (x,u))$.

\bibitem[S]{Sasso}
E. Sasso,
Functional calculus for the Laguerre operator,
{\em
Math. Z.} (2005), \textbf{249}, 683--711.

begin{proof}
{\Blue{
 We consider the operator $\tilde m$ obtained integrating  against Lebesgue measure the kernel
$  \widetilde M (x,u)  :=e^{-R(x)} \,\mathcal M_{0}^{\rm{loc}}(x,u) $.

We will verify that $\tilde m$ is of weak type $(1,1)$ with respect to Lebesgue measure $d\lambda$.
In fact Proposition~\ref{lemma-Calderon} means that  the off-diagonal kernel $  \widetilde M $ 
satisfies standard
 Calder\'on-Zygmund bounds.
  Thus  it is enough to verify that $\tilde m$ is bounded on  $L^2$. 

  For $m_{0}(T)$, which is of Laplace type, the $L^2$ boundedness with respect to the invariant measure
follows from  \cite[Lemma~3.7]{Carbonaro-Oliver}
(we remark that this boundedness  also follows from some results in \cite{CFMP1} and \cite{CFMP2}, which can be applied here
since  \cite[Lemma 2.2]{MPRS} exhibits a linear change of coordinates in $\R^n$  reducing the setting to the case where  $Q = I$ and $Q_\infty$  is a diagonal matrix).
\footnote{{Peter, tu a questo punto avevi scritto
\Blue{"Thus $m_0(T)$ is bounded on $L^2 (d\lambda)$ as well."}}
Non sappiamo dimostrarlo per il momento con il Theorem 3.7 di \cite{GCMST}, perché la parte locale di $m_0(T)$ non soddisfa le stime di CZ.

Proviamo quindi a procedere in un altro modo, applicando il  Theorem 3.7 di \cite{GCMST} a $\tilde m$.}
Thus the operator with kernel $e^{-R(x)} \,\mathcal M_{0}(x,u)$  is bounded on $L^2 (\gamma_\infty)$ as well, since\footnote{Da togliere, dopo.}
\begin{align*}
\int&\Big|\int e^{-R(x)} \,\mathcal M_{0}(x,u) f(u)d\gamma_\infty (u) \Big|^2
d\gamma_\infty(x)\\
&\le
\int\Big(\int e^{-R(x)} \,\mathcal M_{0}(x,u)| f(u)|d\gamma_\infty (u) \Big)^2 d\gamma_\infty(x)
\\&\le
\int\Big(\int \mathcal M_{0}(x,u)| f(u)|\, d\gamma_\infty (u) \Big)^2 d\gamma_\infty(x)\\
&=
\|m_{0}(T) |f|\|^2_{ L^2 (\gamma_\infty) }\lesssim \| f\|^2_{ L^2 (\gamma_\infty) }.
\end{align*}
Then,
as a consequence of Theorem 3.7 in \cite{GCMST}, $\tilde m$ is of weak type $(1,1)$
 with respect to Lebesgue measure $d\lambda$ (and also with respect to the invariant measure).

\vskip1cm

{\it Qui sotto leggi  come era prima, non ho modificato. La parte nuova è quella sopra in rosso. Da qui in poi devo ancora modificare. }

Proposition~\ref{lemma-Calderon} means that
 the off-diagonal kernel $\mathcal M_{0}^{\rm{loc}}(x,u) $
 of  $m_{0}(T)^{{\rm{loc}}}$ satisfies standard
 Calder\'on-Zygmund bounds.
  Thus  it is enough to verify that $m_{0}(T)^{{\rm{loc}}}$ is bounded on  $L^2(\gamma_\infty)$. For $m_{0}(T)$, which is of Laplace type, the $L^2$ boundedness
follows from  \cite[Lemma~3.7]{Carbonaro-Oliver}.
We remark that this boundedness  also follows from some results in \cite{CFMP1} and \cite{CFMP2}, which can be applied here
since  \cite[Lemma 2.2]{MPRS} exhibits a linear change of coordinates in $\R^n$  reducing the setting to the case where  $Q = I$ and $Q_\infty$  is a diagonal matrix.

\medskip

To go from $m_{0}(T)$ to  $m_{0}(T)^{{\rm{loc}}}$, we need the following
 lemma which gives a partition of  $\R^n$   into  cubes of local  size. It is a rather standard construction;
 cf.\ \cite[Lemma 2.4]{GCMSTRiesz} or \cite[Lemma 4]{Sasso}.
\begin{lemma}\label{partition}
   One can cover $\R^n$ by a sequence of cubes $Q_j$ which are pairwise disjoint except for boundaries and have the properties stated below. We let $c_j$ be the center of  $Q_j$, and
   $2Q_j$ denotes the concentric cube scaled by a factor 2.
   \begin{enumerate}
  \item  For each $j$
  \begin{equation*}
 \frac{1}{16(1+|c_j|)}  <   \mathrm{diam}\,Q_j \le \frac{1}{4(1+|c_j|)}
  \end{equation*}
  \item   For any
   $A>0$, the balls $B(c_j, A/(1+|c_j|))$ have bounded overlap, with a bound that depends only on $A$ and $n$.
  \item
   If  $x \in 2Q_j$ and $u \in Q_j$, then $|x-u| \le 1/(1+|x|)$ and thus $\eta(x,u) =1$.
   \item
   If $u \in Q_j$ and  $x \notin B(c_j, 45/(1+|c_j|))$, then $|x-u| \ge 2/(1+|x|)$ and thus $\eta(x,u) = 0$.
   \end{enumerate}
 \end{lemma}

 Before proving this lemma, we use it to finish the proof of Proposition \ref{propo-locale}, by
 deducing the $L^2$ boundedness of $m_{0}(T)^{{\rm{loc}}}$ from that of
 $m_{0}(T)$.

 We split a given function $f \in L^2(\gamma_\infty)$ as $f = \sum f_j$ with $f_j = f\,\chi_{Q_j}$.
 Item {\it{(3)}} of the lemma shows that $m_{0}(T)^{{\rm{glob}}}f_j = 0$ in   $2Q_j$, so that
 $m_{0}(T)^{{\rm{loc}}}f_j = m_{0}(T)f_j$ in   $2Q_j$. The weak type bound for  $m_{0}(T)$ thus implies that
 \begin{equation}\label{weakloc}
   \|\chi_{2Q_j}\,m_{0}(T)^{{\rm{loc}}}f_j\|_{1,\infty} \lesssim \|f_j\|_1,
 \end{equation}
 where we refer to the measure $d\gamma_\infty$.

 Item {\it{(4)}} of the lemma shows that $m_{0}(T)^{{\rm{loc}}}f_j$ is supported in  $B(c_j, 45/(1+|c_j|))$. But if $x \in B\left(c_j, 45/(1+|c_j|)\right) \setminus 2Q_j$ and $u \in Q_j$, then
 $|u-x| \simeq 1/(1+|c_j|) $, and
 \eqref{stima-Mphiloc} says that $\big| \mathcal M_{0}^{\rm{loc}} (x,u)\big| \lesssim  e^{R(x)}\,(1+|c_j|)^{n}$. We conclude that $\left|m_{0}(T)^{{\rm{loc}}}  f_j(x)\right|$ is no larger than const.$e^{R(x)}$ times the mean value of the function $\left|f_j(u)\,e^{-R(u)}\right|$ in $Q_j$. Since the density of the invariant measure is essentially constant in  $ B(c_j, 45/(1+|c_j|))$, this easily implies that the restriction of $m_{0}(T)^{{\rm{loc}}} $ is bounded from $L^1(Q_j)$ into
 $L^{1,\infty}(B\left(c_j, 45/(1+|c_j|)\right)) \setminus 2Q_j$, with respect to the measure
  $d\gamma_\infty$.
 Thus we can suppress the factor $\chi_{2Q_j}$  in \eqref{weakloc}.

The bounded overlap in Lemma \ref{partition}{\it{(2)}} will now allow us to add the functions $m_{0}(T)^{{\rm{loc}}}  f_j(x)$ in $L^{1,\infty}(\gamma_\infty)$
 and control the quasinorm of the sum in terms of the norm of $f = \sum f_j$ in $L^{1}(\gamma_\infty)$.
The proposition is proved, modulo  Lemma~\ref{partition}.
\end{proof}

\noindent \textit{Proof of Lemma  \ref{partition}}   We start with the lattice of unit cubes with vertices in $\Z^{n}$. These cubes are repeatedly split  into $2^n$ subcubes in the obvious way. This splitting is continued as long as the cubes satisfy
$\mathrm{diam}\,Q > {1}/[4(1+|c_Q|)]$, where $c_Q$ is the center of the cube $Q$. When we arrive at a cube for which $\mathrm{diam}\,Q \le {1}/[4(1+|c_Q|)]$, this cube will be selected as one of the cubes $Q_j$ in the sequence to be constructed, and it is not split any further. It is easy to see that this leads to a sequence  $(Q_j)_j$ giving a partition of  $\R^n$, and the right-hand inequality in item {\it{(1)}} will be satisfied.

To verify the left-hand estimate of  item {\it{(1)}}, assume that the selected cube $Q_j$ arose from  splitting  the cube $Q'$ with center $c'$.
Then
\begin{equation*}
  |c_j-c'| = \frac12 \,\mathrm{diam}\,Q_j \le  \frac{1}{8(1+|c_j|)} \le  \frac18 \le \frac{1}{8\,\mathrm{diam}\,Q'},
\end{equation*}
the last step since $\mathrm{diam}\,Q' = 2\,\mathrm{diam}\,Q_j < 1$.
It follows that
\begin{equation*}
  1+|c_j| \ge 1+|c'|-  |c_j-c'| >  \frac1{4\,\mathrm{diam}\,Q'} -   \frac{1}{8\,\mathrm{diam}\,Q'}
  =  \frac{1}{16\,\mathrm{diam}\,Q_j},
 \end{equation*}
and item {\it{(1)}} is proved.

We move to the bounded overlap in item {\it{(2)}}. Fix $x \in \R^n$,
and assume that the ball
$ B(c_j, A/(1+|c_j|))$ contains $x$.
Then
\begin{equation*}
 1+|x| \le  1+|c_j| + |x-c_j| \le  1+|c_j| + \frac{A}{1+|c_j|} \le  (1+A) (1+|c_j|),
 \end{equation*}
so that $1/(1+|c_j|) \le (A+1)/(1+|x|)$. Swapping  $x$ and $c_j$ in this argument, we also get
$1/(1+|x|) \le (A+1)/(1+|c_j|)$. In view of item {\it{(1)}}  then
\begin{equation*}
Q_j \subset B(x, |x-c_j| + \mathrm{diam}\,Q_j) \subset  B\left(x,\frac{A+1}{1+|c_j|}\right) \subset
 B\left(x,\frac{(A+1)^2}{1+|x|}\right).
  \end{equation*}

  Again because of item {\it{(1)}}, the volume of $Q_j$ satisfies
  \begin{equation*}
|Q_j|  \gtrsim \frac{1}{(1+|c_j|)^n} \gtrsim \frac{1}{(A+1)^n(1+|x|)^n},
  \end{equation*}
with implicit constants depending only on the dimension. Since the $Q_j$ are pairwise disjoint, comparison of volumes gives a bound on the number of possible $Q_j$ here, and item  {\it{(2)}}  follows.

 Let now $x$ and $u$ be as in item {\it{(3)}}. Then $1+|x| \le 1+|c_j| + |x-c_j| \le 1+|c_j| + \mathrm{diam}\,Q_j \le 2(1+|c_j|)$, and so
 \begin{equation*}
 |x-u| \le |x-c_j| + |u-c_j| \le \mathrm{diam}\,Q_j + \frac12\,\mathrm{diam}\,Q_j \le \frac{3}{8\,(1+|c_j|)} <  \frac{1}{1+|x|}.
 \end{equation*}
 This implies item {\it{(3)}}, since here $\eta(x,u) = 1$   by the definition of  $\eta$.

 Let finally  $x$ and $u$ be as in item {\it{(4)}}. Assume first that $|x| \ge |c_j|/2$. Then $1+|c_j| \le 2(1+|x|)$, and
 \begin{equation*}
 |x-u| \ge |x-c_j| - |u-c_j| \ge \frac{45}{1+|c_j|}  -  \frac{1}{4\,(1+|c_j|)} \ge  \frac{2}{1+|x|}.
 \end{equation*}

  Assume next that $|x| < |c_j|/2$. Then $45/(1+|c_j|) < |x-c_j|\le 3|c_j|/2$, so that $|c_j|+|c_j|^2 > 30 $ and $|c_j| > 5 $. We get
 \begin{equation*}
 |x-u| \ge |x-c_j|- |u-c_j|  > \frac {|c_j|}2  - \frac{1}{4\,(1+|c_j|)} > 2 \ge  \frac{2}{1+|x|},
  \end{equation*}
  and item {\it{(4)}} is verified.   \hfill  $\Box$

\bibitem[CiMP4]{CMP4}
{\Blue{    A. Cianchi, V. Musil and L. Pick,
Optimal Sobolev embeddings for the Ornstein-Uhlenbeck operator,
{\em Journal of Differential Equations},
\textbf{359},
(2023),
414--475.}}
{\Blue{In this paper we are not concerned with holomorphic H\"ormander-type functional calculus;\footnote{
{\Blue{
Mi è venuto il dubbio che sia meglio specificare che non ci occupiamo di calcolo funzionale di H\"ormander.  Ho lasciato il titolo Spectral multipliers, perché mi sembra che Functional Calculus sia usato soprattutto in riferimento a H\"ormander Functional Calculus.} }}
 for results in that context
we  refer to \cite{GCMMT} and to the recent works  \cite[Section 5.3]{P} and  \cite[Section 10]{HP}, where some   results are proved for  the symmetric Ornstein--Uhlenbeck semigroup. Being the literature vaste, for a good bibliography about this subject we refer to   \cite{P}.}}
several  issues related to Ornstein–Uhlenbeck  semigroup, which we do not discuss, like e.g. functional calculus, properties of maximal operators, Riesz transforms, and other issues of harmonic analysis. Our presentation is limited to the topics closer to the interests of the semigroup community.
 {\Blue{
\bibitem[GMMST]{GCMMT}
J. Garc\'ia-Cuerva, G. Mauceri, S. Meda, P. Sj\"ogren and J. L. Torrea, Functional calculus
for the Ornstein-Uhlenbeck operator,
{\em J. Funct. Anal.}, \textbf{183}  (2001), 413--450.
}}

\begin{proof}
We may assume  $\eta(x,u)<1$, since otherwise  \eqref{nablaeta}
 is trivial. Thus,
by Lemma \ref{due}$(i)$ we have
$ |x-u|_Q > \frac1{2(1+|x|_Q)}$.

Moreover,  the gradient of $\eta$ may be different from zero only if $x\in R_{j-2}\cup R_{j+3}$ and $u\in R_j\cup R_{j+1}$.  Then by \eqref{nabla2}
\begin{align*}
\big|\nabla_x\,\eta (x,u)
  \big|&\lesssim   \frac1{1+|x|}\lesssim   \frac1{1+|x|_Q}
\lesssim |x-u|_Q,
\end{align*}
where we also used the fact that   $|x|_Q \simeq |x|$.
Then trivially
\begin{align*}
\big|\nabla_x\,\eta (x,u)
  \big|\lesssim  1+\sqrt{j} \lesssim |x-u|^{-1}_Q.
  \end{align*}

In the light of \eqref{nabla-r}
the proof for $\nabla_u\,\eta $
is analogous.
\end{proof}